\newcommand{\cf}{cf.\ } 
\newcommand{\page}{p.\ } 
\newcommand{\ppage}{pp.\ } 
\newcommand{\resp}{resp.\ }
\newcommand{\ie}{i.e.\ } 
\newcommand{\eg}{e.g.\ } 
\newcommand{\iid}{i.i.d.\ } 
\newcommand{\wrt}{w.r.t.\ }
\newcommand{\RR}{\ensuremath{\mathbb{R}}}
\newcommand{\NN}{\ensuremath{\mathbb{N}}}
\newcommand{\PP}{\ensuremath{\mathbb{P}}}
\newcommand{\EE}{\ensuremath{\mathbb{E}}}
\newcommand{\FF}{\ensuremath{\mathbb{F}}}
\DeclareMathOperator{\Var}{Var}
\newcommand{\trans}{\ensuremath{\text{\normalfont T}}}
\newcommand{\D}{\ensuremath{\text{\normalfont d}}}
\newcommand{\erf}{\ensuremath{\text{\normalfont erf}}}
\newcommand{\erfc}{\ensuremath{\text{\normalfont erfc}}}
\newcommand{\BR}{\ensuremath{\text{\normalfont BR}}}
\newcommand{\VBR}{\ensuremath{\text{\normalfont VBR}}}
\newcommand{\EG}{\ensuremath{\text{\normalfont EG}}}
\newcommand{\EBG}{\ensuremath{\text{\normalfont EBG}}}
\newcommand{\MMM}{\ensuremath{\text{\normalfont M3}}}
\newcommand{\MMMr}{\ensuremath{\text{\normalfont M3r}}}
\newcommand{\MMr}{\ensuremath{\text{\normalfont M2r}}}
\newcommand{\MMMb}{\ensuremath{\text{\normalfont M3b}}}
\newcommand{\MPS}{\ensuremath{\text{\normalfont MPS}}}
\newcommand{\tb}{\ensuremath{\text{\normalfont TB}}}
\newcommand{\orth}{\ensuremath{\text{\normalfont O}}}
\newcommand{\Borel}{\ensuremath{{\mathcal B}}}
\newcommand{\vol}{\ensuremath{\nu}}
\newcommand{\eins}{\ensuremath{\mathbf{1}}}
\newcommand{\Eins}{\ensuremath{\mathbbm{1}}}
\newcommand{\distr}{\ensuremath{\mathcal{D}}}
\newcommand{\laplacetransform}{\ensuremath{\mathcal{L}}}
\newcommand{\pmid}{\ensuremath{\,:\,}}
\newcommand{\interior}[1]{\accentset{\circ}{#1}}
\newcommand{\Max}{\ensuremath{\bigvee}}
\newcommand{\pintensity}{\beta} 
\theoremstyle{plain}
\newtheorem{theorem}{Theorem}
\newtheorem{proposition}[theorem]{Proposition}
\newtheorem{lemma}[theorem]{Lemma}
\theoremstyle{definition}
\newtheorem{example}[theorem]{Example}
\theoremstyle{remark}
\newtheorem{remark}[theorem]{Remark}
\begin{document}

\title{Systematic co-occurrence of tail correlation functions \\ among max-stable processes} 

\author{
  Kirstin Strokorb\footnote{Institute of Mathematics, University of Mannheim, D-68131 Mannheim, Email: strokorb@math.uni-mannheim.de},
  Felix Ballani\footnote{Institute of Stochastics, Faculty of Mathematics and Computer Science, TU Bergakademie Freiberg, D-09596 Freiberg, Email: ballani@math.tu-freiberg.de},
  Martin Schlather\footnote{Institute of Mathematics, University of Mannheim, D-68131 Mannheim, Email: schlather@math.uni-mannheim.de}
}

\maketitle 

\begin{abstract}
The tail correlation function (TCF) is one of the most popular bivariate extremal dependence measures that has entered the literature under various names. We study to what extent the TCF can distinguish between different classes of well-known max-stable processes and identify essentially different processes sharing the same TCF. 
\end{abstract}

{\small
\noindent \textit{Keywords}: {Brown-Resnick process, completely monotone, extremal coefficient, Extremal Gaussian process, max-stable process, Mixed Moving Maxima, Poisson storm process, positive definite, tail correlation function, tail dependence coefficient}\\ 
\noindent \textit{2010 MSC}: {Primary 60G70;} \\ 
\phantom{\textit{2010 MSC}:} {Secondary 60G60}
}


\section{Introduction}\label{sec:intro}

The \emph{tail correlation function (TCF)} $\chi$ of a stationary
process $X$ on $\RR^d$ is defined through 
\begin{align*}
\chi(t):=\lim_{\tau \uparrow \tau_0} \PP(X_t \geq \tau \mid X_o \geq \tau), \qquad t \in \RR^d,
\end{align*}
provided the limit exists.
Here, $\tau_0$ is the upper endpoint of the univariate marginal distribution and $\chi$ does not depend on the choice of one-dimensional marginals. 
Dating back to \cite{geffroy_5859,sibuya_60,tiagodeoliveira_62} the TCF is one of the most popular bivariate extremal dependence measures that has entered the literature under various names, most prominently \emph{(upper) tail dependence coefficient} \cite{beirlantetalii_03,davismikosch_09,falk_05}, \emph{$\chi$-measure} \cite{beirlantetalii_03,colesheffernantawn_99} or \emph{extremal coefficient function} \cite{fasenetalii_10}, since the value $\theta(t) = 2-\chi(t)$ is called the \emph{extremal coefficient} for $t\in\RR^d$. 
As our choice for the name suggests, the tail correlation function $\chi$ is a symmetric positive definite function. It was proposed as an extreme value analogue to the correlation function \cite{schlathertawn_03} and is generally considered an appropriate summary statistic for extremal behaviour of stationary processes, {\cf \cite{davismikosch_09,colesheffernantawn_99,beirlantetalii_03,falk_05,fasenetalii_10} among many others}.
Since $\eta:=1-\chi$ satisfies the triangle inequality $\eta(s \pm t) \leq \eta(s) + \eta(t)$, the TCF $\chi$ cannot be differentiable except when $\chi$ is constant \cite{markov_95,schlathertawn_03}.
Estimators can be found for instance in \cite{smith_90} (raw estimates) or \cite{schlathertawn_03,cooleyetalii_06,naveauetalii_09}.

Here, we explore for the first time to what extent the TCF can
distinguish between different classes of max-stable processes. In fact, we identify practically relevant, but essentially different, stationary max-stable processes on $\RR^d$ sharing the same TCF (Section~\ref{sec:tcf}).  The focus lies on stationary max-stable processes, and particular emphasis is put on radially symmetric TCFs that are monotonously decreasing as the radius grows.

The text is structured as follows: 
After the introductory Section~\ref{sec:intro}, where some notation is fixed, Section~\ref{sec:maxstable} gives an overview over well-known classes of stationary max-stable processes. The main contribution is  Section~\ref{sec:tcf}, where we compare the TCFs of these classes and identify systematic co-occurrences. Section~\ref{sec:opscounter} complements Section~\ref{sec:tcf} in that it provides counterexamples of TCFs that cannot arise from certain classes of processes. Thereby, we transfer two well-known operations from Geostatistics to the class of TCFs.
The text closes with a short Section~\ref{sec:parametric} on parametric families of TCFs with sharp parameter bounds for being a TCF. All proofs are postponed to Section~\ref{sec:proofs}.

\paragraph{Some notation}  By $a \wedge b$ we denote the minimum between two quantities $a$ and $b$, whereas $\Max_{i \in I} a_i$ is the supremum over the $a_i$.  The function $\eins_{A}$ is the indicator function of $A$. The expression $\nu_d$ stands for the Lebesgue measure on the Borel $\sigma$-algebra $\Borel^d$ of $\RR^d$ and $\lVert \cdot \rVert$ is the Euclidean norm on $\RR^d$.
We denote 
\begin{align*}B^d_{r} :=\{h \in \RR^d \pmid \lVert h \rVert \leq r \}\end{align*}
the $d$-dimensional ball of radius $r$ centred at the origin $o \in \RR^d$.
The constant 
\begin{align*}\kappa_d:=\vol_d(B^d_{1})=\pi^{d/2}/\Gamma(1+d/2)\end{align*} 
is the volume of the $d$-dimensional unit ball. When a function on $\RR^d$ depends on the radius (Euclidean norm on $\RR^d$) only, we will usually  treat it  as a function on $[0,\infty)$. The expression cdf abbreviates ``cumulative distribution function''. When we treat a cdf $G$ on $(0,\infty)$, it is always meant that $G(0+)=0$. Usually $G(0+)=c \in [0,1]$ also yields admissible models, but will lead to a mixture with trivial components.
The function  
\begin{align*}
\erfc(x)=\frac{2}{\sqrt{\pi}}\int_{x}^\infty e^{-y^2} \,\D y
\end{align*} 
is the complementary error function. We write $\erf(x):=1-\erfc(x)$ for the error function.


\section{Max-stable processes}\label{sec:maxstable}

A stochastic process $X=\{X_t\}_{t \in \RR^d}$ on $\RR^d$ is called \emph{max-stable} if all its finite-dimensional distributions are max-stable, that is, for each $m,n \in \NN$, $t_1,\dots,t_m \in \RR^d$ and $n$ independent copies $(Y^{(i)})_{i=1}^n$  of the random vector $Y:=(X_{t_1},\dots,X_{t_m})$ we have 
\begin{align*}
  \Max_{i=1}^n Y^{(i)} \stackrel{\distr}{=} a_n Y + b_n
\end{align*}
for suitable norming sequences $(a_n)_{n \in \NN}$ and $(b_n)_{n \in \NN} $ with values in $\RR^m$ and $a_n > 0$. All operations here are meant componentwise, and $\stackrel{\distr}{=}$ means equality in distribution. 
In what follows, we will consider \emph{stationary} max-stable processes  on $\RR^d$.  Since non-degenerate one-dimensional marginal distributions will be considered only,
we henceforth restrict ourselves to \emph{standard Fr{\'e}chet marginals}, \ie $\PP(X_t \leq x)=e^{-1/x}$ for $t \in \RR^d$ and $x > 0$ (\cf \cite{resnick_08}), whereas plots of simulated processes will be always transformed to \emph{standard Gumbel marginals}, \ie $\PP(X_t \leq x)=e^{-e^{-x}}$ for $t \in \RR^d$ and $x\in \RR$.  
It has been shown (\cf \cite{kabluchko_09,dehaan_84,stoev_08}) that max-stable processes that are separable in probability allow for a \emph{spectral representation} of the following form 
\begin{align}\label{eqn:spectralrep}
\{X_t\}_{t \in \RR^d} \stackrel{\distr}{=} \left\{ \Max_{n=1}^\infty U_n V_t(\omega_n) \right\}_{t \in \RR^d}.
\end{align}
Here, $(U_n,\omega_n)$ denotes an (enumerated) Poisson point process on $\RR_+ \times \Omega$ with intensity $u^{-2} \D u \times \nu(\D \omega)$ for some measure space $(\Omega, {\mathcal A}, \nu)$, and  $V_t:\Omega \rightarrow \RR_+$ is measurable  with $\int_{\Omega} V_t(\omega) \nu(\D \omega) = 1$ for each $t \in \RR^d$. The functions  $\{V_t\}_{t \in \RR^d}$  are called \emph{spectral functions}. 
Of course, any process $X$ of the form (\ref{eqn:spectralrep}) is max-stable and has standard Fr{\'e}chet marginals.
In terms of a spectral representation the finite-dimensional distributions of $X$ are given through
\begin{align}\label{eqn:fddspectral}
-\log \PP(X(t_k)\leq x_k;\, k=1,\dots,m) = \int_{\Omega}  \Max_{k=1}^m  \frac{V_{t_k}(\omega)}{x_k} \,\nu(\D \omega)
\end{align}
and the TCF $\chi$ of the max-stable process $X$ may be expressed as
\begin{align}\label{eqn:chispectral}
\chi(t):=\int_\Omega V_t(\omega) \wedge V_o(\omega) \,\nu(\D \omega).
\end{align}
If the measure space $(\Omega, {\cal A}, \nu)$ is a probability space, the spectral functions $\{V_t\}_{t \in \RR^d}$ themselves form a stochastic process on $\RR^d$, which we will call  \emph{spectral process}. It is convenient then to interpret the expression $\{V(\omega_n)\}_{n=1}^\infty$ in the spectral representation (\ref{eqn:spectralrep})  as \iid sequence $V^{(n)}$ of a process $V=\{V_t\}_{t \in \RR^d}$ on $\RR^d$ that is independent of the Poisson point process $\{U_n\}_{n=1}^\infty$ on $\RR_+$.


\subsection{Examples of stationary max-stable processes}\label{sec:examplesMSTprocesses}
The following processes $X$ on $\RR^d$ are stationary and max-stable. They have either been proposed in previous literature or constitute modifications or extensions of those. 
Note that the stationarity of the spectral process $V$ is a sufficient but not a necessary condition for $X$ being stationary (\cf \cite{kabluchkoetalii_09,molchanovstucki_13}).

\paragraph{(Mixed) Moving Maxima (M3/M2) and subclasses (M3r, M2r and M3b)} 
Slightly different notions are given in the literature, \cf \cite{kabluchkostoev_12, schlather_02, stoev_08, stoevtaqqu_06, smith_90}, for example. We consider the following normalized version: Let $\{f(t)\}_{t \in \RR^d}$ be a measurable process on $\RR^d$ with values in $[0,\infty]$, such that 
\begin{align}\label{eqn:Lonecondition}
\EE_f \left( \int_{\RR^d} f(t) \D t \right) = 1.
\end{align} 
As in \cite{oestingschlather_12,engelkeetalii_12} we refer to  $f \in \FF$ as \emph{(random) shape function} or \emph{(random) storm}. Further, we consider the measure space 
\begin{align*}
(\Omega,{\mathcal A},\nu)=(\RR^d\times \FF, \Borel^d \otimes {\mathcal F}, \nu_d \times \PP_f),
\end{align*} 
where 
$(\FF, {\mathcal F},\PP_f)$ stands for the law of the random shape function $f$.
Then we call the process $X$ with spectral representation (\ref{eqn:spectralrep}) given by $(\Omega,{\mathcal A},\nu)$ and spectral functions
\begin{align*}
  V_t\left((z,f)\right)&:=f(t-z), \qquad (z,f)\in \RR^d \times \FF, \qquad t \in \RR^d,
\end{align*}
\emph{Mixed Moving Maxima process (M3 process)}, or \emph{Moving Maxima process (M2 process)} if $f$ is deterministic, respectively.

We put particular emphasis on such random storms, where each realization of a random shape $f\geq 0$ is \emph{radially symmetric} around the origin $o \in \RR^d$ and \emph{non-increasing} as the radius grows, and refer to this class as \emph{M3r processes}, or \emph{M2r processes} if $f$ is deterministic, respectively. Moreover, we will also consider the subclass of \emph{M3b processes} where the M3 process has as shape functions only \emph{normalized indicator functions of balls $B^d_R$}, \ie  
\begin{align*}
f(t)=  \frac{\eins_{B^d_R}(t)}{\vol_d(B^d_R)} = \frac{\eins_{B^d_R}(t)}{\kappa_d R^d}  
\end{align*}
with a random radius $R \in (0,\infty)$. Clearly, M3r, M2r and M3b processes are stationary and isotropic.

\paragraph{Mixed Poisson storm processes (MPS)}
Here, we consider a mixed version of the Poisson storm process introduced in \cite{lantuejoulbacrobel_11}. 
Before we define the process, let us make some preliminary considerations {(with terminology from stochastic geometry based on \cite{schneiderweil_08}).}
If $C$ is the \emph{typical cell} of a stationary isotropic \emph{Poisson hyperplane mosaic} of \emph{intensity}~$1$ 
and $\pintensity > 0$, then $\pintensity^{-1}  C =\{x : \pintensity x \in C \}$ is distributed like the typical cell corresponding to the intensity $\pintensity$ and has expected volume 
\begin{align}
\EE\left(\nu_d\left({\pintensity}^{-1} C \right)\right) = \frac{d^d \kappa_d^{d-1}}{\kappa_{d-1}^d \pintensity^d}  =: \frac{1}{\mu_d(\pintensity)}
\end{align}
(\cf \cite[(10.4) and (10.4.6)]{schneiderweil_08}). Note that our notion of intensity $\pintensity$ is based on \cite[\ppage 497 and \page 126]{schneiderweil_08} and corresponds to the choice $\lambda=\pintensity \kappa_{d-1}/ (d\kappa_d)$ with $\lambda$ as in \cite[\page 420]{lantuejoulbacrobel_11}.

Now, let $\pintensity \in (0,\infty)$ be a random variable distributed according to a distribution function $F$ on $(0,\infty)$ (with $F(0+)=0$). Let $C$ be the typical cell of a stationary isotropic Poisson hyperplane mosaic of intensity $1$ that is independent of $\pintensity$ and set 
\begin{align}\label{eqn:Poissonstorm}
  f(t):= \mu_d(\pintensity) \, \eins_{{\pintensity}^{-1}C }(t),  \qquad t \in \RR^d.
\end{align}
Conditioning on $\pintensity$, one sees that, indeed, $f$ satisfies (\ref{eqn:Lonecondition}) and, thus, defines an M3 process $X$ with standard Fr{\'e}chet marginals, which is stationary and isotropic. We call this process \emph{Mixed Poisson storm process (MPS process)} with \emph{intensity mixing distribution}~$F$.

\paragraph{(Variance-mixed) Brown-Resnick processes (BR and VBR)}
Let $\{W_t\}_{t \in \RR^d}$ be a Gaussian process with {stationary increments} (meaning that the law of $\{W_{t+h}-W_h\}_{t \in \RR^d}$ does not depend on $h \in \RR^d$) and variance $\sigma^2(t)=\Var(W_t)$.
Then we call the process $X$ defined through the spectral process
\begin{align*}
  V_t=\exp\left({W_t-\frac{\sigma^2(t)}{2}}\right),  \qquad t \in \RR^d,
\end{align*}
\emph{Brown-Resnick process (BR process)}. 
The process $X$ is stationary and its law depends on the variogram $\gamma(t)=\EE(W_t-W_o)^2$ only  \cite[Theorem~2]{kabluchkoetalii_09}.     

We will also consider a mixture of BR processes with respect to the variance of the involved Gaussian process. 
As in the construction of BR processes let $\{W_t\}_{t \in \RR^d}$ be a  Gaussian process with {stationary increments} and variance $\sigma^2(t)$. Additionally, let  $S$ be an independent random variable on $(0,\infty)$ with distribution function $G$ (with $G(0+)=0$). Then we call the process $X$ with spectral process
\begin{align*}
  V_t=  \exp\left(S W_t -\frac{S^2 \sigma^2(t)}{2} \right),  \qquad t \in \RR^d,
\end{align*}
\emph{variance-mixed Brown-Resnick process} with \emph{variance mixing distribution} $G$.
The law of $X$ is also stationary and it depends on the variogram $\gamma(t)=\EE(W_t-W_o)^2$ and the distribution function $G$ only.

\begin{remark} 
A similar construction can be found in \cite{engelkekabluchko_12}, where the BR process is mixed in its scale instead, \ie
\begin{align*}
  V_t=  \exp\left(W_{St} -\frac{\sigma^2(St)}{2} \right),  \qquad t \in \RR^d.
\end{align*}
This yields in fact the same class of processes in the most prominent example when $W_t$ is a fractal Brownian motion and, thus, self-similar, such that the law of $(W_{ct})_{t \in \RR^d}$ coincides with the law of $(\lvert c\rvert ^{\alpha/2} W_t)_{t \in \RR^d}$ for $c\neq 0$ and some $\alpha \in (0,2)$.
\end{remark}

\paragraph{Extremal Gaussian and extremal binary Gaussian processes (EG and EBG)}
Here, we relate to \cite[Theorem~2]{schlather_02}. Let $Z=\{Z_t\}_{t \in \RR^d}$ 
be a stationary Gaussian process whose marginals follow a standard normal distribution. The correlation function of $Z$ will be denoted by $\rho(t)$. 
Based on $Z$, we call the process $X$ defined through the spectral process
\begin{align*}
  V_t= \sqrt{2\pi} \cdot (Z_t)_+,   \qquad t \in \RR^d,
\end{align*}
\emph{extremal Gaussian process (EG process)} (where $z_+=\max(z,0)$).  
Secondly, we call the process $X$ with spectral process
\begin{align}\label{eqn:EBGspectral}
  V_t=2 \cdot \eins_{\left\{Z_t > 0\right\}},   \qquad t \in \RR^d,
\end{align}
\emph{extremal binary Gaussian process (EBG process).}

\section{Co-occurrence of tail correlation functions}\label{sec:tcf}

\begin{table}\small
\centering
\begin{tabular}{p{6mm} p{25mm}  p{26mm}  >{$\displaystyle}p{43mm}<{$} p{25mm}}
\toprule\\[-3mm]
\multicolumn{2}{l}{\textbf{Process model}} &  \textbf{Parameter} & \text{\textbf{TCF} $\chi(t)$ for $t \in \RR^d$} & \textbf{Reference}\\[1mm] 
\toprule\\[-3mm]
M3r & \multirow{1}{25mm}{M3 of radial non-increasing shapes}  & \multirow{1}{25mm}{non-increasing\\ random shape\\ $f\geq 0$ on $[0,\infty)$: \\ $\EE_f \rVert f\lVert_{L^1(\RR^d)} = 1$} &  \EE_f \int_{\RR^d} f(\lVert z \rVert) \wedge f(\lVert z-t \rVert) \,\D z  & Eqn.\ (\ref{eqn:chispectral})\\[14mm]
M2r & \multirow{1}{25mm}{M2 of radial non-increasing shapes}  & \multirow{1}{25mm}{non-increasing\\ determ. shape \\$f\geq 0$ on $[0,\infty)$: \\ $\rVert f\lVert_{L^1(\RR^d)} = 1$} &  \int_{\RR^d} f(\lVert z \rVert) \wedge f(\lVert z-t \rVert) \,\D z  & \\[14mm]
M3b & \multirow{1}{25mm}{M3 of ball indicators}  & \multirow{1}{25mm}{random radius\\ $R$ on $(0,\infty)$} &  \EE_R \int_{\RR^d}  \frac{\eins_{\lVert z\rVert \leq R} \wedge \eins_{\lVert z-t\rVert \leq R}}{\kappa_d R^d}\,\D z 
& \\[5mm]
MPS & \multirow{1}{25mm}{Mixed Poisson\\ Storm}  & \multirow{1}{25mm}{cdf $F$ on $(0,\infty)$ } &   \laplacetransform(F) \left( \frac{2\kappa_{d-1}}{d\kappa_{d}} \lVert t \rVert \right) & 
\multirow{1}{25mm}{\cite[Prop.\ 4]{lantuejoulbacrobel_11} \\ with $K=\{o,t\}$}
\\[5mm]
\midrule\\[-3mm]
BR & Brown-Resnick  & variogram $\gamma$ & \erfc\left(\sqrt{\gamma(t)/8}\right) & \cite[Rk.\ 25]{kabluchkoetalii_09} \\[5mm]
VBR & \multirow{1}{25mm}{Variance-mixed\\ Brown-Resnick} & \multirow{1}{25mm}{variogram $\gamma$,\\ cdf $G$ on $(0,\infty)$} & \multirow{1}{5cm}{$\displaystyle{\int_0^\infty\erfc\left(s \,\sqrt{\gamma(t)/8}\right) \,\D G(s)}$}
 & \\[8mm]
\midrule\\[-3mm]
EG & \multirow{1}{25mm}{extremal\\ Gaussian}  & correlation $\rho$ &  1 - \sqrt{(1-\rho(t))/2} & \cite[Eqn.\ (7)]{cooleyetalii_06} \\[5mm]
EBG & \multirow{1}{25mm}{extremal binary\\ Gaussian} & correlation $\rho$ &   {\pi}^{-1} \arcsin \rho(t) + 1/2 & 
\multirow{1}{27mm}{\cite[Eqn.\ (10.8.3)]{cramerleadbetter_67}\\ with $u=0$}
\\[5mm]
\bottomrule
\end{tabular}
\caption{Tail correlation functions $\chi(t)$ for $t \in \RR^d$ of stationary max-stable processes on $\RR^d$ from Section~\ref{sec:examplesMSTprocesses}. The process models are grouped according to different long-range dependence. 
Here $\erfc(x)=2 \pi^{-1/2}\int_{x}^\infty e^{-y^2} dy$ 
denotes the complementary error function and 
$\laplacetransform(F)(x)=\int_0^\infty \exp(-xt) \D F(t)$ 
denotes the Laplace transform of the distribution function $F$. 
}\label{table:semiparameterTCFs}
\end{table}


The TCFs of the max-stable processes from Section~\ref{sec:examplesMSTprocesses} are listed in Table~\ref{table:semiparameterTCFs}.  The formulae are given in the indicated references or can easily be derived from the contributions therein.  
We want to explore to what extent the TCF can distinguish between these classes of processes. To this end we use the notation
\begin{align*}
T^d_{\text{\emph{model}}} := \left\{ \chi: \RR^d \rightarrow [0,1] \,\, \middle| \,\, \begin{array}{l} \chi \text{ TCF of a process $X$ on $\RR^d$} \\ \text{from the process class \emph{model}} \end{array} \right\}
\end{align*}
when referring to the set of TCFs of a certain class of processes on $\RR^d$.
For instance, $T^d_{\text{M3}}$ is the set of TCFs of M3 processes on $\RR^d$. By 
\begin{align*}
T^d := \left\{ \chi: \RR^d \rightarrow [0,1] \,\, \middle| \,\, \begin{array}{l} \chi \text{ TCF on $\RR^d$}  \end{array} \right\}
\end{align*}
we simply denote the set of \emph{all} TCFs on $\RR^d$. 
As a first observation we note that 
\begin{align*}
T^d_{\EG} \cap T^d_{\MMM} = \emptyset
\qquad \text{ and } \qquad 
T^d_{\EBG} \cap T^d_{\MMM} = \emptyset
\end{align*}
due to the different behaviour towards long-range dependence.
While M3 processes are shown to be {mixing} 
\cite{stoev_08,kabluchkostoev_12}, and
EG and EBG processes feature long-range dependence \cite{kabluchkostoev_12,wangroystoev_13},
BR processes may entail both behaviours depending on the variogram. If the variogram defining the BR process tends to $\infty$ fast enough, a BR process may even be representable as an M3 process   \cite[Theorem~14]{kabluchkoetalii_09}. 
The different ergodic behaviour is also reflected in the behaviour of the TCF $\chi(t)$ as $t$ tends to $\infty$ (\cf \cite{kabluchkoschlather_10,wangroystoev_13}). 
Accordingly, we will henceforth treat mixing processes and non-ergodic processes separately.


{
\paragraph{Absolute and complete monotonicity} The subsequent considerations rely on certain  monotonicity properties of functions. Therefore, we introduce the following notions in advance \cite[Chapter IV]{widder_46}. A real-valued function $f$ is  \emph{completely monotone} (\resp \emph{absolutely monotone}) on an interval $I$  if it has derivatives of all orders on the interior {$\interior{I}$} with $(-1)^k f^{(k)}(x) \geq 0$ (\resp $f^{(k)}(x) \geq 0$) for all $x \in {\interior{I}}$ and $k\in \NN \cup \{0\}$ and if additionally $f$ is continuous at the boundary points of $I$.  In the literature, the focus often lies on the intervals $I=(0,\infty)$ or $I=[0,\infty)$, since completely monotone functions on $[0,\infty)$ are precisely the continuous functions $f$ such that $f(\lVert \cdot \rVert^2)$ is positive definite on $\RR^d$ for all dimensions $d$. Such functions are characterized as Laplace transforms of non-decreasing functions or, equivalently, positive measures, \cf \cite[Theorem~12, Chapter IV]{widder_46} for example. 
}

\subsection{Mixing processes}\label{sec:M3_BR}

Here, we restrict ourselves to stationary and \emph{isotropic} processes on $\RR^d$ and focus on the subclass of BR and VBR processes that are associated to variograms that are radially symmetric around the origin $o \in \RR^d$ and grow  \emph{monotonously} to $\infty$ as the radius grows. Secondly, we involve the M3 processes from Table~\ref{table:semiparameterTCFs}. 
M2r processes and M3b processes each form a proper subclass of M3r processes. However, their TCFs even coincide in every dimension. We refer to Section~\ref{sec:proofs} for all proofs.

\begin{proposition}\label{prop:M3coincide}
\begin{enumerate}[a)]
\item 
For all $d\geq 1$ we have $T^d_{\MMMr}=T^d_{\MMr}=T^d_{\MMMb}$.
\item
In the equality $T^d_{\MMr}=T^d_{\MMMb}$ the deterministic shape function $f$ of an M2r process and the distribution function $H$ of $1/R$, where $R$ is the random radius of an M3b process, can be recovered from each other by 
\begin{align}\label{eqn:fandH}
f(u)=\frac{1}{\kappa_d} \int_0^{1/u} s^d \D H(s) \qquad \text{and} \qquad H(s)=\kappa_d \int_0^s \frac{1}{u^d} \,\D \left[f\left(\frac{1}{u}\right)\right].
\end{align}
\end{enumerate}
\end{proposition}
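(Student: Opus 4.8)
The plan is to funnel the whole statement through one ``layer-cake'' identity for radial non-increasing shapes and then to read off the three equalities and the recovery formulas as consequences. Write
\[
g(t,r):=\int_{\RR^d} \eins_{\{\lVert z\rVert\leq r\}}\wedge \eins_{\{\lVert z-t\rVert\leq r\}}\,\D z
\]
for the volume of the intersection of two balls of radius $r$ with centres $o$ and $t$, which is exactly the kernel appearing in the M3b row of Table~\ref{table:semiparameterTCFs}. The decisive point is that, for a non-increasing $f$, the pointwise minimum equals $f(\lVert z\rVert)\wedge f(\lVert z-t\rVert)=f(\max(\lVert z\rVert,\lVert z-t\rVert))$, so the obstruction to linearity in $f$ is again a radial non-increasing function. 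Representing it through the positive Lebesgue--Stieltjes measure $\mu_f:=-\D f$ via $f(x)=\mu_f((x,\infty))=\int_0^\infty \eins_{\{x<r\}}\,\mu_f(\D r)$ and interchanging the two integrals by Tonelli gives the key identity
\begin{align}\label{eqn:keylayer}
\int_{\RR^d} f(\lVert z\rVert)\wedge f(\lVert z-t\rVert)\,\D z=\int_0^\infty g(t,r)\,\mu_f(\D r),
\end{align}
which exhibits the M2r-TCF as \emph{linear} in the measure $\mu_f$, although it is not linear in $f$.

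For part~a) the inclusions $T^d_{\MMr}\subseteq T^d_{\MMMr}$ and $T^d_{\MMMb}\subseteq T^d_{\MMMr}$ are immediate, since deterministic radial non-increasing shapes and random normalised ball indicators are special instances of random radial non-increasing shapes. To prove $T^d_{\MMMr}\subseteq T^d_{\MMr}$, given an M3r process with random shape $f$ I would set $\tilde f(u):=\EE_f f(u)$; this is again radial, non-increasing and satisfies $\lVert\tilde f\rVert_{L^1(\RR^d)}=1$ by Tonelli, hence defines an M2r process. Applying \eqref{eqn:keylayer} to each realisation, taking $\EE_f$, interchanging expectation with the $r$-integral, and using $\EE_f[\mu_f]=\mu_{\tilde f}$ (valid because $\mu_f((a,b])=f(a)-f(b)$) shows that the two TCFs agree, whence $T^d_{\MMMr}=T^d_{\MMr}$. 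For $T^d_{\MMr}=T^d_{\MMMb}$ I would compare the two measures against which $g(t,\cdot)$ is integrated: by \eqref{eqn:keylayer} the M2r-TCF uses $\mu_f$, while the M3b-TCF with radius law $Q$ uses $\D Q(r)/(\kappa_d r^d)$. These match iff $\D Q(r)=\kappa_d r^d\,\mu_f(\D r)=-\kappa_d r^d\,\D f(r)$, and one checks that this is a bijection between admissible objects: the right-hand side is non-negative, and an integration by parts with $\lVert f\rVert_{L^1(\RR^d)}=d\kappa_d\int_0^\infty r^{d-1}f(r)\,\D r=1$ gives it total mass $1$, so $Q$ is a probability law; conversely $f(u)=\mu_f((u,\infty))$ with $\mu_f(\D r)=\D Q(r)/(\kappa_d r^d)$ returns an admissible shape. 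This yields $T^d_{\MMr}=T^d_{\MMMb}$ and completes part~a).

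For part~b) it remains to rewrite the bijection $\D Q(r)=-\kappa_d r^d\,\D f(r)$ in terms of $S:=1/R$, whose law is $H$. From $f(u)=\mu_f((u,\infty))=\kappa_d^{-1}\EE\!\left[\eins_{\{R>u\}}R^{-d}\right]$ and the equivalence $R>u\Leftrightarrow S<1/u$, the substitution $s=1/r$ turns this into $f(u)=\kappa_d^{-1}\int_0^{1/u}s^d\,\D H(s)$, the first identity in \eqref{eqn:fandH}. Replacing $u$ by $1/u$ gives $f(1/u)=\kappa_d^{-1}\int_0^{u}s^d\,\D H(s)$, so that $\D[f(1/u)]=\kappa_d^{-1}u^d\,\D H(u)$; solving for $\D H$ and integrating with $H(0+)=0$ produces $H(s)=\kappa_d\int_0^s u^{-d}\,\D[f(1/u)]$, the second identity. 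The two formulas are then manifestly inverse to one another.

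The heart of the argument, and the only genuinely non-routine step, is the identity \eqref{eqn:keylayer}: once monotonicity has converted the pointwise minimum into $f(\max(\cdot,\cdot))$ and the layer-cake representation has linearised the TCF functional, the remaining work is a change of variables and a mass computation. The one technical point to dispatch is the vanishing of the boundary terms $r^d f(r)$ as $r\to0$ and as $r\to\infty$ in the integration by parts; both follow from the elementary bound $r^d f(r)\leq \tfrac{d}{1-2^{-d}}\int_{r/2}^{r} f(\rho)\rho^{d-1}\,\D\rho$, whose right-hand side tends to $0$ at either end because $\rho^{d-1}f(\rho)$ is integrable on $(0,\infty)$.
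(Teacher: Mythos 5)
Your proof is correct, but it takes a genuinely different route from the paper's. The paper proves the proposition by funnelling everything through Gneiting's class $H_d$ of scale mixtures $\varphi(t)=\int_{(0,\infty)}h_d(st)\,\D G(s)$: the equality $T^d_{\MMMb}=H_d$ is read off from the fact that the minimum of two ball indicators integrates to a self-convolution, the equality $T^d_{\MMr}=H_d$ is imported from Gneiting's identification of $H_d$ with the Mittal--Berman class $V_d$ for $d\geq 2$ (with a separate adaptation of his Theorem~5.2 for $d=1$), the inclusion $T^d_{\MMMr}\subset H_d$ is obtained by averaging per-realization distribution functions $A(f)_s$ with dominated convergence, and the recovery formulas (\ref{eqn:fandH}) are quoted from Gneiting's inversion formulas. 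You instead make the whole argument self-contained via the layer-cake identity, which decomposes a radial non-increasing shape into scaled ball indicators through $\mu_f=-\D f$ and exhibits the M2r functional as linear in $\mu_f$; the equality $T^d_{\MMr}=T^d_{\MMMb}$ then reduces to matching mixing measures via $\D Q(r)=\kappa_d r^d\,\mu_f(\D r)$ (with the mass-one check by Tonelli, your boundary bound being correct), the M3r case is absorbed by the clean averaging step $\tilde f:=\EE_f f$ together with $\EE_f[\mu_f]=\mu_{\tilde f}$, and the formulas (\ref{eqn:fandH}) drop out of the substitution $s=1/r$ in two lines. What your route buys is independence from the external results of Gneiting and a transparent derivation of the recovery formulas; what the paper's route buys is the explicit identification of these classes with $H_d$, which is reused heavily afterwards (the convexity characterizations of $T^d_{\MMMr}$, the class $T^\infty_{\MMMr}$, and the inversion expressions of Table~\ref{table:recoverfGg}), so your argument proves the proposition as stated but would still need the one-line link to $h_d$-mixtures to recover that identification. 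Two small glosses worth fixing in your write-up: the representation $f(x)=\mu_f((x,\infty))$ requires passing to the right-continuous version of $f$ and uses $\lim_{r\to\infty}f(r)=0$; both are harmless, since a non-increasing $f\in L^1(\RR^d)$ must vanish at infinity and its discontinuity radii form a countable set whose spheres are Lebesgue-null, so neither the TCF nor $\lVert f\rVert_{L^1}$ is affected, but this deserves a sentence. Also, your phrase ``these match iff'' overstates what you actually use: the only-if direction would require that the family of intersection volumes $g(t,\cdot)$, $t\in\RR^d$, separates finite measures on $(0,\infty)$, which you never prove --- fortunately only the constructive direction (from $f$ build $Q$, from $Q$ build $f$, with equal TCFs) is needed both for the set equalities in part~a) and for the specific correspondence (\ref{eqn:fandH}) asserted in part~b).
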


In fact, the class $T^d_{\MMMr}$ is well-known in Geostatistics and has been intensively studied in \cite{gneiting_99} (therein called $H_d$). Thus, we can benefit from Gneitings analysis, which is based on \cite{williamson_56} and characterizes $T^d_{\MMMr}$ by monotonicity properties. In particular
\begin{align}\label{eqn:ToneMMMr}
T^1_{\MMMr} &= 
\left\{ \chi: [0,\infty) \rightarrow [0,1] \,\, \middle| \,\, \begin{array}{l}  \text{$\chi$ continuous, convex,}  \\ \text{$\chi(0)=1$ and $\lim_{t \to \infty}\chi(t)=0$} \end{array} \right\}. 
\end{align}
The precise characterization of $T^d_{\MMMr}$ for $d\geq 2$ in terms of convexity properties is stated in \cite[Theorem~3.1.\ and 3.3.]{gneiting_99}. 
Moreover, \cite{gneiting_99} gives inversion formulae that we use to recover the defining quantities $f$ and $R$ of the classes $T^d_{\MMr}$ and $T^d_{\MMMb}$, respectively. 
The explicit expressions in dimensions $d=1,2,3$ are given in Table~\ref{table:recoverfGg} and derived in Section~\ref{sec:tableproofs} (Proof of Table~\ref{table:recoverfGg}). This is of special interest to us when we want to simulate the corresponding processes for a given TCF $\chi$. 

The classes $T^d_{\MMMr}$ are all nested, \ie $T^d_{\MMMr} \supset T^{d+1}_{\MMMr}$ for all $d \in \NN$. Gneiting \cite{gneiting_99} also characterizes the class
\begin{align}\label{eqn:Hinfty}
T^\infty_{\MMMr} := \bigcap_{d=1}^\infty T^d_{\MMMr}
\end{align}
as scale mixtures of the complementary error function \cite[Theorems 3.7 and 3.8]{gneiting_99}
\begin{align}\label{eqn:TinftyMMMr}
T^\infty_{\MMMr} 
&=\left\{ \varphi(t) = \int_{(0,\infty)} \erfc(st) \,\D G(s)  \,:\, \text{$G$ cdf on $(0,\infty)$}\right\}\\
\notag &=\left\{ \varphi: [0,\infty) \rightarrow \RR \,:\, 
\begin{array}{l}
\text{\upshape $\varphi$ continuous, $-\varphi'\left(\sqrt{\cdot}\right)$ completely monotone on $(0,\infty)$,} \\
\text{$\varphi(0)=1$ and $\lim_{t \to \infty}\varphi(t)=0$} 
\end{array}
\right\},
\end{align}
which entails the following characterization of TCFs of VBR processes (\cf Table~\ref{table:semiparameterTCFs}):
\begin{align}\label{eqn:TdVBR}
T^d_{\VBR} &= \left\{ \varphi\left(\sqrt{\gamma/8}\right) \,:\, \text{{\upshape $\gamma$  variogram on $\RR^d$ and $\varphi \in T^\infty_{\MMMr}$}}\right\}.
\end{align}
In Table~\ref{table:erfcmix} we give some examples of corresponding pairs $\varphi$ and distribution functions $G$ (or probability densities $g=G'$) that we need to know in order to simulate a VBR process with prescribed TCF $\chi=\varphi(\sqrt{\gamma/8})$.

Finally, we observe that in every dimension   the class of TCFs arising from MPS processes is given by Laplace transforms of cdfs  on $(0,\infty)$ and, thus, coincides with 
\begin{align*}
T^d_{\MPS} &= \left\{ \psi: [0,\infty) \rightarrow \RR \,:\, \text{\upshape $\psi$ completely monotone, $\psi(0)=1$ and $\lim_{t \to \infty}\psi(t)=0$} \right\}.
\end{align*}
In particular, the class $T^d_{\MPS}$ does not depend on the specific dimension $d$, even though the involved factor ${2\kappa_{d-1}}/{(d\kappa_{d})}$ in Table~\ref{table:semiparameterTCFs} does. These obervations lead to the following inclusions of the classes of TCFs arising from mixing processes, which are also illustrated in Figure~\ref{fig:M3_BR}.

\begin{proposition}\label{prop:MixingInclusions} The following inclusions hold for all dimensions $d\geq 1$:
\begin{enumerate}[a)]
\item\label{item:MPSinMMMr} $T^d_{\MPS} \subset T^\infty_{\MMMr} \subset T^d_{\MMMr}$.
\item $T^d_{\BR} \cup T^\infty_{\MMMr} \subset T^d_{\VBR}$.
\item\label{item:erfcCondition} $\erfc(t^\alpha) \in T^d_{\MPS} \cap T^d_{\BR}$ $\Leftrightarrow$ $\alpha \in (0,0.5]$. In particular $T^d_{\MPS} \cap T^d_{\BR} \neq \emptyset$.
\end{enumerate}
\end{proposition}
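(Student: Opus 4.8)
The plan is to treat the three assertions separately, since (a) and (b) reduce to the closed-form descriptions of the classes assembled just above, whereas the genuine work sits in the sharp threshold in (c). For (a), the inclusion $T^\infty_{\MMMr}\subset T^d_{\MMMr}$ is immediate from the definition $T^\infty_{\MMMr}=\bigcap_{e=1}^\infty T^e_{\MMMr}$ in (\ref{eqn:Hinfty}). For $T^d_{\MPS}\subset T^\infty_{\MMMr}$ I would start from the description of $T^d_{\MPS}$ as Laplace transforms of cdfs, $\psi(t)=\int_{(0,\infty)}e^{-st}\,\D F(s)$, and verify the monotonicity characterisation of $T^\infty_{\MMMr}$ in (\ref{eqn:TinftyMMMr}): differentiating under the integral gives $-\psi'(\sqrt{\cdot})=\int_{(0,\infty)}s\,e^{-s\sqrt{\cdot}}\,\D F(s)$, and since $u\mapsto e^{-s\sqrt u}$ is completely monotone for every $s>0$ (being the Laplace transform of a one-sided $1/2$-stable density), a nonnegative mixture of such functions is again completely monotone; the boundary values $\psi(0)=1$ and $\psi(\infty)=0$ carry over directly.

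For (b) I would first record that $\erfc\in T^\infty_{\MMMr}$, which is trivial from the scale-mixture description (\ref{eqn:TinftyMMMr}) with $G$ the point mass at $1$. Hence every element $\erfc(\sqrt{\gamma/8})$ of $T^d_{\BR}$ is of the form $\varphi(\sqrt{\gamma/8})$ with $\varphi=\erfc\in T^\infty_{\MMMr}$, so $T^d_{\BR}\subset T^d_{\VBR}$ by (\ref{eqn:TdVBR}). For $T^\infty_{\MMMr}\subset T^d_{\VBR}$ I would exhibit one admissible variogram that trivialises the composition: $\gamma(t)=8\lVert t\rVert^2$ is the variogram of the (degenerate) Gaussian field $W_t=\sqrt{8}\,\langle t,\xi\rangle$ with $\xi\sim N(0,\mathrm{Id})$, so $\sqrt{\gamma/8}=\lVert\cdot\rVert$ and $\varphi(\sqrt{\gamma/8})=\varphi$ for every $\varphi\in T^\infty_{\MMMr}$.

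In (c) the membership in $T^d_{\BR}$ is quickly settled: since $\erfc$ is continuous and strictly decreasing, $\erfc(t^\alpha)=\erfc(\sqrt{\gamma(t)/8})$ forces $\gamma(t)=8\lVert t\rVert^{2\alpha}$, and the power function $\lVert\cdot\rVert^{2\alpha}$ is a variogram (conditionally negative definite) precisely for $2\alpha\in(0,2]$, i.e.\ $\alpha\in(0,1]$. The condition $\erfc(t^\alpha)\in T^d_{\MPS}$ is complete monotonicity (the boundary values $1$ and $0$ hold automatically). For the easy range $\alpha\in(0,1/2]$ I would use that $u\mapsto\erfc(\sqrt u)$ is completely monotone — its negative derivative $e^{-u}/\sqrt{\pi u}$ is a product of the completely monotone functions $e^{-u}$ and $(\pi u)^{-1/2}$ — composed with the Bernstein function $t\mapsto t^{2\alpha}$ (Bernstein since $2\alpha\le1$), so that $\erfc(t^\alpha)=\erfc(\sqrt{t^{2\alpha}})$ is completely monotone. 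Intersecting the two ranges gives $\erfc(t^\alpha)\in T^d_{\MPS}\cap T^d_{\BR}$ for $\alpha\in(0,1/2]$, and $\alpha=1/2$ already shows the intersection is nonempty.

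The main obstacle is the converse in (c): that $\erfc(t^\alpha)$ fails complete monotonicity for \emph{every} $\alpha>1/2$. Note that the small-$t$ expansion $\erfc(t^\alpha)=1-\tfrac{2}{\sqrt\pi}t^\alpha+O(t^{3\alpha})$ only rules out $\alpha>1$ (a completely monotone $f$ with $f(0)=1$ cannot have $1-f(t)\sim ct^\alpha$ with $\alpha>1$ unless it is constant), so it cannot see the sharp threshold $1/2$; and one cannot simply ``compose'' one's way to a converse, since the admissible exponent genuinely depends on the inner kernel. My plan is instead to use that every completely monotone function is logarithmically convex (Hölder applied to its Laplace representation). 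Because $\erfc(t^\alpha)$ is completely monotone iff its negative derivative $h(t)=\tfrac{2\alpha}{\sqrt\pi}\,t^{\alpha-1}e^{-t^{2\alpha}}$ is, it suffices to show that $h$ is not log-convex: here $\log h(t)=\mathrm{const}+(\alpha-1)\log t-t^{2\alpha}$ has $(\log h)''(t)=(1-\alpha)/t^2-2\alpha(2\alpha-1)\,t^{2\alpha-2}$, whose bracketed form $t^{2\alpha-2}\bigl[(1-\alpha)t^{-2\alpha}-2\alpha(2\alpha-1)\bigr]$ tends to $-2\alpha(2\alpha-1)<0$ as $t\to\infty$ once $\alpha>1/2$. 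Thus $h$ is not log-convex, hence not completely monotone, hence $\erfc(t^\alpha)\notin T^d_{\MPS}$ for all $\alpha>1/2$ (and at $\alpha=1/2$ the offending term vanishes and log-convexity is restored, matching the easy direction). Combining both directions yields $\erfc(t^\alpha)\in T^d_{\MPS}\cap T^d_{\BR}\iff\alpha\in(0,1/2]$.
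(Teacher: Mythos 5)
Your proof is correct, and for parts a) and b) it follows essentially the paper's own route: the paper also derives $T^d_{\MPS}\subset T^\infty_{\MMMr}$ from the fact that $\sqrt{\cdot}$ is a Bernstein function (your mixture representation $-\psi'(\sqrt{u})=\int s\,e^{-s\sqrt{u}}\,\D F(s)$ with each $e^{-s\sqrt{u}}$ completely monotone is the same fact, unwound via the $1/2$-stable subordinator), and it likewise uses the variogram $\gamma(t)=8\lVert t\rVert^2$ to obtain $T^\infty_{\MMMr}\subset T^d_{\VBR}$, while $T^d_{\BR}\subset T^d_{\VBR}$ is immediate because BR processes are VBR processes with $G$ the point mass at $1$ --- equivalent to your observation that $\erfc\in T^\infty_{\MMMr}$. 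The genuine difference is in part c): the paper's proof consists of two assertions --- that $\gamma(t)=8\lVert t\rVert^{2\alpha}$ is a valid variogram for $\alpha\in(0,1]$ (fractal Brownian motion), and that ``$\erfc(t^\alpha)$ is completely monotone if and only if $\alpha\leq 0.5$'' --- and leaves the second unproved (it is of the same flavour as the classical fact that $e^{-t^\beta}$ is completely monotone iff $\beta\leq 1$, cf.\ the paper's citations to \cite{millersamko_01} elsewhere). You supply a complete, self-contained argument: sufficiency by composing the completely monotone function $\erfc(\sqrt{u})$ (whose negative derivative $e^{-u}/\sqrt{\pi u}$ is a product of completely monotone functions) with the Bernstein function $t\mapsto t^{2\alpha}$ for $2\alpha\leq 1$; necessity via log-convexity of completely monotone functions applied to $h(t)=-\frac{\D}{\D t}\erfc(t^\alpha)$, where
\begin{align*}
(\log h)''(t)=t^{-2}\left[(1-\alpha)-2\alpha(2\alpha-1)\,t^{2\alpha}\right]
\end{align*}
is eventually negative whenever $\alpha>1/2$, so $h$ cannot be completely monotone. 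This is an elementary and sharp replacement for the paper's bare claim, and your remark that the first-order expansion at $0$ only excludes $\alpha>1$ correctly explains why a cruder necessity argument cannot see the threshold $1/2$. Two small observations: for the ``$\Rightarrow$'' of c) as stated you only need the MPS half, since membership in the intersection already forces complete monotonicity and hence $\alpha\leq 1/2$, which lies inside the BR range $(0,1]$; and your injectivity argument pinning down $\gamma(t)=8\lVert t\rVert^{2\alpha}$, hence the full equivalence $\erfc(t^\alpha)\in T^d_{\BR}\Leftrightarrow\alpha\in(0,1]$, is exactly the content of Proposition~\ref{prop:MixingNonempty}\ref{item:BRohneMPS}) of the paper, so you have proved that statement along the way.
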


\setlength{\unitlength}{1cm}
\begin{table}
{\small
\centering
\begin{tabular}{p{.05\textwidth} | >{$\displaystyle}p{.15\textwidth}<{$}  >{$\displaystyle}p{.40\textwidth}<{$}  >{$\displaystyle}p{.22\textwidth}<{$}}
\toprule \\[-3mm]
& d=1 & d=2 & d=3 \\[1mm] 
\toprule \\[-1mm]
$f(u)$ 
&  -\chi'(2u)    
&  \frac{4u}{\pi} \int_0^{1/(2u)} ((2ut)^{-2}-1)^{1/2} \,\, \D \lambda_{\chi}(t)
& \chi''(2u)/(\pi u) 
\\[5mm]
$k(s)$ 
&  s \chi''\left(s\right) 
&  \frac{s^2}{2} \int_0^{1/s} \left((s/t)^2-1\right)^{-1/2} \,\, \D \lambda_{\chi}(t) 
&  \frac{s}{3}\left(\chi''(s)-s \chi'''(s)\right)
\\[5mm]
\bottomrule
\end{tabular}
\caption{
Recovery expressions for the defining quantities of M2r and M3b processes from a given TCF $\chi \in T^d_{\MMMr} = T^d_{\MMr} = T^d_{\MMMb}$ in dimensions $d=1,2,3$ (\cf Proposition~\ref{prop:M3coincide}): (i) the monotone shape function $f$ of an M2r process and (ii) the density $k$ of $2R$, where $R$ is the random radius that defines an M3b process (if the density $k$ exists). 
The functions $f$ and $k$ are defined on $(0,\infty)$ where $f$ may have a pole at $0$ and $k$ may have other poles as well.
We abbreviate $\lambda_{\chi}(t):=t \chi''(1/t)$. 
In case $d=2$ we assume here that even $\chi \in T^5_{\MMMr}$ holds in order to eliminate an additional integral.
}\label{table:recoverfGg}
\vspace{0.5cm}
}
{\small
\begin{tabular}{>{$\displaystyle}p{.52\textwidth}<{$}  >{$\displaystyle}p{.27\textwidth}<{$}  >{$\displaystyle}p{.12\textwidth}<{$}}
\toprule \\[-3mm] 
\text{Distribution function } G(s) \text{ or } g(s)=G'(s) & 
\multicolumn{2}{ >{$\displaystyle}p{.36\textwidth}<{$} }{\varphi(t)=\int_{0}^\infty \erfc(st) \, \D G(s)} \\[4mm] 
\toprule \\[-2mm]
G(s)=e^{-1/{(as)^2}}  & e^{-2t/a} & 0 < a \\[3mm]
g(s)=\frac{\sqrt{\pi}}{\Gamma(\nu)\Gamma(\frac{1}{2}-\nu)} \int_0^s 
\frac{x^{2\nu-3} e^{-1/(4x^2)}}{\left(s^2-x^2\right)^{\nu+1/2}} \, \D x
& \frac{2^{1-\nu}}{\Gamma(\nu)}\, t^{\nu} K_\nu(t) &  0 < \nu < \frac{1}{2} \\[5mm]
G(s)=\erf(as)  & 1-\frac{2}{\pi} \arctan\left(t/a \right) & 0 < a\\[3mm]
G(s)=1-e^{-(as)^2}  & 1-\left(1+\left(t/a\right)^{-2}\right)^{-1/2} & 0 < a\\[3mm]
\bottomrule
\end{tabular}
\caption{Members of the class $T^\infty_{\MMMr}$ (\cf (\ref{eqn:Hinfty})) and their corresponding distribution function $G(s)$ or probability density $g(s)=G'(s)$ on $(0,\infty)$ as scale mixtures of the complementary error function. As special cases the exponential model, the Whittle-Mat{\'e}rn family, the arctan model and the Dagum model (\cf \cite{bergmateuporcu_08}) appear. 
}\label{table:erfcmix}
}
\end{table}

\setlength{\unitlength}{1cm}
\begin{figure}
\footnotesize
\centering
\begin{picture}(14,7)
\put(0,1){\framebox(10.5,6)[tl]{\hspace{0.3cm} \begin{minipage}{7cm} \vspace{0.3cm}  {\normalsize $T^d_{\MMMr} = T^d_{\MMr} = T^d_{\MMMb}$} \\[1mm] \textsf{(M3 processes of radial non-increasing shapes)}  \end{minipage}}}
\put(1,1.5){\framebox(9,3.5)[tl]{\hspace{0.3cm} \begin{minipage}{7cm} \vspace{0.3cm} {\normalsize $T^\infty_{\MMMr}$} \end{minipage}}}
\put(4.5,2){\framebox(5,2.5)[tl]{\hspace{0.3cm} \begin{minipage}{7cm} \vspace{0.3cm} {\normalsize $T^d_{\MPS}$} \textsf{(Mixed Poisson Storms)}\\[1mm] = \textsf{completely monotone TCFs} \end{minipage}}}
\put(1.8,0.5){\dashbox{0.1}(11.8,2.3)[tr]{\begin{minipage}{2.4cm} \vspace{0.3cm} \hfill {\normalsize $T^d_{\BR}\,\,$} \\[1mm] \textsf{(Brown-Resnick)}\\ \textsf{with $\gamma$ radial,}\\ \textsf{increasing to $\infty$}  \end{minipage}\hspace{0.3cm}}}
\put(0.5,0){\dashbox{0.1}(13.5,5.5)[tr]{\begin{minipage}{2.4cm} \vspace{0.3cm} \hfill {\normalsize $T^d_{\VBR}\,\,$} \\[1mm] \textsf{(Variance-mixed}\\ \textsf{Brown-Resnick)}\\ \textsf{with $\gamma$ radial,}\\ \textsf{increasing to $\infty$}  \end{minipage}\hspace{0.3cm}}}
\put(2.2,2.2){\textbf{(Prop.\ \ref{prop:MixingNonempty}\ref{item:BRohneMPS})}}
\put(2.2,3.5){\textbf{(Prop.\ \ref{prop:MixingNonempty}c)}}
\put(6.2,2.3){\textbf{(Prop.\ \ref{prop:MixingInclusions}\ref{item:erfcCondition})}}
\put(8.2,6.2){\textbf{(Prop.\ \ref{prop:MixingNonempty}\ref{item:M3ohneVBR})}}
\end{picture}
\caption{
Inclusions and intersection of sets of tail correlation functions arising from mixing max-stable processes as stated in Propositions \ref{prop:M3coincide} and \ref{prop:MixingInclusions}. Propositions \ref{prop:MixingInclusions} \ref{item:erfcCondition}) and \ref{prop:MixingNonempty} 
show that the indicated regions are non-empty.
}\label{fig:M3_BR}
\vspace{1cm}
\hspace{-1cm}\textsf{\textbf{\scriptsize BR process}}
\hspace{4cm} \textsf{\textbf{\scriptsize MPS process}}\\
\mbox{
\scalebox{0.33}{\includegraphics{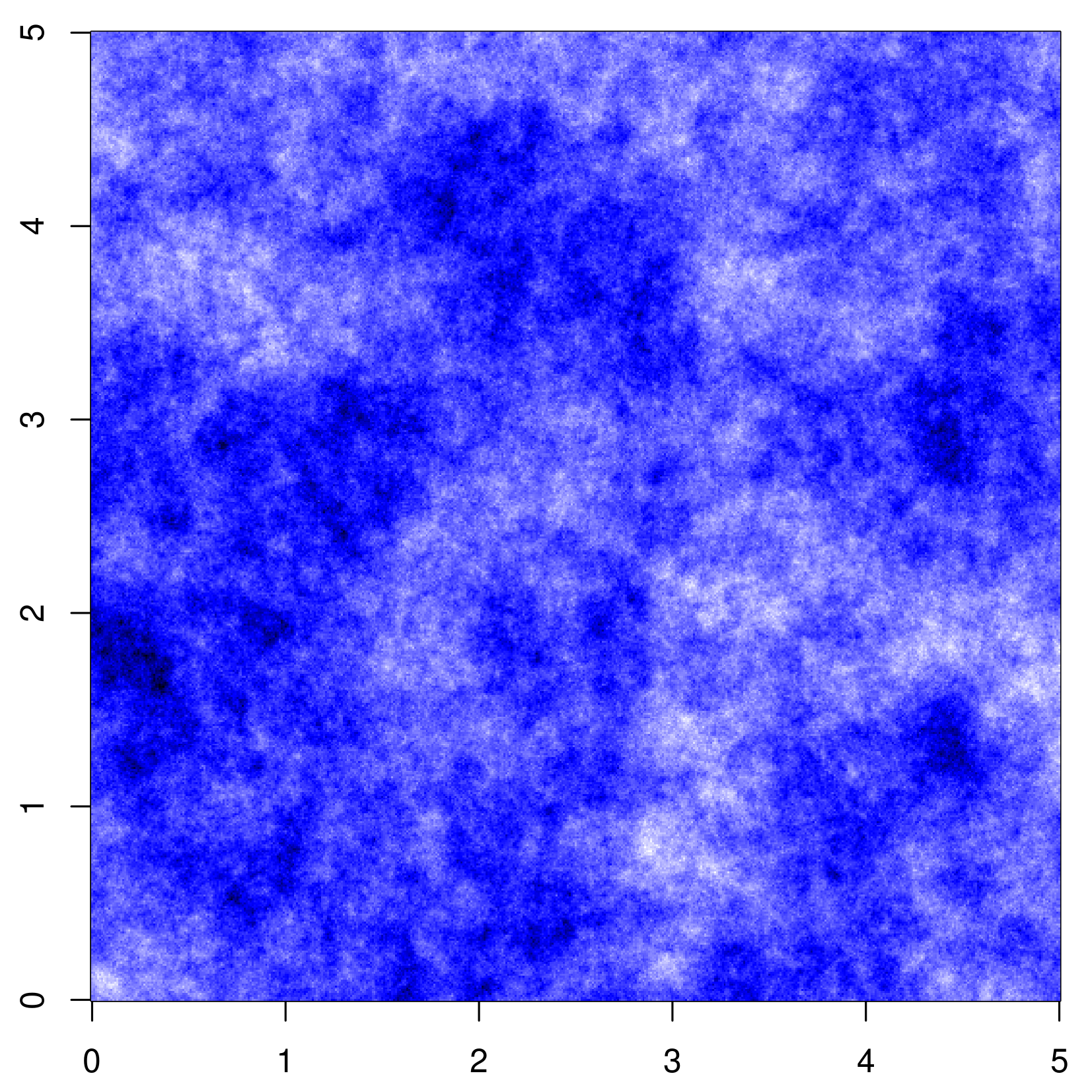}}
\scalebox{0.33}{\includegraphics{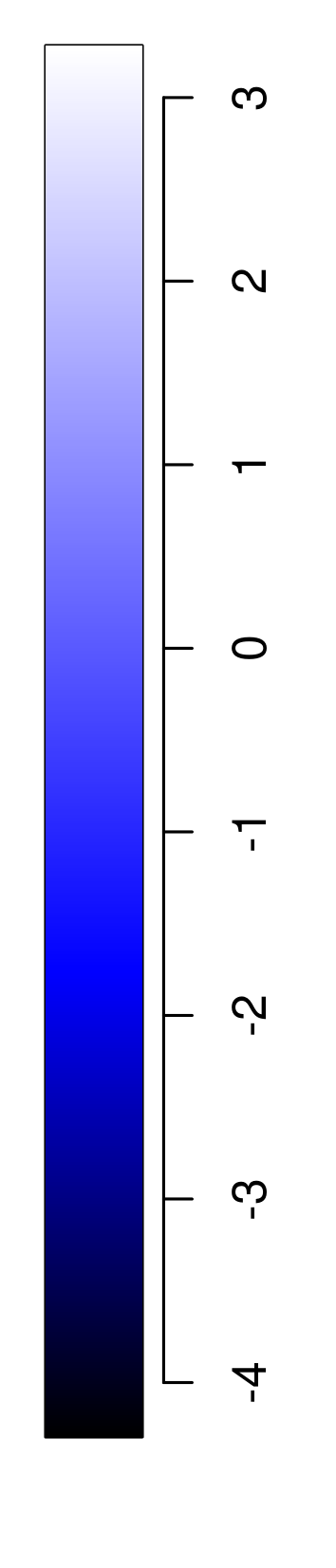}}
\scalebox{0.33}{\includegraphics{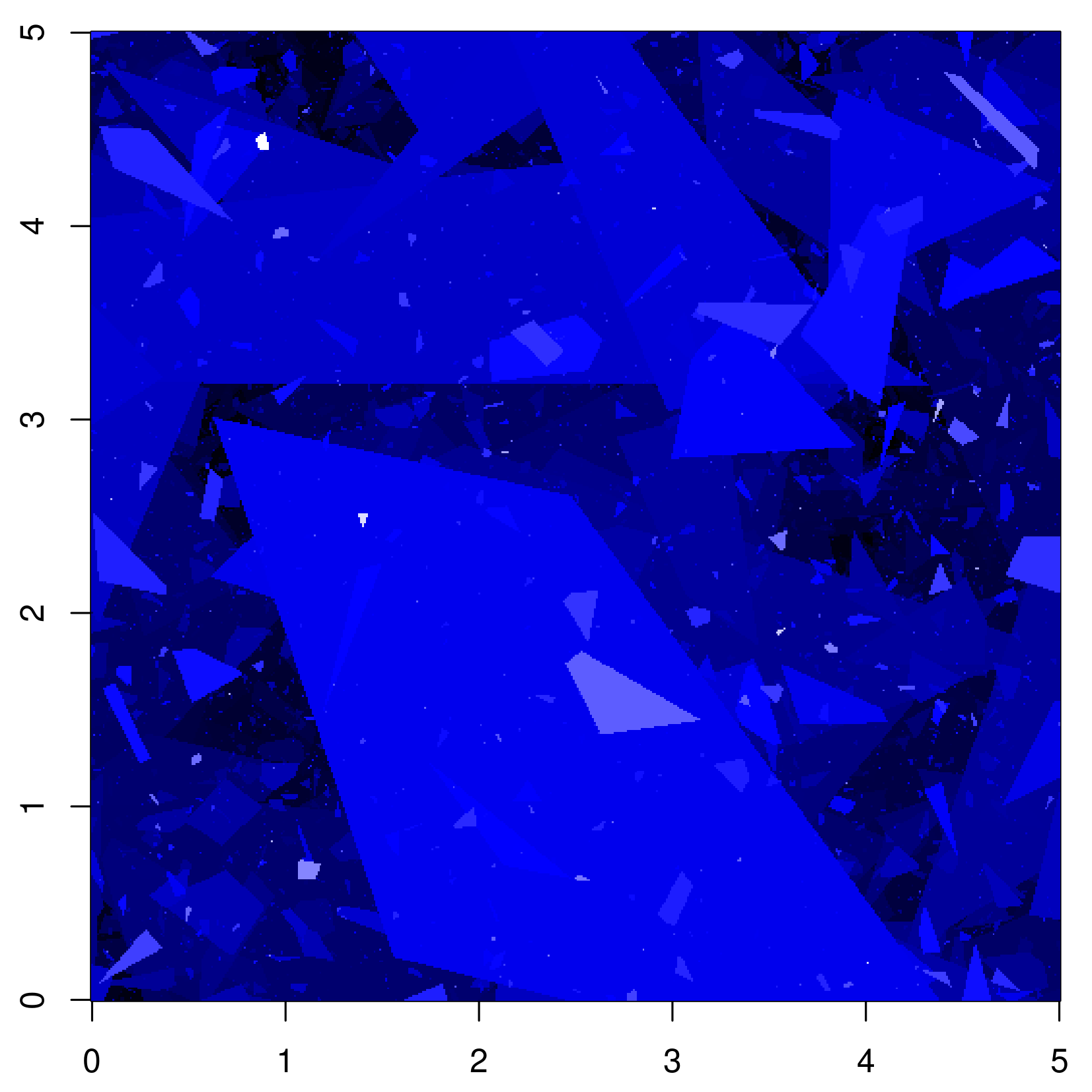}}
\scalebox{0.33}{\includegraphics{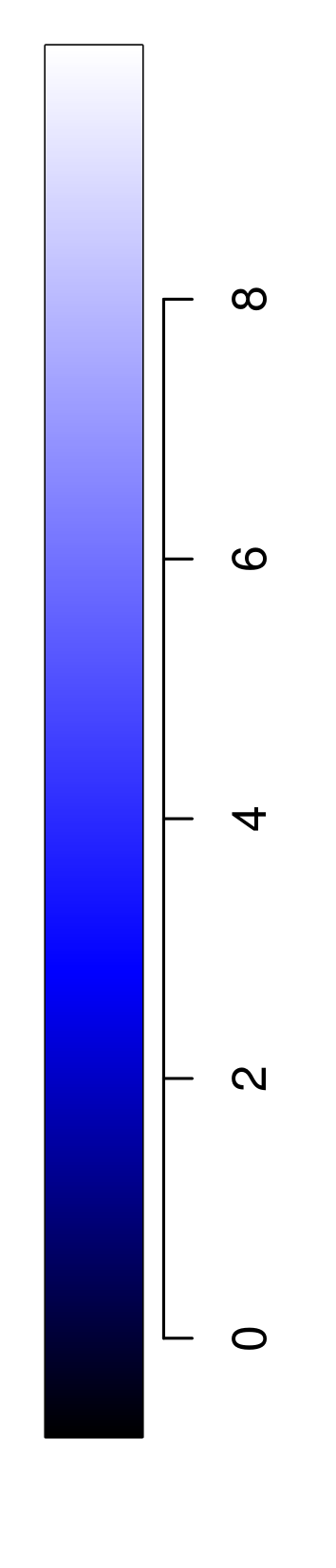}}
}
\mbox{
\scalebox{0.33}{\includegraphics{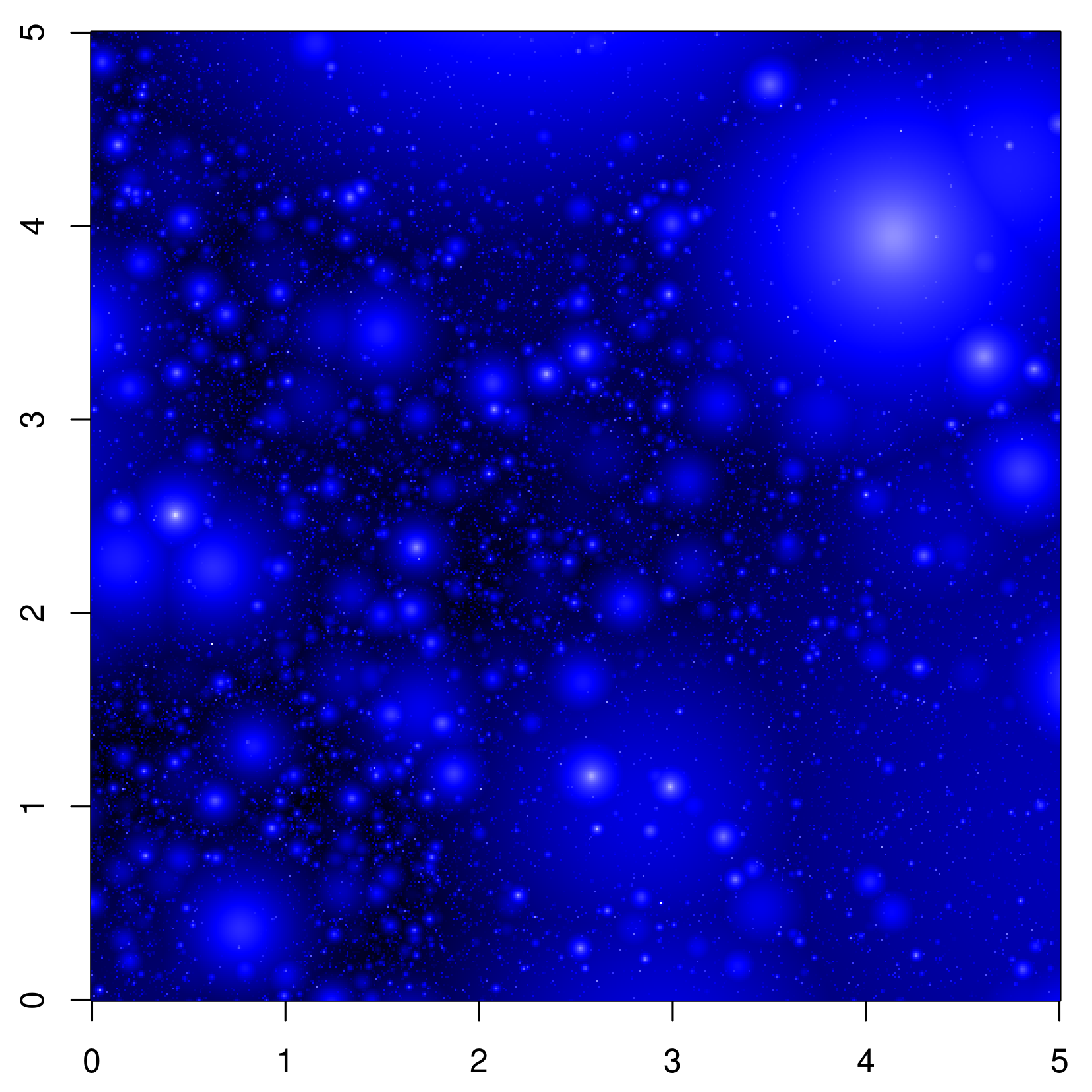}}
\scalebox{0.33}{\includegraphics{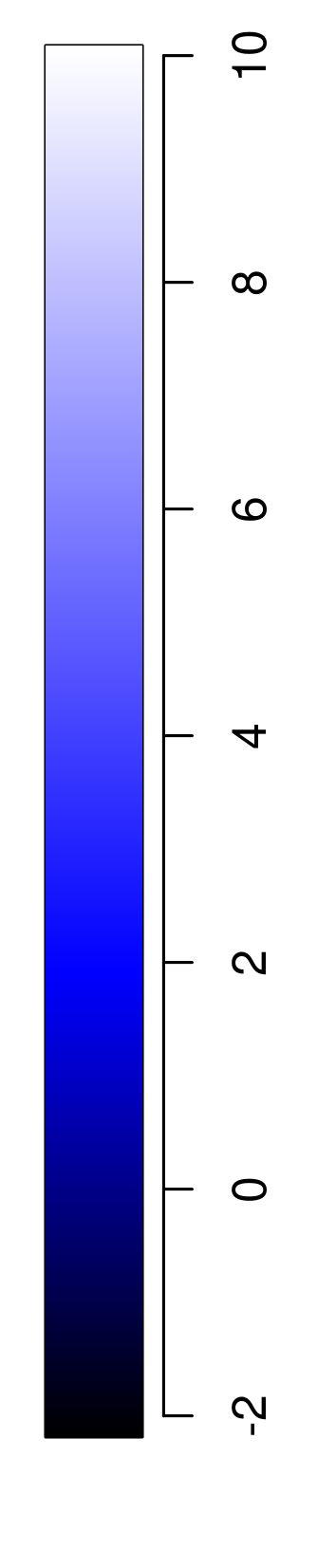}}
\scalebox{0.33}{\includegraphics{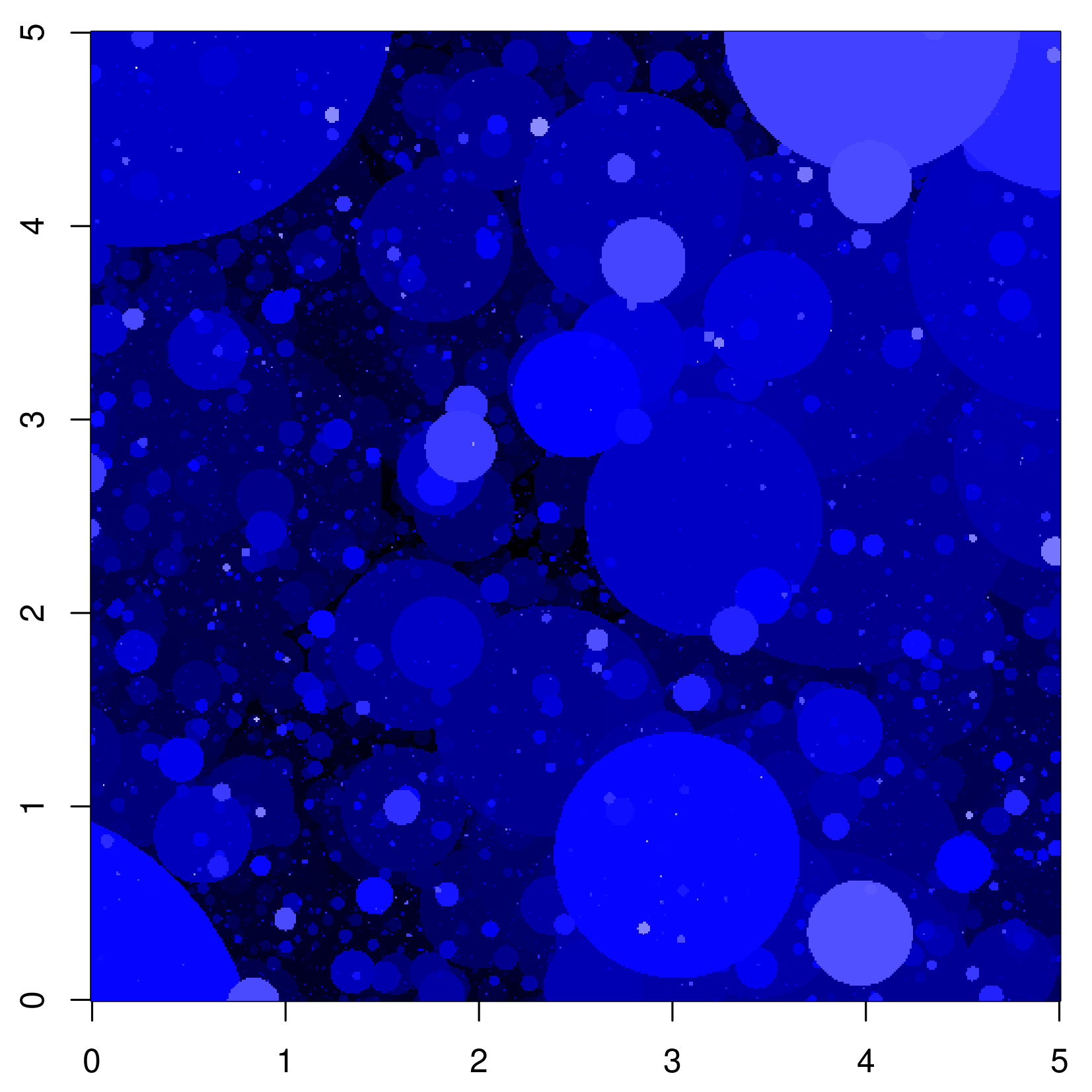}}
\scalebox{0.33}{\includegraphics{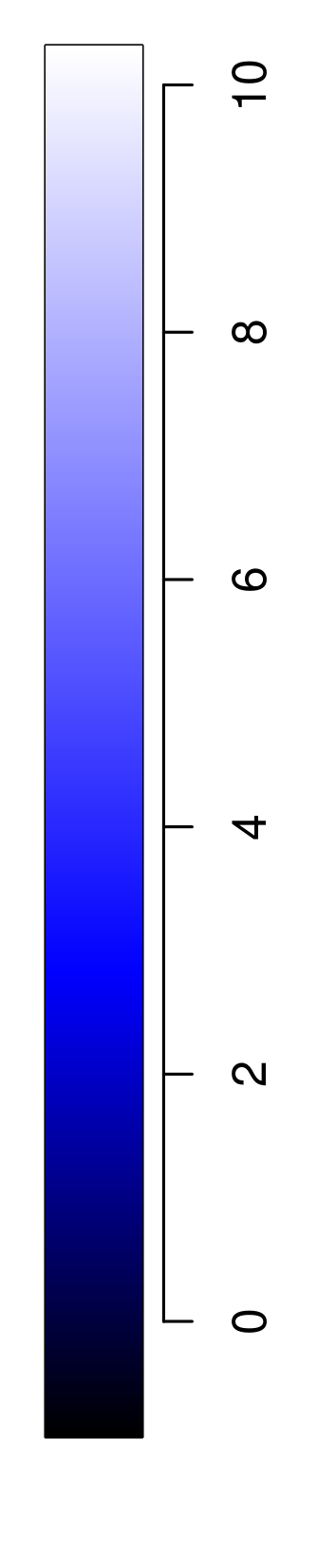}}
}\\
\hspace{-1cm}\textsf{\textbf{\scriptsize M2r process}}
\hspace{4cm} \textsf{\textbf{\scriptsize M3b process}}
\caption{Simulations of different processes on $[0,5]^2 \subset \RR^2$ 
with identical tail correlation function $\chi(t)=\erfc(\sqrt{\lVert t\rVert})$ (see Example~\ref{example:M3shape_M3ball_PS_BR}): Brown-Resnick process (BR), Mixed Poisson Storm process (MPS), two-dimensional section of an M2r process with deterministic shape (M2r), two-dimensional section of an M3b process of normalized ball indicator functions (M3b). The plots were transformed to standard Gumbel marginals.} \label{fig:M3shape_M3ball_PS_BR}
\end{figure}

\begin{example}\label{example:M3shape_M3ball_PS_BR}
We consider the following four processes on $\RR^2$:
\begin{enumerate}[(i)]
\item the BR process on $\RR^2$ associated to the variogram 
\begin{align*}\gamma(t)=8 \lVert t \rVert, \qquad t \in \RR^2,\end{align*}
\item the MPS process on $\RR^2$ with intensity mixing distribution
\begin{align*}F(s) = 
\left\{
\begin{array}{ll}
0 &\qquad \text{if } s \leq \frac{\pi}{2},\\
2\pi^{-1} \arctan\left(\sqrt{\frac{2s}{\pi}-1}\right) & \qquad \text{if } s>\frac{\pi}{2}, 
\end{array}
\right.
\end{align*} 
\item the restriction of the M2r process on $\RR^3$ with deterministic shape function
\begin{align*}
f(t)= \frac{1+4\lVert t\rVert}{ (\pi)^{3/2} \lVert 2 t \rVert^{5/2}} e^{-2\lVert t\rVert},
\qquad t \in \RR^3, 
\end{align*}
to $\RR^2 = \{ (t_1,t_2,0) : t \in \RR^3 \}$,
\item the restriction of the M3b process on $\RR^3$ where the density $k$ of $2R$ is given by 
\begin{align*}
k(s)= \frac{\left(4 s^2 + 8s +5\right)  e^{-s}}{12 \sqrt{\pi s}}, \qquad s \in [0,\infty)
\end{align*} 
to $\RR^2 = \{ (t_1,t_2,0) : t \in \RR^3 \}$.
\end{enumerate}
Then all of these processes on $\RR^2$ share the same TCF 
\begin{align*}
\chi(t)=\erfc\left(\sqrt{\lVert t \rVert}\right), \qquad t \in \RR^2.
\end{align*}
The variogram $\gamma$ corresponds to Brownian motion. Proposition~\ref{prop:MixingInclusions}\ref{item:erfcCondition}) ensures that the cdf $F$ exists, and Propositions \ref{prop:MixingInclusions}\ref{item:MPSinMMMr}) and \ref{prop:M3coincide} give the existence of $f$ and $R$. While the recovery of $F$ follows from \cite[\page 1100 17.13.5]{gradshteynryzhik_07}, the quantities $f$ and $R$  are recovered from $\chi$ as in Table~\ref{table:recoverfGg}. For ease of simulation we consider only the two-dimensional sections of M2r and M3b processes on $\RR^3$ instead of two-dimensional M2r and M3b processes.
Figure~\ref{fig:M3shape_M3ball_PS_BR} shows simulations of the BR process and the restricted M2r and M3b processes that were obtained using the R-package RandomFields V3.0 {\cite{RandomFields_3.0}}. 
\end{example}



\subsection{Non-ergodic processes}\label{sec:Gaussian_BR}

Here, we compare the TCFs of EG, EBG and BR processes. Therefore, we need to consider BR processes associated to bounded variograms on $\RR^d$. Such variograms $\gamma$ are always of the form 
\begin{align*}
\gamma(t)=\lambda (1-\rho(t)), \qquad t \in \RR^d,
\end{align*}
where $\rho$ is a correlation function on $\RR^d$, and $\lambda > 0$  (\cf \cite[Section~3.1]{gneitingsasvarischlather_01} or \cite[\page 32]{chilesdelfiner_99}). 
Hence, the TCFs of BR, EG and an EBG processes all depend on a correlation function  $\rho$ (\cf Table~\ref{table:semiparameterTCFs}). The ansatz  
\begin{align*}
\chi_{\EG}(t)&=\chi_{\EBG}(t) \quad & \Leftrightarrow \quad &&
1 - \sqrt{(1-\rho_{\EG}(t))/2} 
&=  {\pi}^{-1} \arcsin \rho_{\EBG}(t) + 1/2\\
\chi^{(\lambda)}_{\BR}(t)&=\chi_{\EG}(t) \quad & \Leftrightarrow \quad &&
\erfc\left[\sqrt{\lambda \left(1-\rho_{\BR}(t)\right) / 8}\right] 
&=  1 - \sqrt{(1-\rho_{\EG}(t))/2} \\
\chi^{(\lambda)}_{\BR}(t)&=\chi_{\EBG}(t) \quad & \Leftrightarrow \quad &&
\erfc\left[\sqrt{\lambda \left(1-\rho_{\BR}(t)\right) / 8}\right] 
&= {\pi}^{-1} \arcsin \rho_{\EBG}(t) + 1/2,
\end{align*}
leads to the question, if or for which $\lambda > 0$  the maps
\begin{align}
\label{eqn:R}
R: &[-1,1] \rightarrow [-1,1], \qquad & R(x) &:= \cos \left( \pi \, \sqrt{\left(1-x\right) / 2} \right) \\
\label{eqn:Slambda}
S_\lambda: &[-1,1] \rightarrow [-1,1], \qquad & S_\lambda(x) &:= 1- 2 \left(\erf\left[\sqrt{\lambda \left(1-x \right) / 8}\right]\right)^2 \\
\label{eqn:Tlambda}
T_\lambda: &[-1,1] \rightarrow [-1,1], \qquad & T_\lambda(x)&:=\cos \left( \pi \, \erf\left[\sqrt{\lambda \left(1-x\right) / 8}\right] \right)
\end{align}
(or its inverses $R^{-1}$, $S_\lambda^{-1}$, $T_\lambda^{-1}$) transform correlation functions again into correlation functions. 

\begin{proposition}\label{prop:cortrafo}
Let $A \in \{R,S_\lambda,T_\lambda\}$, where $R$, $S_\lambda$ and $T_\lambda=R \circ S_\lambda$ are the maps from (\ref{eqn:R}),(\ref{eqn:Slambda}) and (\ref{eqn:Tlambda}). Then $A$ transforms a correlation function of the form 
\begin{align*}
\rho= (1-\alpha) \rho_0 + \alpha  \quad \text{where $\rho_0$ is a correlation function and $\alpha \in [0,1]$}
\end{align*} 
 again into a correlation function if
\begin{align*}
\alpha &\geq \frac{1}{2} && \qquad \text{in case $A=R$,}\\
\lambda &\leq \frac{8 (\erf^{-1}(1/\sqrt{2}))^2}{1-\alpha} \approx \frac{4.425098}{1-\alpha} && \qquad \text{in case $A=S_\lambda$,}\\
\lambda &\leq \frac{8 (\erf^{-1}(1/2))^2}{1-\alpha} \approx \frac{1.8197}{1-\alpha} && \qquad \text{in case $A=T_\lambda$.}
\end{align*}  
\end{proposition}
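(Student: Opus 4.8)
The plan is to reduce the statement to a single monotonicity property of $A$ and then verify it for each map through complete monotonicity, exploiting a common integral representation. Write $\rho=(1-\alpha)\rho_0+\alpha$ and $g(y):=A((1-\alpha)y+\alpha)$, so that $A(\rho)=g(\rho_0)$. Each of $R,S_\lambda,T_\lambda$ is an even function of the square root appearing in its definition, hence real-analytic in $x$ on $(-\infty,1]$ with $A(1)=1$. Expanding $A$ at $x=\alpha$ gives $A(\rho)=\sum_{k\ge0}a_k\rho_0^{\,k}$ with $a_k=A^{(k)}(\alpha)(1-\alpha)^k/k!$ and $\sum_k a_k=A(1)=1$. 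Since every $\rho_0^{\,k}$ is positive definite and $\sum_k a_k|\rho_0|^k\le\sum_k a_k<\infty$, as soon as all $a_k\ge0$ the series converges uniformly to a positive definite function taking the value $1$ at the origin, i.e.\ to a correlation function. As $(1-\alpha)^k/k!>0$, the condition $a_k\ge0$ is equivalent to $A^{(k)}(\alpha)\ge0$ for all $k$; writing $h(v):=A(1-v)$ and using $A^{(k)}(\alpha)=(-1)^kh^{(k)}(1-\alpha)$, this is implied by $h$ being completely monotone on $[0,1-\alpha]$. So it suffices to prove that $h$ is completely monotone on $[0,1-\alpha]$.

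\emph{Key input.} I would use two standard facts: products (and pointwise limits of products) of completely monotone functions are completely monotone; and a nonnegative $f$ whose negative derivative $-f'$ is completely monotone is itself completely monotone. The crucial computation is that, for $b>0$, the function $v\mapsto\erf(b\sqrt v)^2$ has a completely monotone derivative on $[0,\infty)$: substituting $s=b\sqrt v\,r$ gives $\erf(b\sqrt v)=\tfrac{2b\sqrt v}{\sqrt\pi}\int_0^1 e^{-b^2vr^2}\,\D r$, so together with $\tfrac{\D}{\D v}\erf(b\sqrt v)=\tfrac{b}{\sqrt{\pi v}}\,e^{-b^2v}$ one obtains
\[
\tfrac{\D}{\D v}\,\erf(b\sqrt v)^2=\tfrac{4b^2}{\pi}\int_0^1 e^{-b^2v(1+r^2)}\,\D r ,
\]
a superposition of decaying exponentials, hence completely monotone on $[0,\infty)$.

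\emph{The three maps.} For $S_\lambda$ one has $h(v)=1-2\erf(b\sqrt v)^2$ with $b^2=\lambda/8$; by the key input $-h'$ is completely monotone on $[0,\infty)$, and $h\ge0$ exactly on $[0,v_*]$ with $v_*=(\erf^{-1}(1/\sqrt2))^2/b^2$, so $h$ is completely monotone on $[0,v_*]$. The requirement $1-\alpha\le v_*$ is precisely $\lambda\le 8(\erf^{-1}(1/\sqrt2))^2/(1-\alpha)$. For $R$ and $T_\lambda$ I would insert the Hadamard product $\cos z=\prod_{n\ge0}\bigl(1-4z^2/((2n+1)^2\pi^2)\bigr)$. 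With $z=\pi\sqrt{v/2}$ this yields $h(v)=\prod_{n\ge0}(1-2v/(2n+1)^2)$ for $R$; each factor is affine with completely monotone negative derivative and is nonnegative on $[0,(2n+1)^2/2]$, the binding factor being $1-2v$, so the product is completely monotone on $[0,1/2]$, and $\alpha\ge\tfrac12$ gives $1-\alpha\le\tfrac12$. With $z=\pi\erf(b\sqrt v)$ one gets, for $T_\lambda$, factors $1-\tfrac{4}{(2n+1)^2}\erf(b\sqrt v)^2$: by the key input each has completely monotone negative derivative, those with $n\ge1$ are positive throughout, and the factor $n=0$, i.e.\ $1-4\erf(b\sqrt v)^2$, is nonnegative on $[0,v_{**}]$ with $v_{**}=(\erf^{-1}(1/2))^2/b^2$; hence $h$ is completely monotone on $[0,v_{**}]$ and $1-\alpha\le v_{**}$ reads $\lambda\le 8(\erf^{-1}(1/2))^2/(1-\alpha)$. (Since $T_\lambda=R\circ S_\lambda$, one could also argue by composition, but the direct factorization is cleaner.)

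\emph{Main obstacle.} The reduction is routine; the content is the complete-monotonicity input. The principal difficulty is spotting the representation that makes $\tfrac{\D}{\D v}\erf(b\sqrt v)^2$ completely monotone: a naive factorization fails because $\erf(b\sqrt v)$ is merely a Bernstein function, so one genuinely needs the cancellation displayed by the form $\int_0^1 e^{-b^2v(1+r^2)}\,\D r$. Secondary technical points — that the infinite Hadamard products inherit complete monotonicity from their partial products, and that the analyticity used in the reduction survives the branch point at $x=1$ — are dispatched by standard results.
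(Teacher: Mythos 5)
Your proof is correct, and its skeleton coincides with the paper's: the paper likewise reduces the claim to absolute monotonicity of $x \mapsto A((1-\alpha)x+\alpha)$ on $[0,1]$ (Proposition~\ref{prop:absmon}), justified by exactly your closure argument (convex combinations, Schur products and pointwise limits of correlation functions are again correlation functions), and for $S_\lambda$ the paper's Lemma~\ref{lemma:cmfunction} proves precisely your key input --- complete monotonicity of $1-\erf(\sqrt{\cdot}\,)^2$ --- except that it factors $-f'(x)=\tfrac{2}{\sqrt\pi}\,e^{-x}\,\erf(\sqrt x)/\sqrt x$ and cites complete monotonicity of $\erf(\sqrt x)/\sqrt x$ from the literature, whereas your substitution $\erf(b\sqrt v)=\tfrac{2b\sqrt v}{\sqrt\pi}\int_0^1 e^{-b^2vr^2}\,\D r$ makes that step self-contained. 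Where you genuinely diverge is the cosine: the paper expands $\cos\bigl(\pi\sqrt{(1-x)/2}\bigr)$ into a double series at $x=0$ and proves positivity of every coefficient of order $k\geq 1$ by pairing even and odd terms, then handles $T_\lambda$ via the composition $T_\lambda=R\circ S_\lambda$ (a step that silently uses $S_\lambda(0)\geq 1/2>0$ under the $T_\lambda$-bound, so that the nonnegative power series compose); your Hadamard factorization $\cos(\pi w)=\prod_{n\geq0}\bigl(1-4w^2/(2n+1)^2\bigr)$ replaces the sign bookkeeping with a transparent binding-factor argument and treats $R$ and $T_\lambda$ uniformly, with the sharp constants $\alpha\geq 1/2$, $\erf^{-1}(1/\sqrt2)$ and $\erf^{-1}(1/2)$ emerging as zeros of the respective binding factors. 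The trade-off: the paper's computation is elementary and yields slightly more (strict positivity of all higher coefficients of $R$ at $0$ for every $\alpha$, isolating the constant term as the sole obstruction), while yours imports the product formula for cosine and needs closure of complete monotonicity under the infinite product --- harmless here, since the partial products of a Hadamard product converge locally uniformly in $\CC$ together with all derivatives, so the sign conditions pass to the limit; also note that at $\alpha=1$ your equivalence $a_k\geq 0 \Leftrightarrow A^{(k)}(\alpha)\geq 0$ degenerates, but then $\rho\equiv 1$ and the claim is trivial.
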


{
Thus, we have the systematic intersections of classes of TCFs as illustrated in Figure~\ref{fig:Gaussian_BR}. 
Note that $T_\lambda = R \circ S_\lambda$ and the upper bound on $\lambda$ in Proposition~\ref{prop:cortrafo} for $T_\lambda$ is smaller than the upper bound for $S_\lambda$, such that the transformation $T_\lambda$ gives rise only to elements in the intersection of all three classes of TCFs.
}

\begin{example}\label{example:EG_EBG_BR}
The BR process on $\RR^d$ associated to the variogram 
\begin{align*}\gamma_{\BR}(t)=1.62 (1-\exp(-\lVert t \rVert)), \qquad t \in \RR^d,\end{align*}
the EG process on $\RR^d$ associated to the correlation function 
\begin{align*}\rho_{\EG}(t)=1- 2 \left(\erf\left[0.45 \sqrt{1- \exp(-\lVert t \rVert)}\right]\right)^2, \qquad t \in \RR^d,\end{align*} 
and the EBG process on $\RR^d$ associated to the correlation function 
\begin{align*}\rho_{\EBG}(t)=\cos \left( \pi \, \erf\left[0.45 \sqrt{1-\exp(-\lVert t \rVert)}\right] \right), \qquad t \in \RR^d, \end{align*} 
all share the same TCF 
\begin{align*}\chi(t)=\erfc\left[0.45 \sqrt{1-\exp(-\lVert t \rVert)}\right], \qquad t \in \RR^d.\end{align*}
Indeed $\gamma_{\BR}$ is a well-known variogram on $\RR^d$ (\cf \eg \cite[Section~4]{gneitingsasvarischlather_01}) and Proposition~\ref{prop:cortrafo} ensures that the functions $\rho_{\EG}$ and $\rho_{\EBG}$ are correlation functions on $\RR^d$, such that the respective processes are well-defined. 
Figure~\ref{fig:EG_EBG_BR} shows simulations of these processes in dimension $d=2$ that were obtained using the R-package RandomFields V3.0 {\cite{RandomFields_3.0}}.
\end{example}

\setlength{\unitlength}{1cm}
\begin{figure}\footnotesize
\centering
\begin{picture}(14,5)
\put(0.5,1.5){\framebox(9.5,3.5)[tl]{\hspace{0.3cm} \begin{minipage}{4cm} \vspace{0.3cm} {\normalsize $T^d_{\EG}$} \textsf{(extremal Gaussian)} \\[1mm] \textsf{with} {\small $\rho_{\EG} = (1-\alpha)\rho_0 + \alpha$} \end{minipage}}}
\put(0,0){\framebox(9.5,3)[bl]{\hspace{0.3cm} \begin{minipage}{7cm}  {\normalsize $T^d_{\EBG}$} \textsf{(extremal binary Gaussian) with} {\small $\rho_{\EBG}$} \\  \end{minipage}}}
\put(5.3,1){\framebox(8.7,3.5)[tr]{\begin{minipage}{3.3cm} \vspace{0.3cm} {\normalsize \hfill $T^d_{\BR}$} \textsf{(Brown-Resnick)} $\,\,$\\[1mm] $\phantom{abcde}$ \textsf{with}  {\small \begin{align*}\gamma_{\BR}&=\lambda (1-\rho_{\BR}) \\ \rho_{\BR}&=(1-\beta)\rho_{1}+\beta \end{align*}}  \end{minipage}\hspace{0.3cm}}}
\put(1.2,2.2){\begin{minipage}{6cm}{{\small $\rho_{\EBG}=R(\rho_{\EG})$ \\[1mm] \textsf{for} $\alpha \geq 0.5$}}\end{minipage}}
\put(5.8,2.2){\begin{minipage}{6cm}{{\small $\rho_{\EBG}=T_\lambda(\rho_{\BR})$ \\[1mm] \textsf{for} $\lambda \leq 1.8197/(1-\beta)$}}\end{minipage}}
\put(5.8,3.7){\begin{minipage}{6cm}{{\small $\rho_{\EG}=S_\lambda(\rho_{\BR})$ \\[1mm] \textsf{for} $\lambda \leq 4.425098/(1-\beta)$}}\end{minipage}}
\end{picture}
\caption{Intersections of sets of tail correlation functions $\chi$ arising from max-stable processes with long-range dependence as obtained from Proposition~\ref{prop:cortrafo}. The equations in the intersections show that the respective regions are non-empty and  how to transform the defining quantities of the respective processes. Here $R$, $S_\lambda$ and $T_\lambda=R \circ S_\lambda$ are the maps from (\ref{eqn:R}),(\ref{eqn:Slambda}) and (\ref{eqn:Tlambda}). 
} \label{fig:Gaussian_BR}
\vspace{1cm}
\mbox{
\scalebox{0.33}{\includegraphics{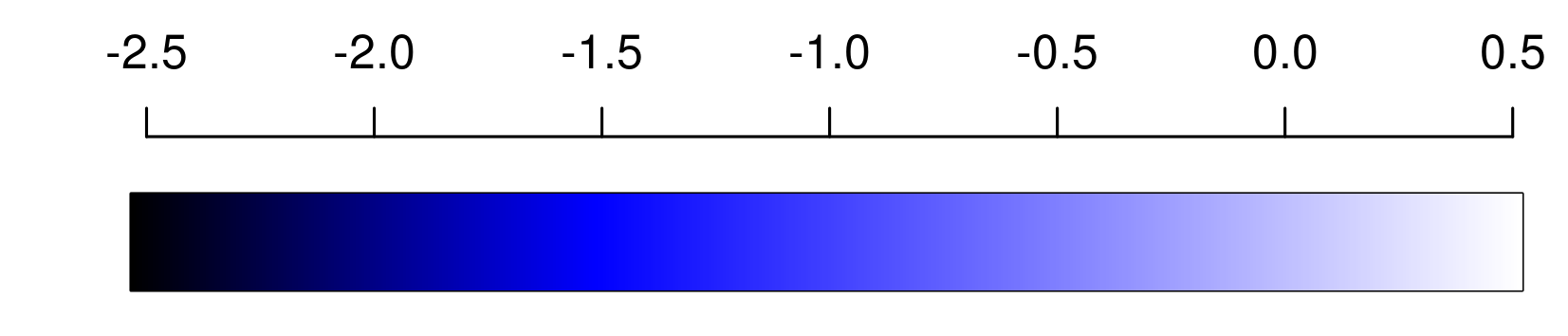}}
\scalebox{0.33}{\includegraphics{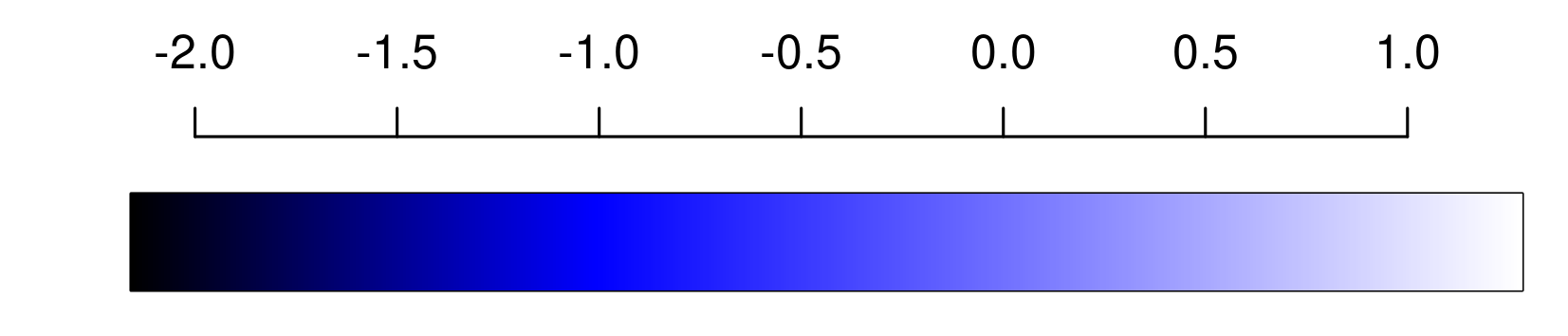}}
\scalebox{0.33}{\includegraphics{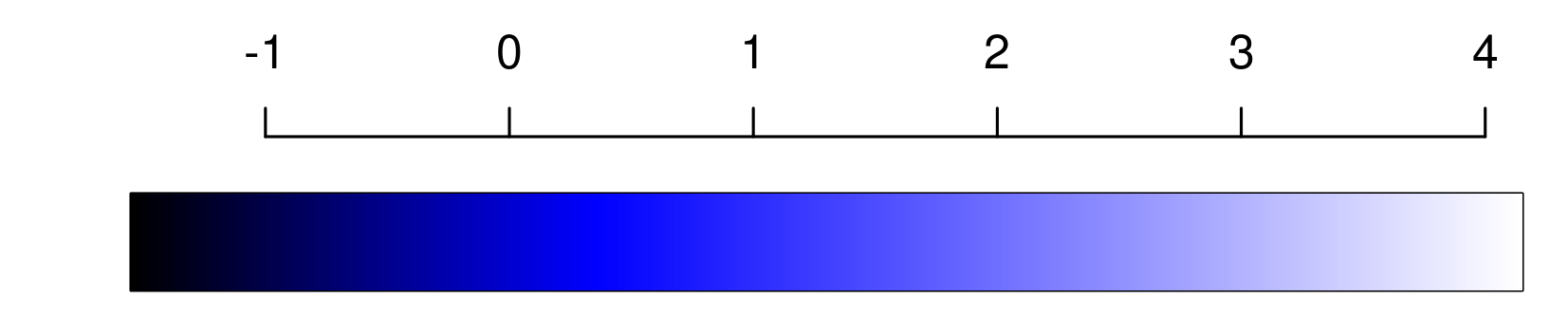}}
}
\mbox{
\scalebox{0.33}{\includegraphics{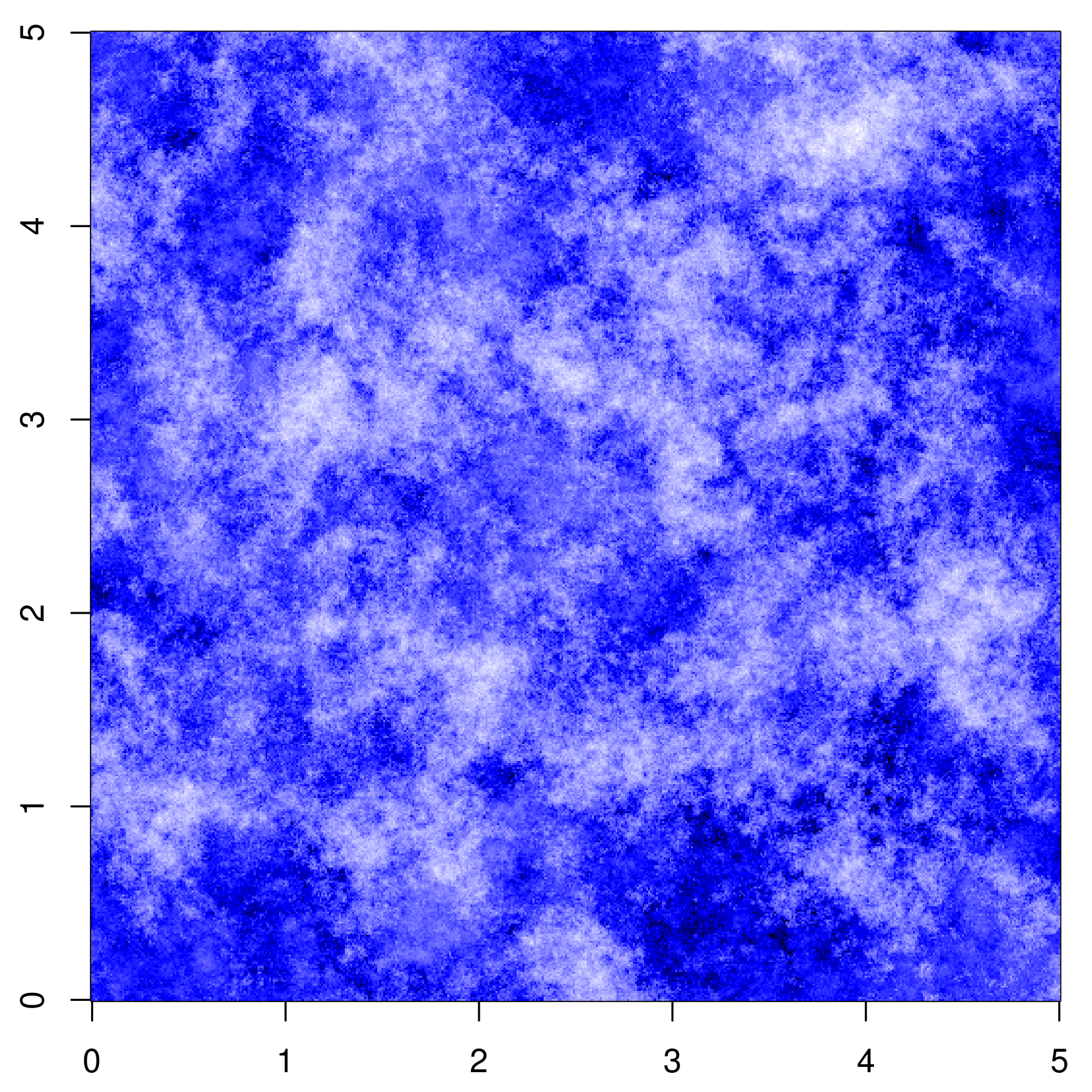}}
\scalebox{0.33}{\includegraphics{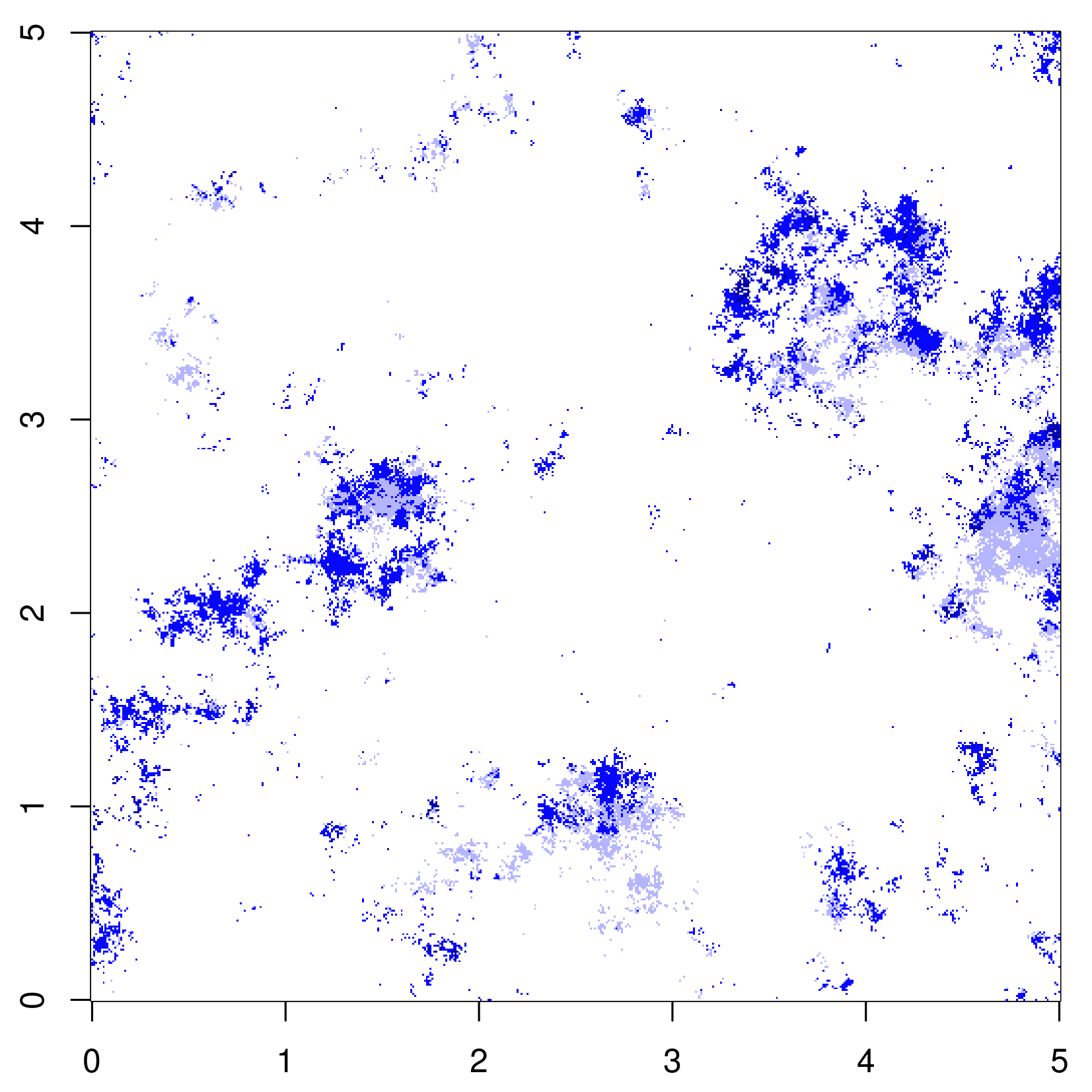}}
\scalebox{0.33}{\includegraphics{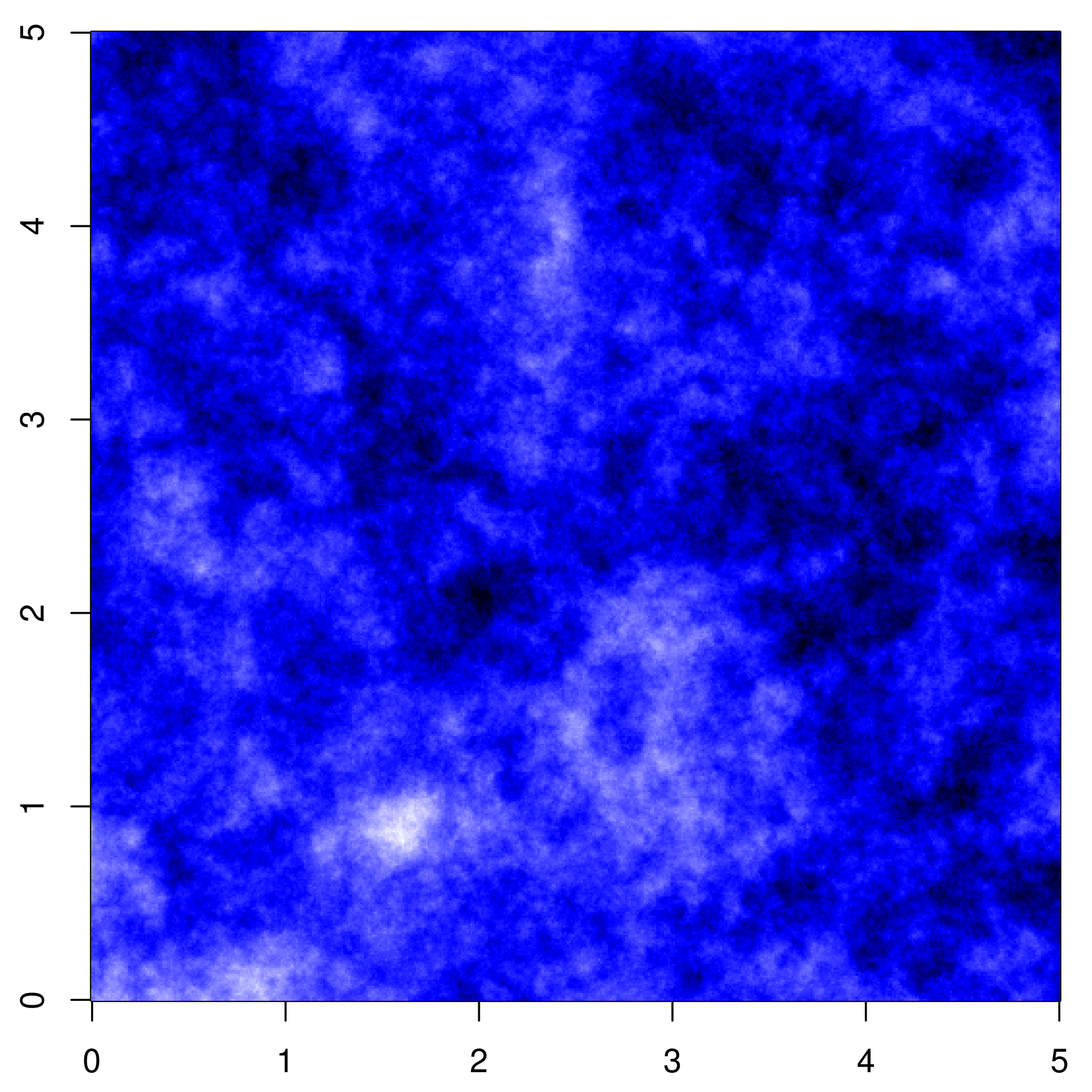}}
}
\hspace{2.5cm} 
\textsf{\textbf{\scriptsize EG process}}
\hspace{2.9cm} 
\textsf{\textbf{\scriptsize EBG process}}
\hspace{2.8cm}
\textsf{\textbf{\scriptsize BR process}}
\caption{Simulations of different processes on $[0,5]^2 \subset \RR^2$ 
with identical tail correlation function $\chi(t)=\erfc(0.45 (1-\exp(-\lVert t \rVert))^{1/2}))$ (see Example~\ref{example:EG_EBG_BR}): extremal Gaussian process (EG), extremal binary Gaussian process (EBG) and Brown-Resnick process (BR).
The plots were transformed to standard Gumbel marginals.
Note that simulations of the EBG process are bound to take their maximum value at a high proportion of the total area, \cf their spectral process (\ref{eqn:EBGspectral}).
} \label{fig:EG_EBG_BR}
\end{figure}

\section{Counterexamples}\label{sec:opscounter}

While the previous Section~\ref{sec:tcf} is concerned with inclusions and intersections of classes of TCFs,
this section provides statements of the form $T^d_{model1} \setminus T^d_{model2} \neq \emptyset$. Exemplarily, we address only mixing processes in this section, \cf Figure~\ref{fig:M3_BR}.

\begin{proposition}\label{prop:MixingNonempty}
We have for all dimensions $d\geq 1$:
\begin{enumerate}[a)]
\item\label{item:BRohneMPS} $\erfc(t^\alpha) \in T^d_{\BR}$ $\Leftrightarrow$ $\alpha \in (0,1]$. In particular, $T^d_{\BR} \setminus T^d_{\MPS} \neq \emptyset$. 
\item\label{item:M3ohneVBR} The class $T^d_{\VBR}$ does not contain functions with compact support.\\ In particular $T^d_{\MMMr} \setminus T^d_{\VBR} \neq \emptyset$.
\end{enumerate}
There exists a dimension $d_0 \in \NN$, such that for all $d \geq d_0$
\begin{enumerate}[c)]
\item\label{item:M3inftyohneBR} 
$T^\infty_{\MMMr} \setminus T^d_{\BR} \neq \emptyset$.
\end{enumerate}
\end{proposition}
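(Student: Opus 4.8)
The plan is to treat the three parts separately, each time reducing the membership question to a characterization of variograms.

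\emph{Part a).} By the \BR{} row of Table~\ref{table:semiparameterTCFs} and the fact that \erfc{} is a strictly decreasing bijection from $[0,\infty)$ onto $(0,1]$, a radial function $\erfc(t^\alpha)$ lies in $T^d_{\BR}$ if and only if the uniquely forced variogram $\gamma(t)=8\lVert t\rVert^{2\alpha}$ is a valid variogram on $\RR^d$. Since a positive constant factor is irrelevant, this amounts to the classical fact that $\lVert t\rVert^{\beta}$ is a variogram on $\RR^d$ precisely for $\beta\in(0,2]$ (L\'evy's fractional Brownian field), so $\erfc(t^\alpha)\in T^d_{\BR}\Leftrightarrow 2\alpha\in(0,2]\Leftrightarrow\alpha\in(0,1]$. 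Proposition~\ref{prop:MixingInclusions}\ref{item:erfcCondition}) states $\erfc(t^\alpha)\in T^d_{\MPS}\cap T^d_{\BR}$ iff $\alpha\in(0,0.5]$; combined with the above, every $\alpha\in(0.5,1]$ produces an element of $T^d_{\BR}\setminus T^d_{\MPS}$.

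\emph{Part b).} By (\ref{eqn:TdVBR}) and (\ref{eqn:TinftyMMMr}) every $\chi\in T^d_{\VBR}$ has the form $\chi(t)=\int_{(0,\infty)}\erfc(s\sqrt{\gamma(t)/8})\,\D G(s)$ for a (necessarily finite) variogram $\gamma$ and a cdf $G$ on $(0,\infty)$. As \erfc{} is strictly positive on all of $\RR$ and $\gamma(t)<\infty$ for every $t$, the integrand is strictly positive for each $s$, whence $\chi(t)>0$ for all $t$; thus no $\chi\in T^d_{\VBR}$ has compact support. For $T^d_{\MMMr}\setminus T^d_{\VBR}\neq\emptyset$ it then suffices to exhibit a compactly supported member of $T^d_{\MMMr}$, and the M3b process with deterministic radius $R$ provides one: its TCF is the normalized intersection volume $\nu_d(B^d_R\cap(t+B^d_R))/(\kappa_d R^d)$, which vanishes for $\lVert t\rVert\geq 2R$.

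\emph{Part c).} As in a), $\chi\in T^d_{\BR}$ if and only if $\gamma_\chi:=8(\erfc^{-1}\circ\chi)^2$ is a variogram on $\RR^d$. Isotropic variograms on $\RR^d$ are nested and decreasing in $d$ (restrict a Gaussian field on $\RR^{d+1}$ to a hyperplane), so once $\gamma_\chi$ fails to be a variogram on some $\RR^{d_1}$ it fails on every $\RR^d$ with $d\geq d_1$. It therefore suffices to find $\chi\in T^\infty_{\MMMr}$ whose $\gamma_\chi$ is not a variogram in \emph{every} dimension. By Schoenberg's theorem the variograms valid in all dimensions are exactly those for which $s\mapsto\gamma_\chi(\sqrt s)$ is a Bernstein function; in particular $s\mapsto\gamma_\chi(\sqrt s)$ must then be concave. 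I would take the two-scale mixture $\chi(t)=p\,\erfc(\varepsilon t)+(1-p)\,\erfc(M t)$ with $0<\varepsilon\ll M$ and $p\in(0,1)$, which lies in $T^\infty_{\MMMr}$ by the first description in (\ref{eqn:TinftyMMMr}), being $\int_{(0,\infty)}\erfc(st)\,\D G(s)$ for $G=p\,\delta_\varepsilon+(1-p)\,\delta_M$. For widely separated scales $\chi$ exhibits a plateau $\chi\approx p$ over the range $1/M\ll t\ll 1/\varepsilon$, so that $g(s):=\gamma_\chi(\sqrt s)=8(\erfc^{-1}(\chi(\sqrt s)))^2$ is nearly flat there, whereas for large $s$ only the $\varepsilon$-component survives and $g(s)\approx 8\varepsilon^2 s$ grows with positive slope. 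Choosing two abscissae on the plateau and one in the large-$s$ regime then makes the secant slopes of $g$ increase, contradicting concavity, so $g$ is not Bernstein, $\gamma_\chi$ is not a variogram in all dimensions, and the nesting argument yields a $d_0$ with $\chi\notin T^d_{\BR}$ for all $d\geq d_0$, while $\chi\in T^\infty_{\MMMr}$.

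The main obstacle is exactly this last verification: one must quantitatively control the composition $\erfc^{-1}\circ\chi$ — the height of the plateau (essentially $\erfc^{-1}(p)$) and the resumed linear growth $\erfc^{-1}(\chi(\sqrt s))\approx\varepsilon\sqrt s$ — uniformly enough to certify the failure of concavity, which in turn dictates how widely the scales $\varepsilon$ and $M$ must be separated. The remaining ingredients are routine: the inversion of the \erfc{} relation, the classical power-variogram and Schoenberg characterizations, and Proposition~\ref{prop:MixingInclusions}.
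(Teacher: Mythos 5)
Parts a) and b) of your proposal are correct and essentially identical to the paper's own arguments: a) is exactly the reduction to validity of the forced fractal-Brownian variogram $8\lVert t\rVert^{2\alpha}$ together with Proposition~\ref{prop:MixingInclusions}\ref{item:erfcCondition}), and in b) your strict-positivity observation is the same obstruction the paper phrases as ``the involved variogram would have to take the value $\infty$ outside a compact region,'' with your deterministic-radius M3b example being precisely the paper's compactly supported function $h_d$.

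Part c) has a genuine gap. Your overall architecture (two-scale $\erfc$-mixture, Schoenberg/Bernstein reduction, dimension nesting) matches the paper, whose example is itself the mixture $0.25\,\erfc(t)+0.75\,\erfc(5t)$. But the paper derives the contradiction one derivative higher than you do: it shows that $\psi''$ has an interior local minimum, so that $\psi'''$ takes negative values and $\psi'$ fails to be completely monotone — \emph{not} that $\psi$ fails to be concave. Your concavity certificate is in fact false: the plateau of $\chi$ at level $p$ does not translate into a nearly flat stretch of $g(s)=8\bigl[\erfc^{-1}(\chi(\sqrt{s}))\bigr]^2$. In the regime where only the $\varepsilon$-component is active (the $M$-component contributing an exponentially negligible correction), set $u=\varepsilon\sqrt{s}$ and $y=\erfc^{-1}(p\,\erfc(u))>u$; then the chain rule together with $\erfc(y)=p\,\erfc(u)$ gives
\begin{align*}
g'(s)=8\varepsilon^2\,\frac{q(y)}{q(u)}, \qquad q(x):=\sqrt{\pi}\,x\,e^{x^2}\erfc(x),
\end{align*}
and $q$ is strictly increasing on $[0,\infty)$ (this follows from the standard bound $\erfc(x)>\tfrac{2}{\sqrt{\pi}}\,e^{-x^2}/(x+\sqrt{x^2+2})$). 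Hence $g'(s)>8\varepsilon^2$ throughout the plateau region and for all larger $s$, with $g'\downarrow 8\varepsilon^2$ from above as $s\to\infty$. Consequently every secant slope of $g$ on this branch exceeds the limiting slope $8\varepsilon^2$, the slope never dips below the asymptotic one, and the three-point comparison you propose (small plateau secant versus large far-field secant) cannot produce an increasing sequence of secant slopes: $g$ is, to all appearances, globally concave for such mixtures. So the verification you flag as ``the main obstacle'' cannot be completed as sketched; the failure of the Bernstein property must be certified at third order — e.g.\ by exhibiting non-monotonicity of $g''$, equivalently an interior local minimum of $\psi''$ — which is exactly the route the paper takes.
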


Moreover,  one might get the impression that any continuous radial TCF on $\RR^d$ that is non-increasing and convex on $[0,\infty)$ and that vanishes at $\infty$ belongs already to the class $T^d_{\MMMr}$ or at least appears already in Figure~\ref{fig:M3_BR}. This is true for $d=1$ since $T^1_{\MMMr}$ comprises all of these functions. The following two operations, however, yield counterexamples for $d \geq 3$, as we shall see in Section~\ref{sec:cexamples}.
Firstly, the \emph{turning bands operator} has been inspired by \cite{kabluchkostoev_12} and is well-known in the context of isotropic Gaussian processes.
Secondly, the \emph{multiplication with the class} $T^d_{\MMMr}$ can shorten the range of tail dependence to a compact set.
Both operations are derived from construction principles for the corresponding max-stable processes that can be applied to (almost arbitrary) spectral representations.

\subsection{Turning bands}

\paragraph{The turning bands operator} 
Let $k,d \in \NN$ with $1\leq k \leq d$. The set of ordered tuples $(x_1,\dots,x_k)$ of $k$ orthonormal vectors in $\RR^d$ is known as the \emph{Stiefel manifold of orthonormal $k$-frames in $\RR^d$} (\cf \eg \cite[\page 131]{nachbin_76}) and denoted $V_k(\RR^d)$. If we interpret the vectors $x_1,\dots,x_k$ as columns of a matrix, we identify
\begin{align}\label{eqn:Stiefel}
V_k(\RR^d)=\{ A \in \RR^{d \times k} \pmid A^\trans A = \Eins_{k\times k} \},
\end{align}
where $A^\trans$ denotes the transpose of $A$ and $\Eins_{k\times k}$ the identity matrix in $\RR^{k \times k}$. A matrix $A \in V_k(\RR^d)$ embeds $\RR^k$ linearly and isometrically into $\RR^d$, whereas $A^\trans$ applied to a vector $t \in \RR^d$ is a vector in $\RR^k$ whose coordinates can be interpreted as the coordinates of the projection of $t$ onto $A(\RR^k)$ with respect to the orthonormal frame defined by the columns of $A$. For $k=1$ the Stiefel manifold is simply the sphere $V_1(\RR^d)=S^{d-1}$, and for $k=d$ the orthogonal group $V_d(\RR^d)=\orth(d)$.
In view of (\ref{eqn:Stiefel})  the Stiefel manifold $V_k(\RR^d)$ is a {compact submanifold} of $\RR^{d \times k}$. The action of the orthogononal group $\orth(d)$ (from the left) exhibits $V_k(\RR^d)$ as a {locally compact homogeneous space} on which a unique normalized left invariant \emph{Haar measure} $\sigma^d_k$ can be defined \cite[\page 142 Example~4]{nachbin_76}, which we call \emph{uniform distribution} \cite{mardiakhatri_77,juppmardia_79}. 

By $C(\RR^d)$ (resp.\ $C(\RR^k)$) we denote the set of real-valued continuous functions on $\RR^d$ (resp.\ $\RR^k$).  Since $V_k(\RR^d)$ is compact, the following operator $\tb^d_k$, which we call \emph{turning bands operator}, is well-defined:
\begin{align*}
\tb^d_k: C(\RR^k) \rightarrow C(\RR^d), \qquad \tb^d_k(f)(t):=\int_{V_{k}(\RR^d)} f\left(A^\trans(t)\right) \, \sigma^d_k(\D A).
\end{align*}
Moreover, it is compatible with compositions (see Lemma~\ref{lemma:Stiefelcomposition})
\begin{align}\label{eqn:TBcomposition}
\tb_{k_1}^{k_2} \circ \tb_{k_2}^{k_3} = \tb_{k_1}^{k_3} \qquad \text{for $k_1 \leq k_2 \leq k_3$.}
\end{align}
In the context of Gaussian processes and positive definite functions, the turning bands operator $\tb^d_1$ is a familiar operator, see \cite{matheron_73,gneiting_99a,gneiting_99b,schlather_12,zucastell_02}, where explicit formulae and recurrence relations are provided. 
Let $\Phi_d$ denote the set of radially symmetric continuous correlation functions on $\RR^d$. Then it is well-known that $\tb_1^d$ yields a bijection between $\Phi_1$ and $\Phi_d$. In view of (\ref{eqn:TBcomposition}) this implies that $\tb_k^d$ is a bijection between $\Phi_k$ and $\Phi_d$. The operator $\tb_k^d$ for arbitrary $k,d \in \NN$ with $k \leq d$ is usually implicitly addressed as $\tb_1^d \circ (\tb_1^k)^{-1}$ in the references above. Because of these bijections, the \emph{turning bands method} is an important tool for the simulation of stationary isotropic Gaussian processes.
In the context of max-stable processes and their TCFs the situation transfers to the following extent.

\paragraph{The turning bands method for max-stable processes} 

Let $X$ be a stochastically continuous simple max-stable process on $\RR^k$. 
Then the process $X$ has a spectral representation as in (\ref{eqn:spectralrep})
\begin{align}\label{eqn:baseprocess}
X_t= \Max_{n=1}^\infty U_n V_t(\omega_n), \qquad t \in \RR^k,
\end{align}
where $(U_n,\omega_n)$ denotes a Poisson point process on $\RR_+ \times \Omega$ with intensity $u^{-2} \D u \, \nu(\D\omega)$ and the spectral function $V_t(\omega)$ is \emph{jointly measurable} in the variables $t \in \RR^k$ and $\omega \in \Omega$. Based on this representation we define another simple max-stable process $Y$ on $\RR^d$ with $d \geq k$ as follows.
Let  $(U_n,\omega_n,A_n)$ be a Poisson point process on $\RR_+ \times \Omega \times V_k(\RR^d)$  of intensity $u^{-2}\D u \, \nu(\D \omega)\, \sigma^d_k(\D A)$, where  $\sigma^d_k(\D A)$ is the uniform distribution on the Stiefel manifold $V_k(\RR^d)$. Set
\begin{align}\label{eqn:TBprocess}
Y_t := \Max_{n=1}^\infty U_n V_{A_n^\trans (t) }(\omega_n), \qquad t \in \RR^d.
\end{align}
Then $Y$ is a simple max-stable process on $\RR^d$ with the following properties.

\begin{lemma}\label{lemma:TBmethod}
Let $X$ and $Y$ be simple max-stable processes as given by (\ref{eqn:baseprocess}) and (\ref{eqn:TBprocess}), respectively.
\begin{enumerate}[a)]
\item If $X$ is stationary, then $Y$ is stationary.
\item For any $G \in \orth(d)$ the law of $\{Y_{G(t)}\}_{t \in \RR^d}$ and the law of $Y$ coincide.
\item Let $X$ be stationary. The (radial) TCF $\chi^{(Y)}$ of the stationary isotropic process $Y$ can be expressed in terms of the TCF $\chi^{(X)}$ of $X$ by 
\begin{align*}
\chi^{(Y)} = \tb^d_k(\chi^{(X)}).
\end{align*}
\end{enumerate}
\end{lemma}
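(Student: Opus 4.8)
The plan is to prove the three parts in order, exploiting the structure of the turning bands construction (\ref{eqn:TBprocess}) where the extra randomness lives in the isotropically chosen frame $A_n \in V_k(\RR^d)$. First I would establish stationarity of $Y$ in part a). The key idea is that for a shift $t \mapsto t+h$ with $h \in \RR^d$, each spectral function transforms as $V_{A^\trans(t+h)}(\omega) = V_{A^\trans(t) + A^\trans(h)}(\omega)$, so the frame $A$ turns a shift in $\RR^d$ into a shift by $A^\trans(h) \in \RR^k$ in the base process $X$. Since $X$ is stationary, for each fixed $A$ the shifted spectral family has the same finite-dimensional structure as the unshifted one (up to the measure-preserving relabelling of $\Omega$ guaranteed by stationarity of $X$), and integrating over the Poisson point process in $(U_n,\omega_n,A_n)$ preserves the law. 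I would make this rigorous using the finite-dimensional distribution formula (\ref{eqn:fddspectral}): the exponent $\int \Max_k V_{A^\trans(t_k+h)}(\omega)/x_k \, \nu(\D\omega)\,\sigma^d_k(\D A)$ equals, after using stationarity of $X$ inside the $\omega$-integral for each fixed $A$, the corresponding exponent with $h=0$.

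For part b), I would argue that the law of $Y$ is invariant under the orthogonal group. The crucial observation is that for $G \in \orth(d)$ the substitution $A \mapsto GA$ maps $V_k(\RR^d)$ onto itself and leaves the Haar measure $\sigma^d_k$ invariant (this is precisely the left $\orth(d)$-invariance used to define $\sigma^d_k$). Under this substitution $V_{(GA)^\trans(Gt)}(\omega) = V_{A^\trans G^\trans G t}(\omega) = V_{A^\trans(t)}(\omega)$ because $G^\trans G = \Eins$, so the Poisson point process defining $\{Y_{Gt}\}$ has the same intensity and the same functional form as that defining $\{Y_t\}$. Together with stationarity from a), this gives that $Y$ is isotropic, which justifies treating $\chi^{(Y)}$ as a radial function.

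For part c), I would compute the TCF directly from the spectral formula (\ref{eqn:chispectral}) applied to the representation (\ref{eqn:TBprocess}) on the product space $\Omega \times V_k(\RR^d)$ with measure $\nu \times \sigma^d_k$. This yields
\begin{align*}
\chi^{(Y)}(t) = \int_{V_k(\RR^d)} \int_\Omega V_{A^\trans(t)}(\omega) \wedge V_{A^\trans(o)}(\omega)\, \nu(\D\omega)\, \sigma^d_k(\D A).
\end{align*}
Since $A^\trans(o) = o$ in $\RR^k$, the inner integral is exactly $\chi^{(X)}(A^\trans(t))$ by (\ref{eqn:chispectral}) applied to $X$, so that $\chi^{(Y)}(t) = \int_{V_k(\RR^d)} \chi^{(X)}(A^\trans(t))\, \sigma^d_k(\D A) = \tb^d_k(\chi^{(X)})(t)$ by the definition of the operator. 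The main obstacle is the measure-theoretic bookkeeping in parts a) and b): one must verify that the superposition of Poisson point processes over the enlarged phase space $\RR_+ \times \Omega \times V_k(\RR^d)$ is genuinely invariant under the relevant transformations, and in part a) one must carefully invoke the stationarity of $X$ in a way that commutes with the $\sigma^d_k$-integration. Handling this correctly hinges on the joint measurability of $V_t(\omega)$ in $(t,\omega)$, which is assumed in (\ref{eqn:baseprocess}), and on Fubini's theorem to interchange the $\nu$- and $\sigma^d_k$-integrals; once measurability is in place, the computation in c) is essentially a one-line application of (\ref{eqn:chispectral}).
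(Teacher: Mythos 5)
Your proposal is correct and follows essentially the same route as the paper's proof: part a) via shift-invariance of the finite-dimensional exponent $\int_{V_k(\RR^d)}\int_\Omega \bigl(\Max_{t\in M} V_{A^\trans t}(\omega)/x_t\bigr)\,\nu(\D\omega)\,\sigma^d_k(\D A)$ for each fixed $A$ using linearity of $A^\trans$ and stationarity of $X$, part b) via left $\orth(d)$-invariance of the Haar measure $\sigma^d_k$ together with $(GA)^\trans G = A^\trans$, and part c) as a direct application of (\ref{eqn:chispectral}) on the product space with $A^\trans(o)=o$. The paper states these three steps almost without elaboration, so your version is simply the same argument with the details filled in (only your parenthetical about a ``measure-preserving relabelling of $\Omega$'' is an unnecessary heuristic, which your subsequent exponent-based formulation correctly replaces).
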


\begin{proposition}\label{prop:ctsTB}
If $\chi$ is a continuous TCF on $\RR^k$, then $\tb_k^d(\chi)$ is a continuous TCF on $\RR^d$.
\end{proposition}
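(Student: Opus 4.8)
The plan is to realise $\chi$ as the tail correlation function of a suitable max-stable process on $\RR^k$, push it forward to $\RR^d$ by the turning bands construction (\ref{eqn:TBprocess}), and read off the TCF of the resulting process from Lemma~\ref{lemma:TBmethod}. Continuity of $\tb^d_k(\chi)$ is the easy part: since $\chi$ is bounded ($0\leq \chi \leq 1$) and continuous, dominated convergence applied to $t \mapsto \int_{V_k(\RR^d)} \chi(A^\trans t)\,\sigma^d_k(\D A)$ shows $\tb^d_k(\chi)\in C(\RR^d)$, which is precisely the well-definedness of $\tb^d_k\colon C(\RR^k)\to C(\RR^d)$ recorded above. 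The real content is that $\tb^d_k(\chi)$ is again a TCF.

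First I would fix a stationary simple (\ie standard Fr\'echet) max-stable process $X$ on $\RR^k$ with $\chi^{(X)}=\chi$; this is possible because every TCF is realised as the TCF of a max-stable process. To invoke Lemma~\ref{lemma:TBmethod} I must check that $X$ can be taken stochastically continuous and equipped with a jointly measurable spectral representation as in (\ref{eqn:baseprocess}), and this is exactly where the continuity hypothesis on $\chi$ is used. By (\ref{eqn:fddspectral}) together with $\int_\Omega V_{t'}\,\nu(\D\omega)=1$, the bivariate margins satisfy $-\log \PP(X_s\leq x, X_t\leq x) = (2-\chi(t-s))/x$ for $x>0$, so a short inclusion--exclusion gives
\begin{align*}
\PP\big(\{X_s>x\}\,\triangle\,\{X_t>x\}\big)=2\big(e^{-1/x}-e^{-(2-\chi(t-s))/x}\big).
\end{align*}
Since $\chi$ is continuous with $\chi(o)=1$, the right-hand side tends to $0$ as $t\to s$ for every $x>0$. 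Passing to the uniformized margins $U_t:=e^{-1/X_t}$ and using the layer-cake identity $\EE|U_t-U_s|=\int_0^1 \PP(\{U_s>u\}\,\triangle\,\{U_t>u\})\,\D u$ with dominated convergence, I obtain $U_t\to U_s$ in $L^1$, hence $X_t\to X_s$ in probability; by stationarity this holds at every point, so $X$ is stochastically continuous.

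A stochastically continuous simple max-stable process is separable in probability and therefore admits a jointly measurable spectral representation of the form (\ref{eqn:baseprocess}) (\cf \cite{dehaan_84,stoev_08,kabluchko_09}). The turning bands construction (\ref{eqn:TBprocess}) then applies and yields a stationary, indeed isotropic, simple max-stable process $Y$ on $\RR^d$. By Lemma~\ref{lemma:TBmethod}\,a) and c), $Y$ is stationary and $\chi^{(Y)}=\tb^d_k(\chi^{(X)})=\tb^d_k(\chi)$. Hence $\tb^d_k(\chi)$ is the TCF of a stationary max-stable process on $\RR^d$, \ie $\tb^d_k(\chi)\in T^d$, and it is continuous by the first paragraph; this proves the proposition.

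The main obstacle is the realisation step: securing the hypotheses of Lemma~\ref{lemma:TBmethod}, namely stochastic continuity together with a jointly measurable spectral representation, for a process carrying the prescribed continuous TCF. This is the only place where continuity of $\chi$ enters in an essential way. Once it is in place, both the invariance of continuity under $\tb^d_k$ and the transfer $\chi^{(Y)}=\tb^d_k(\chi)$ are routine consequences of the material already established.
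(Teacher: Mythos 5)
Your proof is correct, but at the crucial step it takes a genuinely different route from the paper. Both arguments reduce the proposition, via Lemma~\ref{lemma:TBmethod}, to producing a \emph{stochastically continuous} stationary simple max-stable process on $\RR^k$ with TCF $\chi$. The paper gets this in one stroke from the Tawn--Molchanov machinery: given a max-stable realization $X$ of $\chi$ with ECF $\theta(\{o,h\})=2-\chi(h)$, it replaces $X$ by the associated Tawn--Molchanov process $X^*$ of \cite[Theorem~8]{strokorbschlather_13}, which has the same TCF and is stochastically continuous by \cite[Theorem~25]{strokorbschlather_13}. You instead prove directly that \emph{any} stationary simple max-stable realization of a continuous $\chi$ is already stochastically continuous, via the identity
\begin{align*}
\PP\big(\{X_s>x\}\,\triangle\,\{X_t>x\}\big)=2\big(e^{-1/x}-e^{-(2-\chi(t-s))/x}\big)
\end{align*}
(which only uses the bivariate margin at equal thresholds, hence only $\theta(t-s)$), followed by the layer-cake/dominated-convergence argument on the uniformized margins and the continuous mapping theorem; this computation is sound, and it renders the detour through $X^*$ unnecessary. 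What each approach buys: yours replaces the citation of \cite[Theorem~25]{strokorbschlather_13} by a short, self-contained and rather instructive argument showing that for max-stable processes continuity of the TCF at the origin already forces stochastic continuity; the paper's version is shorter on the page and leans on ready-made results. Note, however, that you do not fully escape \cite{strokorbschlather_13}: your opening assertion that every (continuous) TCF is realised by \emph{some} stationary max-stable process is precisely the Tawn--Molchanov realization theorem, which the paper's proof uses for the same purpose. One cosmetic point: you invoke (\ref{eqn:fddspectral}) to compute the bivariate margin before a spectral representation has been secured; this is harmless, since the bivariate law of a max-stable pair with standard Fr\'echet margins satisfies $-\log\PP(X_s\leq x,\,X_t\leq x)=(2-\chi(t-s))/x$ by finite-dimensional max-stability alone, but the derivation should be phrased that way to avoid the appearance of circularity with the later step where stochastic continuity is used to obtain the jointly measurable representation (\ref{eqn:baseprocess}).
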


\begin{remark}
Contrary to correlation functions, not all radially symmetric continuous TCFs on $\RR^d$ arise as $\tb_k^d(\chi)$ for some TCF $\chi$ on $\RR^k$. 
As a counterexample consider the identity 
\begin{align*}
\exp(-t)=\tb_1^3(f)(t) \qquad \text{ with } \qquad f(t)=\frac{\D}{\D t}\left(t \exp(-t) \right)=(1-t)\exp(-t)
\end{align*} 
(\cf \cite[(2.22)]{schlather_12}). While the completely monotone function $\exp(-t)$ is a valid radial TCF on $\RR^3$, the function $f$ cannot be a TCF on $\RR$ since $f$ attains negative values.
\end{remark}

\begin{remark}
The turning bands method is compatible with iterations in the following sense: Let  $q \geq d$ and construct a process $Z$ on $\RR^q$ from the spectral representation of $Y$ on $\RR^d$ by 
\begin{align*}
Z_t := \Max_{n=1}^\infty U_n V_{B_n^\trans \circ A_n^\trans (t) }(\omega_n) = \Max_{n=1}^\infty U_n V_{(A_n \circ B_n)^\trans (t) }(\omega_n), \qquad t \in \RR^q,
\end{align*}
where $(U_n,\omega_n,A_n,B_n)$ is a Poisson point process on $\RR_+ \times \Omega \times V_k(\RR^d) \times V_d(\RR^q)$  with  intensity $u^{-2} \D u \, \nu(\D \omega) \, \sigma^d_k(\D A) \, \sigma^q_d(\D B)$. 
Then $Z=\{Z_t\}_{t \in \RR^q}$ has the same law as 
\begin{align*}
\left\{\Max_{n=1}^\infty U_n V_{C_n^\trans (t) }(\omega_n)\right\}_{t \in \RR^q},
\end{align*}
where $(U_n,\omega_n,C_n)$ is a Poisson point process with  intensity $u^{-2} \D u \, \nu(\D \omega) \, \sigma^q_k(\D C)$ (see Lemma~\ref{lemma:Stiefelcomposition}). Thus, the process $Z$ can be constructed directly from the spectral representation of $X$ without involving $Y$ as a step in between. 
\end{remark}

\subsection{Multiplication with the class $T^d_{\MMMr}$}

Let $X$ be a  stochastically continuous max-stable process on $\RR^d$ with spectral representation as in (\ref{eqn:baseprocess}) with $k=d$ and let $\{B(t)\}_{t \in \RR^d}$ be a measurable process on $\RR^d$ taking values in $\{0,1\}$. We denote the probability space corresponding to $B$ by $(\Omega_B,{\mathcal A}_B,\PP_B)$ and expectation \wrt $\PP_B$ by $\EE_B$.
Further, we require that
\begin{align*}
c_B:=\int_{\RR^d} B(t) \,\D t \in (0,\infty)   
\end{align*}
holds $\PP_B$-almost surely.
Based on these two processes $X$ and $B$ we define another simple max-stable process $Y$ on $\RR^d$ by
\begin{align}\label{eqn:multprocess}
Y_t := \Max_{n=1}^\infty U_n \frac{B_n(t-z_n)}{c_{B_n}} V_{t}(\omega_n), \qquad t \in \RR^d,
\end{align}
where $(U_n,\omega_n,z_n,B_n)$ is a Poisson point process on $\RR_+ \times \Omega \times \RR^d \times \Omega_B$  with intensity $u^{-2} \D u \times \nu \times \nu_d \times \PP_B $.

\begin{lemma}\label{lemma:multmethod}
Let $X$ and $Y$ be simple max-stable processes as given by (\ref{eqn:baseprocess}) for $k=d$ and (\ref{eqn:multprocess}) respectively.
\begin{enumerate}[a)]
\item If $X$ is stationary, then $Y$ is stationary.
\item Let $X$ be stationary. The TCF $\chi^{(Y)}$ of the stationary  process $Y$ can be expressed in terms of the TCF $\chi^{(X)}$ of $X$ by 
\begin{align*}
\chi^{(Y)}(t)= \EE_B\left[\frac{\int_{\RR^d} B(z) B(z-t) \D z}{\int_{\RR^d} B(z) \D z}\right]  \, \chi^{(X)}(t), \qquad t \in \RR^d.
\end{align*}
\end{enumerate}
\end{lemma}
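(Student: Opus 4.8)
The goal is to establish Lemma~\ref{lemma:multmethod}, which describes how multiplying a max-stable process $X$ by an independent random $\{0,1\}$-valued ``masking'' process $B$ affects stationarity and the TCF.

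For part a), the plan is to show that the Poisson point process $(U_n,\omega_n,z_n,B_n)$ on $\RR_+ \times \Omega \times \RR^d \times \Omega_B$ that defines $Y$ in (\ref{eqn:multprocess}) has an intensity measure that is invariant under the shift $t \mapsto t+s$. First I would observe that a shift of the index $t$ acts on the spectral representation only through the $z_n$-coordinate and through the shift of the base spectral functions $V_t(\omega_n)$. The key point is that the intensity $u^{-2}\D u \times \nu \times \nu_d \times \PP_B$ factorizes, with $\nu_d$ (Lebesgue measure) being translation invariant in the $z$-variable, and that by stationarity of $X$ the law of $\{V_{t+s}(\omega)\}_t$ can be matched with $\{V_t(\omega)\}_t$ after a measure-preserving transformation of $(\Omega,\nu)$. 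Substituting $z_n \mapsto z_n - s$ absorbs the translation of the masking factor $B_n(t-z_n)$, and the translation invariance of Lebesgue measure ensures the transformed point process has the same distribution. Hence $\{Y_{t+s}\}_t \stackrel{\distr}{=} \{Y_t\}_t$, which is stationarity.

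For part b), the plan is to read off the TCF of $Y$ directly from the spectral formula (\ref{eqn:chispectral}), since $Y$ is itself a simple max-stable process whose spectral functions are $\widetilde V_t(\omega,z,b) := \frac{b(t-z)}{c_b} V_t(\omega)$ over the enlarged index space $\Omega \times \RR^d \times \Omega_B$ with measure $\nu \times \nu_d \times \PP_B$. First I would verify the normalization $\int \widetilde V_t \, \D(\nu \times \nu_d \times \PP_B) = 1$, which follows because $\int_{\RR^d} \frac{b(t-z)}{c_b}\,\D z = 1$ for each fixed $b$ by the definition of $c_b$, and then the $\nu$-integral of $V_t$ is $1$ by assumption on $X$. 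Then I would apply (\ref{eqn:chispectral}) to write
\begin{align*}
\chi^{(Y)}(t) = \int_{\Omega_B}\int_{\RR^d}\int_{\Omega} \frac{B(t-z)}{c_B} V_t(\omega) \wedge \frac{B(-z)}{c_B} V_o(\omega)\,\nu(\D\omega)\,\D z\,\PP_B(\D\omega_B).
\end{align*}
The crucial algebraic step is that the masking factors $\frac{B(t-z)}{c_B}$ and $\frac{B(-z)}{c_B}$ are nonnegative constants (not depending on $\omega$) that can be pulled out of the minimum; since $B$ is $\{0,1\}$-valued, the product $B(t-z)B(-z)$ equals the indicator that both factors are $1$, so $\frac{B(t-z)}{c_B} V_t \wedge \frac{B(-z)}{c_B} V_o = \frac{B(t-z)B(-z)}{c_B^2}\,(V_t \wedge V_o)$. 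Carrying out the $\nu$-integral reproduces $\chi^{(X)}(t)$, and the remaining $z$- and $B$-integrals give $\EE_B\big[c_B^{-2}\int_{\RR^d} B(t-z)B(-z)\,\D z\big]$; substituting $z \mapsto -z$ and using $c_B = \int B(z)\,\D z$ yields exactly the claimed prefactor.

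The main obstacle is justifying the interchange of the minimum with the deterministic masking factors and making the use of the $\{0,1\}$-valuedness of $B$ rigorous: this is what lets $B(t-z)B(-z)$ appear as a single indicator and what reduces the $\omega$-integral to $\chi^{(X)}(t)$ exactly, with the $B$-dependence cleanly separated into a multiplicative factor. The stationarity argument in part a) also requires care in handling the measure-preserving transformation on $(\Omega,\nu)$ coming from the stationarity of $X$ simultaneously with the Lebesgue shift in $z$, but once the intensity is seen to factorize this is routine.
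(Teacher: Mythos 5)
Your part b) follows the paper's route (apply (\ref{eqn:chispectral}) to the enlarged spectral representation $\widetilde V_t=\frac{B(t-z)}{c_B}V_t(\omega)$ and exploit that $B$ is $\{0,1\}$-valued), but your key algebraic step is wrong: you assert
\begin{align*}
\frac{B(t-z)}{c_B}V_t \wedge \frac{B(-z)}{c_B}V_o = \frac{B(t-z)B(-z)}{c_B^{2}}\,(V_t\wedge V_o).
\end{align*}
The minimum is positively homogeneous, not multiplicative: the common scalar $1/c_B$ factors out \emph{once}, so the correct identity is $\frac{B(t-z)}{c_B}V_t \wedge \frac{B(-z)}{c_B}V_o = \frac{B(t-z)B(-z)}{c_B}\,(V_t\wedge V_o)$, which is precisely the paper's one-line observation that $b_1v_1\wedge b_2v_2=b_1b_2(v_1\wedge v_2)$ for $b_i\in\{0,1\}$. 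With your $c_B^{-2}$ the computation yields $\EE_B\left[c_B^{-2}\int_{\RR^d}B(z)B(z-t)\,\D z\right]\chi^{(X)}(t)$, and your closing claim that substituting $z\mapsto -z$ and using $c_B=\int_{\RR^d}B(z)\,\D z$ ``yields exactly the claimed prefactor'' is false: $c_B^{-2}\int B(z)B(z-t)\,\D z$ and $c_B^{-1}\int B(z)B(z-t)\,\D z$ differ whenever $c_B$ is not almost surely equal to $1$. As written, your derivation proves a different (incorrect) formula and silently reconciles it with the target; the fix is simply the homogeneity of $\wedge$.

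Part a) also contains a genuine gap: you invoke ``a measure-preserving transformation of $(\Omega,\nu)$'' matching the law of $\{V_{t+s}\}_t$ with that of $\{V_t\}_t$, but stationarity of $X$ does not provide this. The paper itself notes that stationarity of the spectral process is sufficient but not necessary for stationarity of $X$; concretely, for the Brown--Resnick representation $V_t=\exp(W_t-\sigma^2(t)/2)$ one has $\int_\Omega V_t^2\,\nu(\D\omega)=e^{\sigma^2(t)}$, which is not shift-invariant, so no measure-preserving matching on $(\Omega,\nu)$ can exist. What stationarity does give is invariance of the \emph{exponent measure}, i.e.\ of the law of the Poisson point process of functions $\{U_nV_\cdot(\omega_n)\}$, where the scale invariance of the intensity $u^{-2}\,\D u$ is essential; your argument could be repaired at that level, combined with your (correct) substitution $z_n\mapsto z_n-s$ absorbing the shift of the mask. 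The paper sidesteps all of this by computing with finite-dimensional functionals: for fixed $z$ and realization $B$, the mask values $B(t-z)\in\{0,1\}$ merely delete terms from the maximum, so stationarity of $X$ gives $\int_\Omega \Max_{t\in M} B(t-z)V_{t+h}(\omega)/x_t\,\nu(\D\omega)=\int_\Omega \Max_{t\in M} B(t-z)V_{t}(\omega)/x_t\,\nu(\D\omega)$ directly, after which the Lebesgue substitution $z\mapsto z+h$ finishes the argument.
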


\begin{example}\label{example:MultGneiting}
If the process $B$ on $\RR^d$ is chosen to be the indicator function 
$B(t):=\eins_{\lVert t \rVert \leq R}$ of the ball $B^d_{R}$ for a random radius  $R \in (0,\infty)$, then the function
\begin{align*}
t \mapsto \EE_B\left[\frac{\int_{\RR^d} B(z) B(z-t) \D z}{\int_{\RR^d} B(z) \D z}\right]  = \EE_R \int_{\RR^d}  \frac{\eins_{\lVert z\rVert \leq R} \wedge \eins_{\lVert z-t\rVert \leq R}}{\kappa_d R^d}\,\D z
\end{align*}
depends on $\lVert t \rVert$ only and belongs to the class $T^d_{\MMMr}=T^d_{\MMMb}$ corresponding to the random radius $R$ of an M3b process (cf.\ Table~\ref{table:semiparameterTCFs}).
\end{example}

\begin{remark}
From \cite[Proposition~3.3.1 c)]{strokorb_13} it is already known that multiplication of TCFs on some space yields again TCFs on the same space. The advantage here is the explicit construction of a max-stable process from a given spectral representation. Note that Lemma~\ref{lemma:multmethod} generalizes a construction described in \cite[\page 39]{schlather_02}. 
\end{remark}

\subsection{Examples}\label{sec:cexamples}


Let us denote  
\begin{align*}
T^d_r := \left\{ \chi: [0,\infty) \rightarrow [0,1] \,\, \middle| \,\, \begin{array}{l} \text{$\chi$ continuous radial TCF on $\RR^d$ that is}  \\ \text{convex in the radius and vanishes at $\infty$} \end{array} \right\} = T^d \cap T^1_{\MMMr}.
\end{align*} 
First, we provide {for each $d\geq 3$} an example of a TCF $\varphi_d \in T^d_r \setminus T^d_{\MMMr}$. To this end, consider the \emph{tent function}
\begin{align*}
h_1(t)=(1-t)_+ \qquad  t \geq 0,
\end{align*}
which belongs to $T^1_{\MMMr}$ (\cf (\ref{eqn:ToneMMMr})).
If we apply the turning bands operator, we obtain 
\begin{align}\label{eqn:phid}
\varphi_d(t):=\tb_1^d(h_1)(t) \qquad t\geq 0,
\end{align}
which is a radial TCF on $\RR^d$ (\cf Proposition~\ref{prop:ctsTB}). 

\begin{proposition}\label{prop:TBofTent}
\begin{enumerate}[a)]
\item For $d\geq 1$ we have $\varphi_d \in T^d_r = T^d \cap T^1_{\MMMr}$. 
\item For $d\geq 1$ and $k\geq 3$ we have $\varphi_d \not \in T^k_{\MMMr}$.
\item For $d=1$ and $d\geq 6$ we have $\varphi_d \not \in T^2_{\MMMr}$.
\end{enumerate}
\end{proposition}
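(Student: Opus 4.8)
The plan is to run every argument through the explicit integral representation of $\varphi_d=\tb_1^d(h_1)$ afforded by the definition of the turning bands operator. Since $V_1(\RR^d)=S^{d-1}$ and $h_1(s)=(1-|s|)_+$ is even, for $\|t\|=r$ one has
\begin{align*}
\varphi_d(r)=\int_{S^{d-1}}\bigl(1-r\,|A^\trans e_1|\bigr)_+\,\sigma^d_1(\D A),
\end{align*}
where the law of the coordinate $A^\trans e_1$ under $\sigma^d_1$ has, for $d\ge 2$, density proportional to $(1-u^2)^{(d-3)/2}$ on $[-1,1]$. Part a) is then read off directly: $\varphi_d\in T^d$ is already noted via Proposition~\ref{prop:ctsTB} (as $h_1\in T^1_{\MMMr}\subset T^1$ is a continuous TCF), and $\varphi_d\in T^1_{\MMMr}$ follows from (\ref{eqn:ToneMMMr}) because $\varphi_d(0)=1$, $\lim_{r\to\infty}\varphi_d(r)=0$ by dominated convergence (the integrand vanishes pointwise off the $\sigma^d_1$-null set $\{A^\trans e_1=0\}$), and convexity holds since each $r\mapsto(1-r|A^\trans e_1|)_+$ is convex and nonincreasing on $[0,\infty)$. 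Hence $\varphi_d\in T^d\cap T^1_{\MMMr}=T^d_r$.

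The structural feature used in both b) and c) is that the integrand is linear in $r$ while $r\le1$, so $\varphi_d(r)=1-m_d r$ with $m_d=\EE|A^\trans e_1|>0$ on $[0,1]$; thus $\varphi_d''\equiv 0$ on $(0,1)$, whereas differentiating twice on $(1,\infty)$ gives $\varphi_d''(r)=c_d\,r^{-d}(r^2-1)^{(d-3)/2}>0$ with $c_d>0$ (for $d\ge2$). For b), the nesting $T^k_{\MMMr}\subset T^3_{\MMMr}$ for $k\ge3$ reduces everything to showing $\varphi_d\notin T^3_{\MMMr}$. By Proposition~\ref{prop:M3coincide}, $T^3_{\MMMr}=T^3_{\MMMb}$, so $\chi\in T^3_{\MMMr}$ means $\chi(r)=\EE_R\,g_3(r/R)$, where $g_3(u)=(1-\tfrac34u+\tfrac1{16}u^3)\eins_{[0,2]}(u)$ is the normalized volume of the overlap of two unit balls at distance $u$ (cf.\ the M3b row of Table~\ref{table:semiparameterTCFs}). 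Because $g_3''(u)/u$ equals $\tfrac38$ on $(0,2)$ and $0$ on $(2,\infty)$, it is nonincreasing, and a routine differentiation of the scale mixture (near any $r_0>0$ only radii $R\ge r_0/2$ contribute) shows that any $\chi\in T^3_{\MMMb}$ has $\chi''(r)/r$ nonincreasing; equivalently, the law of $2R$ recovered in Table~\ref{table:recoverfGg} via $k(s)=-\tfrac{s^3}{3}\tfrac{\D}{\D s}[\chi''(s)/s]$ is a nonnegative measure. For $\chi=\varphi_d$ the ratio $\varphi_d''(s)/s$ vanishes on $(0,1)$ and is positive on $(1,\infty)$, hence is not nonincreasing (the negative part of $k$ sits at $s=1$: an atom when $d\in\{2,3\}$, a density on $(1,1+\varepsilon)$ when $d\ge4$). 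Thus $\varphi_d\notin T^3_{\MMMr}$, and by nesting $\varphi_d\notin T^k_{\MMMr}$ for all $k\ge3$. The case $d=1$, where $\varphi_1=(1-t)_+$ has a kink at $t=1$, is separate: every element of $T^3_{\MMMb}$ is a scale mixture of the $C^1$ function $g_3$, hence continuously differentiable on $(0,\infty)$, which $\varphi_1$ is not.

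For part c) the case $d=1$ is identical in spirit: members of $T^2_{\MMMr}=T^2_{\MMMb}$ are scale mixtures of the two-dimensional overlap function $g_2$, which is again $C^1$ on $[0,\infty)$, so they are continuously differentiable on $(0,\infty)$, whereas $\varphi_1=(1-t)_+$ is not; hence $\varphi_1\notin T^2_{\MMMr}$. The case $d\ge6$ is the genuinely hard one, as $\varphi_d$ is then smooth and $T^2_{\MMMr}$ is much harder to escape than $T^3_{\MMMr}$. Here I would invoke the $d=2$ inversion of Table~\ref{table:recoverfGg}, which recovers the candidate $\MMr$ shape
\begin{align*}
f(u)=\frac{4u}{\pi}\int_0^{1/(2u)}\bigl((2ut)^{-2}-1\bigr)^{1/2}\,\D\lambda_{\chi}(t),\qquad \lambda_{\chi}(t)=t\,\chi''(1/t),
\end{align*}
and use that $\chi\in T^2_{\MMMr}$ forces $f$ to be nonnegative and nonincreasing. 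For $\chi=\varphi_d$ a direct computation gives that $\D\lambda_{\chi}$ is supported on $(0,1)$ with $\lambda_{\chi}'(t)$ proportional to $t^3(1-t^2)^{(d-5)/2}\bigl(4-(d+1)t^2\bigr)$, a signed density that changes sign at $t=2/\sqrt{d+1}$ once $d\ge4$; the recovered $f$ then inherits a negative part, but only once the dimension is large enough.

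The main obstacle is precisely this last step: converting the signed measure $\D\lambda_{\chi}$ into a sharp statement about $f$ and pinning the threshold at exactly $d\ge6$ rather than $d\ge4$. I expect to examine $f$ (or its monotonicity) in the regime $u<\tfrac12$, where the inner integral runs over all of $(0,1)$ and the positive contribution of $\D\lambda_{\chi}$ on $(0,2/\sqrt{d+1})$ nearly cancels the negative one on $(2/\sqrt{d+1},1)$, and to show the negative contribution wins for $d\ge6$ while $f$ can remain admissible for $d\in\{2,3,4,5\}$. This is a one-dimensional but delicate estimate, and locating the exact dimension where the sign flips is where the real work lies.
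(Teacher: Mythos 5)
Parts a) and b), and your treatment of $d=1$, are essentially correct and run parallel to the paper. For a) the paper uses exactly your route: Proposition~\ref{prop:ctsTB} plus the mixture representation $\tb_1^d(h_1)(r)=\EE_A\, h_1(r\,c(A))$, which is the content of Lemma~\ref{lemma:TBpreservesHone}. For b), your necessary condition that $\chi''(r)/r$ be nonincreasing for $\chi\in T^3_{\MMMb}$ is, under the substitution $t=r^2$, identical to the paper's criterion that $-\chi'(\sqrt t\,)$ be convex (a condition from Gneiting's Theorem~3.1), and your observation that $\varphi_d''(r)/r$ vanishes on $(0,1)$ and is positive on $(1,\infty)$ is the same computation as (\ref{eqn:phidDerivative}); your formula $\varphi_d''(r)=\beta_d(d-1)r^{-d}(r^2-1)^{(d-3)/2}$ is correct. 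Your $C^1$-smoothing exclusion of $\varphi_1$ is a nice self-contained substitute for the paper's citation of \cite{gneiting_99}; one caveat is that deriving the monotonicity of $\chi''(r)/r$ for an arbitrary law of $R$ needs care where $R$ has atoms (there $\chi''$ need not exist pointwise); a clean fix is to mix the convex functions $t\mapsto-\frac{\D}{\D t}\,h_3\bigl(\sqrt t/(2R)\bigr)$, since convexity survives mixing without smoothness assumptions.

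The genuine gap is part c) for $d\geq 6$, which you do not prove — you explicitly defer ``the real work''. Moreover, the tool you propose is not available: the $d=2$ recovery formula for $f$ in Table~\ref{table:recoverfGg} is derived in the paper under the additional hypothesis $\chi\in T^5_{\MMMr}$ (stated in the table caption and used in Section~\ref{sec:tableproofs} to differentiate under the integral), and by your own part b) $\varphi_d\notin T^3_{\MMMr}\supset T^5_{\MMMr}$, so this formula cannot serve as a necessary condition for $\varphi_d\in T^2_{\MMMr}$. The legitimate necessary condition, from Gneiting's Theorem~3.3, is convexity of the function $c(t)$ in (\ref{eqn:notconvex}), and the paper falsifies it by a closed-form evaluation rather than a cancellation estimate: for $d\geq 6$ the integrand is regular enough to differentiate twice under the integral at $t=1$, and a Beta-function computation yields
\begin{align*}
c''(1)=-\beta_d(d-1)\,\frac{3\sqrt{\pi}\,\Gamma(d/2-2)}{16\,\Gamma((d+1)/2)}<0 .
\end{align*}
Your computation of the sign change of $\lambda_\chi'$ at $t=2/\sqrt{d+1}$ is correct, but it occurs already for $d\geq 4$ and decides nothing by itself; and your plan to show that the mixture ``can remain admissible for $d\in\{2,3,4,5\}$'' chases a threshold that most likely does not exist: Remark~\ref{remark:notconvex} conjectures (supported by plots of $c(t)$) that $\varphi_d\notin T^2_{\MMMr}$ also for $d\in\{2,3,4,5\}$, so the bound $d\geq 6$ in the statement marks where the proof of non-convexity of $c$ succeeds, not a genuine phase transition.
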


\begin{remark}\label{remark:notconvex}
In the remaining cases $d\in\{2,3,4,5\}$  plots of the function $c(t)$ from (\ref{eqn:notconvex}) suggest that $\varphi_d$ does not belong to $T^2_{\MMMr}$ either since $c(t)$ is not convex, see Figure~\ref{fig:notconvex} in Section~\ref{sec:proofs}. 
\end{remark}


\begin{remark}\label{remark:linearTCFsonBalls}
The TCF $\varphi_d$ decreases linearly on the interval $[0,1]$  (\cf (\ref{eqn:phidDerivative}))
\begin{align}\label{eqn:betad}
  \varphi_d(t)=1-\beta_d\, t, \qquad  t \in [0,1], \qquad \text{where} \qquad \beta_d=\frac{\Gamma(d/2)}{\sqrt{\pi}\,\Gamma((d+1)/2)}.
\end{align}
Therefore, the radial function $\chi_\beta(t):= 1 - \beta  t $ is an admissible radial TCF on the $d$-dimensional ball $B^d_r$ of radius $r$ if $\beta \in [0,\beta_d/r]$. This complements results in \cite{gneiting_99a}, where it is shown that $\varphi(t) = 1-\alpha t$ is positive definite on $B^d_r$ if and only if $\alpha \in [0,2\beta_d/r]$. It seems likely that the bound $\beta_d/r$ is sharp for $\chi_\beta(t)$ to be a TCF on $B^d_r$.
\end{remark}

Secondly, combining the turning bands operator and the multiplication operation
leads to an example of a TCF  $\chi_3 \in T^3_r$  that is not contained in any of the classes given in Figure~\ref{fig:M3_BR} for $d=3$, and we suppose  that our example $\chi_d$ satisfies this property also for any other dimension $d\geq 2$. Consider the function 
\begin{align}\label{eqn:chid}
\chi_d(t):=\varphi_d(2t) \, h_d(t), \qquad t\geq 0,
\end{align}
where $\varphi_d$ is from (\ref{eqn:phid}).

\begin{proposition}\label{prop:TBofTentandMult}
\begin{enumerate}[a)]
\item For $d\geq 1$ we have $\chi_d \in T^d_r \setminus T^d_\VBR$.
\item \label{item:notinFigure} For $d=3$ we have $\chi_d \in T^d_r \setminus (T^d_\VBR \cup T^d_\MMMr)$. 
\end{enumerate}
\end{proposition}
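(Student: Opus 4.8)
The plan is to realise $\chi_d=\varphi_d(2\,\cdot\,)\cdot h_d$ as a product of two TCFs and then read off its properties. First I would record that both factors are TCFs on $\RR^d$. By Proposition~\ref{prop:TBofTent}a) we have $\varphi_d \in T^d_r \subset T^d$, and since rescaling the argument of a TCF (i.e.\ passing from $X=\{X_t\}$ to $\{X_{2t}\}$) preserves the TCF property, $\varphi_d(2\,\cdot\,) \in T^d$. The second factor $h_d$ is the TCF of the M3b process with deterministic radius $R=1$, i.e.\ the normalised set covariance $h_d(t)=\nu_d(B^d_1 \cap (B^d_1+t))/\kappa_d$ of the unit ball (cf.\ Example~\ref{example:MultGneiting}); in particular $h_d \in T^d_{\MMMb}=T^d_{\MMMr}$ and $h_d(t)=0$ for $\lVert t\rVert \geq 2$. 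Taking the base process with TCF $\varphi_d(2\,\cdot\,)$ and the deterministic indicator $B=\eins_{B^d_1}$ in the multiplication construction (Lemma~\ref{lemma:multmethod}) then exhibits $\chi_d = \varphi_d(2\,\cdot\,)\cdot h_d$ as the TCF of a stationary max-stable process, so $\chi_d \in T^d$.

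To obtain $\chi_d \in T^d_r = T^d \cap T^1_{\MMMr}$ it remains to check $\chi_d \in T^1_{\MMMr}$, i.e.\ that $\chi_d$ is continuous, convex, with $\chi_d(0)=1$ and $\lim_{t\to\infty}\chi_d(t)=0$. Continuity and $\chi_d(0)=1$ are immediate, and since $h_d$ is supported on $[0,2]$ so is $\chi_d$, giving the vanishing at infinity. For convexity I would use that both factors lie in $T^1_{\MMMr}$ (recall $T^d_{\MMMr}\subset T^1_{\MMMr}$ and $\varphi_d(2\,\cdot\,)\in T^1_{\MMMr}$), hence are non-negative, convex, and --- being bounded convex functions tending to $0$ --- non-increasing. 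The elementary fact that the product of two non-negative, non-increasing, convex functions on $[0,\infty)$ is again convex (which follows from the finite-difference inequality together with $(f(x)-f(y))(g(x)-g(y))\geq 0$ for $x<y$) then yields convexity of $\chi_d$, proving $\chi_d \in T^d_r$. Finally, $\chi_d \not\equiv 0$ has compact support, and by Proposition~\ref{prop:MixingNonempty}b) the class $T^d_{\VBR}$ contains no compactly supported functions, so $\chi_d \notin T^d_{\VBR}$. This establishes a).

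For b) the additional content is $\chi_3 \notin T^3_{\MMMr}$, and here I would argue via the recovery formula in Table~\ref{table:recoverfGg}. Using $\tb_1^3(f)(t)=t^{-1}\int_0^t f$ one computes $\varphi_3(t)=1-t/2$ on $[0,1]$ and $\varphi_3(t)=1/(2t)$ on $[1,\infty)$, while $h_3(t)=1-\tfrac34 t+\tfrac1{16}t^3$ on $[0,2]$. Thus $\chi_3$ is piecewise polynomial/rational, and a direct differentiation gives $\chi_3''(t)=\tfrac32+\tfrac38 t-\tfrac34 t^2$ on $[0,\tfrac12]$ and $\chi_3''(t)=\tfrac1{2t^3}+\tfrac1{32}$ on $[\tfrac12,2]$; in particular $\chi_3''$ jumps upward at $t=\tfrac12$, from $\tfrac32$ to $\tfrac{129}{32}$. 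If $\chi_3$ were in $T^3_{\MMMr}=T^3_{\MMr}$, its M2r shape function would be recovered as $f(u)=\chi_3''(2u)/(\pi u)$ and would have to be non-increasing; but the upward jump of $\chi_3''$ forces $f$ to jump upward at $u=\tfrac14$ (from $6/\pi$ to $129/(8\pi)$), contradicting monotonicity. Hence $\chi_3 \notin T^3_{\MMMr}$, which together with a) gives b).

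The main obstacle is the convexity step in a), which must hold in every dimension and is precisely where the monotonicity (not merely convexity) of both factors is essential, via the product lemma above; everything else is bookkeeping with the construction lemmas of Section~\ref{sec:opscounter} or the explicit $d=3$ computation, whose only delicate point is locating the upward jump of $\chi_3''$ (equivalently, of the recovered shape function) at the matching radius $t=\tfrac12$.
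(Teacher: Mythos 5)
Your part a) follows the paper's own route almost verbatim: $\varphi_d(2\,\cdot)$ is a TCF in $T^1_{\MMMr}$ by Proposition~\ref{prop:TBofTent}, the product with $h_d$ is a TCF by the multiplication construction of Lemma~\ref{lemma:multmethod}/Example~\ref{example:MultGneiting}, convexity of the product comes from the Gneiting--Williamson fact that products of non-negative, non-increasing, convex functions are convex (the paper's Lemma~\ref{lemma:HoneMult}), and $\chi_d\notin T^d_{\VBR}$ by compact support and Proposition~\ref{prop:MixingNonempty}\ref{item:M3ohneVBR}). However, you misidentify $h_d$: by (\ref{eqn:functionHd}), $h_d$ is the normalized self-convolution of $\eins_{B^d_{0.5}}$, i.e.\ the TCF of the M3b process with \emph{deterministic radius $R=1/2$}, supported on $[0,1]$ --- not the set covariance of the unit ball supported on $[0,2]$. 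In a) this is harmless (compact support is all you use), but in b) it means every explicit formula you compute is for $\varphi_3(2t)\,h_3(t/2)$ rather than for the paper's $\chi_3(t)=\varphi_3(2t)\,h_3(t)$. With the correct $h_3(t)=1-\tfrac32 t+\tfrac12 t^3$ on $[0,1]$ the same computation gives $\chi_3''(t)=3+3t-6t^2$ on $[0,\tfrac12]$ and $\chi_3''(t)=\tfrac{1}{2t^3}+\tfrac14$ on $[\tfrac12,1]$, an upward jump from $3$ to $\tfrac{17}{4}$ at $t=\tfrac12$, so your argument survives the correction --- but as written it proves the claim for the wrong function.

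Methodologically, your b) is equivalent to, rather than different from, the paper's: the paper shows that $f(t)=-\chi_3'(\sqrt t)$ is not convex by comparing one-sided derivatives at $t=\tfrac14$ ($-3$ versus $-\tfrac{17}{4}$), which violates a necessary condition of Gneiting's Theorem~3.1 for membership in $H_3$; your criterion --- that the recovered M2r shape $f(u)=\chi''(2u)/(\pi u)$ from Table~\ref{table:recoverfGg} must be non-increasing --- says exactly that $\chi''(s)/s$ is non-increasing, which is the same as convexity of $-\chi'(\sqrt{\cdot})$. One small point of rigor: $\chi_3''$ does not exist at the junction, and the recovery formula presupposes its existence, so you should phrase the contradiction via one-sided limits (or one-sided derivatives of the convex function $-\chi_3'(\sqrt{\cdot})$, as the paper does) rather than via a literal ``jump of $\chi_3''$''.
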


\section{Parametric families}\label{sec:parametric}

The considerations above also lead to sharp bounds for some well-known parametric families of positive definite functions to be a TCF, see Table~\ref{table:sharpbounds}. 

The first three families (\emph{powered exponential}, \emph{Whittle-Mat{\'e}rn}, \emph{Cauchy} are completely monotone for the parameters given in Table~\ref{table:sharpbounds} (\cf \cite[(1.2),(1.6) and (2.32)]{millersamko_01} for example), and thus they can be realized by either an MPS process, an M3 process of non-increasing shapes (e.g.\ M2r or M3b) or by a VBR process (in all cases in any dimension).
The \emph{powered error function} is not completely monotone but a member of the class $T^\infty_{\MMMr}$. That means it can be realized by an M3 process of non-increasing shapes or by a VBR process (both in any dimension), but not by an MPS process.
In all of these cases, we may exclude bigger parameters $\nu$ because the {(right-hand) derivative} at $0$ vanishes for bigger $\nu$, but the triangle inequality $\eta(s\pm t)=\eta(s)+\eta(t)$ for $\eta:=1-\chi$ enforces this {derivative} to be negative in order to be a TCF (\cf \cite[Corollary~2]{markov_95} or \cite[Theorem~3~(ii)]{schlathertawn_03}).

The \emph{truncated power function} is an example of a TCF with compact support. Because a TCF has to be positive definite, this leads to the situation that the valid model parameter depends on the dimension. It is chosen such that the function belongs to $T^d_{\MMMr}$ (\cf \cite[Theorem~6.3]{gneiting_99}), and thus can be realized by an M3 process of non-increasing shapes on $\RR^d$. Because of its compact support the function cannot belong to any of the other classes presented in Figure~\ref{fig:M3_BR}.
The bound is sharp in odd dimensions because the function is not positive definite otherwise (\cf \cite[Theorem~1 and \page 165]{golubov_81}). 
For even dimensions this choice is valid but possibly not sharp. Again due to \cite{golubov_81}, we know at least that $\nu$ has to satisfy $\nu \geq (d+1)/2$ in order to be positive definite.

\begin{table}\small
\centering
\begin{tabular}{p{.3\columnwidth}  >{$\displaystyle}p{.27\columnwidth}<{$}  >{$\displaystyle}p{.16\columnwidth}<{$} >{$\displaystyle}p{.15\columnwidth}<{$}}
\toprule \\[-4mm]
\multicolumn{2}{l}{\textbf{{Parametric family of cts. radial functions on $\RR^d$}}}    & \text{\textbf{CF for}} & \text{\textbf{TCF for}} \\[1mm] 
\toprule \\[-1mm]
powered exponential &   \exp(-r^\nu)  & 0 < \nu \leq 2 & 0 < \nu \leq 1 \\[5mm]
Whittle-Mat{\'e}rn & 2^{1-\nu} \, \Gamma(\nu)^{-1}\, r^{\nu}\,K_\nu(r)  & 0 < \nu  & 0 < \nu \leq 0.5 \\[5mm]
Cauchy  & (1+ r^\nu)^{-\beta} \quad \beta > 0 &  0 < \nu \leq 2 &  0 < \nu \leq 1 \\[5mm]
powered error function$^{*}$ & \text{erfc}(r^\nu) &  0 < \nu \leq 1 & 0 < \nu \leq 1 \\[5mm]
truncated power function$^{*}$ & (1-r)_+^\nu  & \nu \geq  (d+1)/2  &  \nu \geq  \lfloor d/2 \rfloor + 1  \\[1mm]
\bottomrule
\end{tabular}
\caption{Parametric families of continuous radially symmetric functions on $\RR^d$ and their sharp parameter bounds for being a correlation function (CF) and for being a tail correlation function (TCF). $^{*}${}The CF bound for the powered error function is sharp if we require validity of the model for all dimensions and the TCF bound for the truncated power function is sharp for odd dimensions.}\label{table:sharpbounds}
\end{table}

\section{Proofs}\label{sec:proofs}

The class $H_d$ in \cite{gneiting_99} is defined as the class of functions $\varphi$ on $[0,\infty)$ of the form 
\begin{align}\label{eqn:Hd}
\varphi(t)=\int_{(0,\infty)} h_d(st) \,\D G(s),
\end{align}
where $G$ is a distribution function on $(0,\infty)$ (with $G(0+)=0$) and where
\begin{align}\label{eqn:functionHd}
h_d(t)= \frac{d \, \Gamma(d/2)}{\sqrt{\pi} \, \Gamma((d+1)/2)} \, \int_t^1 (1-v^2)^{(d-1)/2}_+ \D v.
\end{align}
Here the function $h_d(t)=\tilde{h}_d(t)/\tilde{h}_d(0)$ with $\tilde{h}_d$ is the self-convolution of the ball indicator function $\eins_{B^d_{0.5}}$ viewed as a radial function. It is shown already in \cite{gneiting_99} that $H_d$ and the \emph{Mittal-Berman class} $V_d$ coincide (for $d \geq 2$; \cf \cite[(40)]{gneiting_99}  and \cite{mittal_76}). Here $V_d$ is the class of functions $\varphi$ on $[0,\infty)$ of the form 
\begin{align}\label{eqn:Vd}
\varphi(t)=2 \int_{t/2}^\infty \frac{S_{d,u,\theta(t,u)}}{S_{d,u,\pi}} \, p(u) \,\D u,
\end{align}
where $p$ is a probability density function on $(0,\infty)$, such that $p(u)/u^{d-1}$ is non-increasing, and $S_{d,u,\theta}$ is the surface area of the sphere $\{x \pmid \lVert x \rVert = u\} \subset \RR^d$  intersected by the cone of angle $\theta(t,u)=\arccos(t/(2u))$  (with apex the origin). In what follows, we show that we have 
\begin{align}\label{eqn:Hdcoincide}
H_d = T^d_{\MMMr}=T^d_{\MMr}=T^d_{\MMMb} (=V_d) \qquad \text{for $d\geq 1$ ($d\geq 2$).}
\end{align} 

\begin{proof}[Proof of Proposition~\ref{prop:M3coincide}] 
We divide the proof into five steps:
\begin{description}
 
\item[\normalfont \emph{1st step}] $H_d=T^d_{\MMMb}$ for $d \geq 1$.\medskip\\
By definition, members of the class $T^d_{\MMMb}$ have the form 
\begin{align*}
\chi(\lVert t \rVert)
=\EE_R \left(\frac{1}{\vol_d(B^d_R)} \int_{\RR^d}  \eins_{\lVert z\rVert \leq R} \wedge \eins_{\lVert z-t\rVert \leq R} \,\D z \right)
= \EE_R \left( h_d\left(\frac{\lVert t \rVert}{2R}\right)\right)
\end{align*}
for some random radius $R \in (0,\infty)$. The last equality holds because the integral with the minimum $\wedge$ is in fact a convolution for indicator functions. Therefore, the transformation $S:=1/(2R)$ shows that this $\chi$ and $\varphi$ from (\ref{eqn:Hd}) are equal, when $G$ denotes the law of $S$ on $(0,\infty)$ and vice versa. Hence $T^d_{\MMMb}=H_d$ for $d\geq 1$.

\item[\normalfont \emph{2nd step}] $T^d_{\MMr} = V_d = H_d$ for $d \geq 2$ and (\ref{eqn:fandH}) holds for $d \geq 2$.\medskip\\
Members of $T^d_{\MMr}$ depend on a shape function $f\geq 0$ with $\int_{\RR^d} f(\lVert t \rVert)  \D t = 1$, which is non-increasing as the radius grows, whereas members of $V_d$ depend on a probability density function $p$ on $(0,\infty)$ with $p(u)/u^{d-1}$ non-increasing in $u>0$. Integration along the radius shows that both functions are in one-to-one corresponcence via
\begin{align*}
f(\lVert t \rVert)=\frac{p(\lVert t \rVert)}{S_{d,\lVert t \rVert,\pi}}.
\end{align*}
Moreover, since $f$ is non-increasing, this correspondence is compatible with the integration in  (\ref{eqn:Vd})  and the TCF for M2r processes in Table~\ref{table:semiparameterTCFs}. Hence $T^d_{\MMr} = V_d$ for $d \geq 2$. From \cite{gneiting_99} we already know that $H_d=V_d$. In particular, $f$ and $G$ as in (\ref{eqn:Hd}) can be recovered from each other by (44) and (45) in \cite{gneiting_99} with $n \geq 2$ or, equivalently, $f$ and $H(s)=G(s/2)$ can be recovered from each other by (\ref{eqn:fandH}) with $d \geq 2$ here. Note that our $f$ corresponds to $g$ in \cite{gneiting_99}. 

\item[\normalfont \emph{3rd step}] $T^1_{\MMr} = H_1$ and (\ref{eqn:fandH}) holds for $d=1$.\medskip\\
If $d=1$, it is straightforward to check that for $\chi \in T^1_{\MMr}$ depending on a single shape function $f$, we have 
\begin{align}\label{eqn:MoneHone}
\chi(t)= \int_{\RR} f(z) \wedge f(z-t) \,\D z =2\int_{t/2}^\infty f(u) \,\D u
\end{align}
(similarly to the integration along the radius in (\ref{eqn:Vd})). Now, precisely the same proof as the proof of Theorem~5.2. in \cite{gneiting_99} applies here when we set $n=1$, $g=f$, $\varphi=\chi$ and omit the term $S_{n,u,\theta}$ in (48) and (49) therein, showing that $T^1_{\MMr}=H_1$. 
In particular, $f$ and $G$ as in (\ref{eqn:Hd}) can also be recovered from each other by  (44) and (45) in \cite{gneiting_99} with $n=1$ or, equivalently, $f$ and $H(s)=G(s/2)$ can be recovered from each other by (\ref{eqn:fandH}) with $d=1$ here (where our $f$ corresponds to $g$ therein). 

\item[\normalfont \emph{4th step}] $T^d_{\MMMr} \subset H_d$ for $d \geq 1$.\medskip\\
From the 2nd and 3rd step we know that $T^d_{\MMr}=H_d$ for $d\geq 1$. That means for each (single deterministic) radially symmetric non-increasing shape function $f \geq 0$ on $\RR^d$ with $0 < \lVert f \rVert_{\text{L}^1(\RR^d)} < \infty$ we may define a unique distribution function $G_{f/\lVert f \rVert_{\text{L}^1(\RR^d)}} (s)= H_{f/\lVert f \rVert_{\text{L}^1(\RR^d)}} (2s)$ via (\ref{eqn:fandH}). We set 
\begin{align*} 
A(f)_s:= \lVert f \rVert_{\text{L}^1(\RR^d)} \,\, G_{f/\lVert f \rVert_{\text{L}^1(\RR^d)}}(s) \qquad s>0
\end{align*}
such that $A(f)$ is non-decreasing on $(0,\infty)$ with $A(f)_{0+}=0$, right-continuous and $A(f)$ has total variation $\lVert f \rVert_{\text{L}^1(\RR^d)}$. It is coherent to set $A(0)\equiv 0$. 
Now, consider a member $\chi$ of $T^d_{\MMMr}$ and its corresponding measurable process $\left\{f(t)\right\}_{t \in \RR^d}$, which satisfies $\EE_f(\lVert f \rVert_{\text{L}^1(\RR^d)})=1$. Then $\left\{A(f)_s\right\}_{s>0}$ defines a non-decreasing, right-continuous process with $\EE \left(A(f)_\infty\right) = 1$ and $A(f)_{0+}=0$. 
Moreover, note that (by the correspondence $T^d_{\MMr}=H_d$)
\begin{align*}
\chi(t)=\EE_f\left(\int_0^\infty h_d(st) \,\D A(f)_s\right).
\end{align*}
Set $G(s):=\EE_f A(f)_{s}$. Then $G$ is also non-decreasing, right-continuous with total variation 1 and with $G(0+)=0$ (by dominated convergence).
Finally, we obtain (again by dominated convergence) that
\begin{align*}
\chi(t)=\int_0^\infty h_d(st) \,\D \EE_fA(f)_s = \int_0^\infty h_d(st) \,\D G(s)
\end{align*}
as desired. Hence $T^d_{\MMMr} \subset H_d$.

\item[\normalfont \emph{5th step}] (Summary) From the previous steps we know that $T^d_{\MMMr} \subset H_d = T^d_{\MMMb} =T^d_{\MMr}$ for $d\geq 1$. Clearly, $T^d_{\MMMb} \subset T^d_{\MMMr}$ by definition, so that $T^d_{\MMMr}$,$T^d_{\MMr}$,$T^d_{\MMMb}$,$H_d$ coincide for $d\geq 1$. \qedhere
 
\end{description}

\end{proof}

\begin{proof}[Proof of Proposition~\ref{prop:MixingInclusions}]
\begin{enumerate}[a)]
\item If $\varphi$ is completely monotone, then also $-\varphi'$ and $-\varphi'(\sqrt{\cdot})$ will be completely monotone, since $\sqrt{\cdot}$ is a Bernstein function. This shows $T^d_{\MPS} \subset T^\infty_{\MMMr}$. Clearly, $T^\infty_{\MMMr} \subset T^d_{\MMMr}$.
\item Clearly, $T^d_{\BR} \subset T^\infty_{\VBR}$, since BR process form a proper subclass of VBR processes. The inclusion $T^d_{\VBR} \subset T^\infty_{\MMMr}$ follows from (\ref{eqn:TdVBR}), since $\gamma(t)=8\lVert t \rVert^2$ is a valid variogram in each dimension.
\item The variogram $\gamma(t)=8 \lVert t \rVert^{2\alpha}$ is valid in each dimension for $\alpha \in (0,1]$ (corresponding to fractal Brownian motion). Hence $\erfc(t^\alpha)$ is a valid TCF of a BR process for $\alpha \in (0,1]$. Moreover, the function $\erfc(t^\alpha)$ is completely monotone if and only if  $\alpha \leq 0.5$. \qedhere
\end{enumerate}
\end{proof}


\begin{proposition}\label{prop:absmon}
Let $R$, $S_\lambda$ and $T_\lambda=R \circ S_\lambda$ be the maps from (\ref{eqn:R}),(\ref{eqn:Slambda}) and (\ref{eqn:Tlambda}) and set $R_\alpha(x):=R((1-\alpha)x+\alpha)$, $S_{\lambda,\alpha}(x):=S_\lambda((1-\alpha)x+\alpha)$ and $T_{\lambda,\alpha}(x):=T_\lambda((1-\alpha)x+\alpha)$. Then $R_\alpha$, $S_{\lambda,\alpha}$ and $T_{\lambda,\alpha}$  are continuous on $[-1,1]$ and analytic on $(-1,1)$ for all $\lambda > 0$ and $\alpha \in [0,1]$. 
\begin{enumerate}[a)]
\item\label{itemi:absmon} The function $R_\alpha$ is absolutely monotone on $[0,1]$ for $\alpha \geq 0.5$.
\item\label{itemii:absmon} The function $S_{\lambda,\alpha}$ is absolutely monotone on $[0,1]$ for $\lambda (1-\alpha) \leq 8 (\erf^{-1}(1/\sqrt{2}))^2$. 
\item\label{itemiii:absmon} The function $T_{\lambda,\alpha}=R \circ S_{\lambda,\alpha}$  is absolutely monotone on $[0,1]$ for $\lambda(1-\alpha) \leq 8 (\erf^{-1}(1/2))^2$. 
\end{enumerate}
\end{proposition}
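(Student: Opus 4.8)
The plan is to use the standard equivalence that, for a function $f$ that is analytic on $(-1,1)$ and continuous on $[-1,1]$, absolute monotonicity on $[0,1]$ is the same as complete monotonicity of the reflected function $t\mapsto f(1-t)$ on $[0,1]$ (since $(-1)^k\frac{\D^k}{\D t^k}f(1-t)=f^{(k)}(1-t)$), which is in turn the same as all Taylor coefficients of $f$ at $0$ being nonnegative. I would record at the outset the elementary composition principle that settles part c): if $g,h$ are absolutely monotone on $[0,1]$ and $g$ maps $[0,1]$ into $[0,1]$, then $h\circ g$ is absolutely monotone, because such functions are exactly the power series with nonnegative coefficients, and products and compositions of these again have nonnegative coefficients. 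I will also use repeatedly that $h\geq 0$ on an interval $I$ together with $-h'$ completely monotone on $I$ forces $h$ to be completely monotone on $I$ (immediate from $h^{(k)}=-(-h')^{(k-1)}$ for $k\ge 1$).

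For part a) I would reflect and invoke the Euler product for the cosine. Writing $R_\alpha(1-t)=\cos(a\sqrt t)$ with $a=\pi\sqrt{(1-\alpha)/2}$, the product formula $\cos z=\prod_{k\ge 0}\bigl(1-\tfrac{4z^2}{(2k+1)^2\pi^2}\bigr)$ applied at $z=a\sqrt t$ gives
\begin{align*}
\cos(a\sqrt t)=\prod_{k\ge 0}\left(1-\frac{2(1-\alpha)}{(2k+1)^2}\,t\right).
\end{align*}
Each factor $1-c_k t$ with $c_k=2(1-\alpha)/(2k+1)^2\ge 0$ is completely monotone on $[0,1/c_k]$, and the binding factor is $k=0$, whose range $[0,1/(2(1-\alpha))]$ contains $[0,1]$ precisely when $\alpha\ge \tfrac12$. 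Since an infinite product of completely monotone functions (a pointwise limit of finite products) is completely monotone, $\cos(a\sqrt t)$ is completely monotone on $[0,1]$, which is exactly the assertion for $R_\alpha$.

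For part b), after reflecting and rescaling by $b:=\lambda(1-\alpha)/8$ (so that $S_{\lambda,\alpha}(1-t)=Q(bt)$), it suffices to prove that $Q(u):=1-2(\erf(\sqrt u))^2$ is completely monotone on $[0,u^*]$ with $u^*:=(\erf^{-1}(1/\sqrt2))^2$; the stated bound is exactly $b\le u^*$. On this interval $Q$ is nonnegative because it decreases from $Q(0)=1$ to $Q(u^*)=0$, so by the principle above it is enough to show $-Q'$ is completely monotone. Here I would compute
\begin{align*}
-Q'(u)=\frac{4}{\sqrt\pi}\,\frac{\erf(\sqrt u)\,e^{-u}}{\sqrt u}
=\frac{8}{\pi}\int_0^1 e^{-(1+r^2)u}\,\D r,
\end{align*}
where the second equality uses $\erf(\sqrt u)=\tfrac{2\sqrt u}{\sqrt\pi}\int_0^1 e^{-r^2 u}\,\D r$ (substitute $y=r\sqrt u$). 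The right-hand side is a positive mixture of the completely monotone functions $u\mapsto e^{-(1+r^2)u}$, hence completely monotone, which finishes part b).

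Finally, part c) follows by composition, since $T_{\lambda,\alpha}=R\circ S_{\lambda,\alpha}$. The $T$-bound $\lambda(1-\alpha)\le 8(\erf^{-1}(1/2))^2$ is stronger than the $S$-bound of part b) (because $\erf^{-1}(1/2)<\erf^{-1}(1/\sqrt2)$), so $S_{\lambda,\alpha}$ is already absolutely monotone; the same bound gives $S_{\lambda,\alpha}(0)=1-2(\erf(\sqrt b))^2\ge \tfrac12$, so $S_{\lambda,\alpha}$ maps $[0,1]$ into $[\tfrac12,1]$. By part a) with $\alpha=\tfrac12$, $R$ is absolutely monotone on $[\tfrac12,1]$, hence expands with nonnegative coefficients in powers of $(y-\tfrac12)$ there; substituting $y=S_{\lambda,\alpha}(x)$, whose shifted form $S_{\lambda,\alpha}(x)-\tfrac12$ has nonnegative Taylor coefficients at $0$, yields nonnegative coefficients for $T_{\lambda,\alpha}$. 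I expect the one genuinely nontrivial step to be part b): spotting the integral representation that exhibits $-Q'$ as a superposition of exponentials is what converts the problem into a transparent complete-monotonicity statement, whereas parts a) and c) are then essentially formal.
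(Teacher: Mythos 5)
Your proof is correct, and it reaches all three bounds of the paper by a partly different route. For a), the paper expands $R(x)=\cos\bigl(\pi\sqrt{(1-x)/2}\bigr)$ directly into its Taylor series at $0$ and verifies nonnegativity of the coefficients $a_k$, $k\ge 1$, by an explicit rearrangement and an even/odd splitting of the resulting alternating series, then observes that $\alpha\ge 0.5$ is exactly what makes the constant term $R_\alpha(0)$ nonnegative; your Euler-product argument avoids that delicate coefficient computation entirely and makes the threshold transparent, since $\alpha=\tfrac12$ is precisely when the zero of the binding factor (the zero of $\cos$ at $\pi/2$) enters $[0,1]$ — the only step you gloss over, that pointwise limits of completely monotone functions on $[0,1]$ are completely monotone, is standard and here even immediate because the partial products converge locally uniformly as entire functions of $t$, so all derivatives converge. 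For b), your skeleton coincides with the paper's (reduce to complete monotonicity of the negative derivative, with the bound coming from nonnegativity of the value at the left endpoint, i.e.\ of the constant Taylor coefficient $S_\lambda(0)$), but where the paper's Lemma~\ref{lemma:cmfunction} writes $-f'$ as the product of $e^{-x}$ and $\erf(\sqrt x)/\sqrt x$ and cites Miller--Samko for complete monotonicity of the latter, your identity $-Q'(u)=\tfrac{8}{\pi}\int_0^1 e^{-(1+r^2)u}\,\D r$ exhibits the Bernstein representation behind that citation and makes the step self-contained. For c), the paper simply notes $T_\lambda'=(R'\circ S_\lambda)\,S_\lambda'$, so that all but possibly the constant Taylor coefficient are nonnegative, with $T_\lambda(0)\ge 0$ iff $\lambda\le 8(\erf^{-1}(1/2))^2$; your argument is the same composition idea with more explicit bookkeeping, and it is to your credit that you do \emph{not} invoke verbatim the naive composition principle you state at the outset (which would fail here, since $R$ is not absolutely monotone on all of $[0,1]$, as $R(0)=\cos(\pi/\sqrt2)<0$), but instead expand $R$ at $1/2$ and check that $S_{\lambda,\alpha}-\tfrac12$ has nonnegative coefficients — this also explains structurally why the $T$-bound is exactly the condition $S_{\lambda,\alpha}(0)\ge\tfrac12$, the sign-change point of $R$. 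Both proofs establish the stated sufficiency with identical constants; the paper's version additionally records, in passing, that the constant-term conditions are equivalences.
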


To deal with the function $S_{\lambda,\alpha}$ in Proposition~\ref{prop:absmon}, we first prove an auxiliary lemma, which might be interesting in its own right.

\begin{lemma}\label{lemma:cmfunction}
The function $f(x)=1-\left(\erf(\sqrt{x})\right)^2$ is completely monotone on $[0,\infty)$.
\end{lemma}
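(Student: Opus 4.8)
The plan is to produce an explicit representation of $f$ as a superposition of decaying exponentials, which renders complete monotonicity transparent. The crucial device is the substitution identity
\begin{align*}
\erf(\sqrt{x}) = \frac{2}{\sqrt{\pi}}\int_0^{\sqrt{x}} e^{-t^2}\,\D t = \frac{2\sqrt{x}}{\sqrt{\pi}}\int_0^1 e^{-x s^2}\,\D s,
\end{align*}
obtained from the definition of $\erf$ by the change of variables $t = s\sqrt{x}$. Its effect is to linearize the exponent in the variable $x$, which is precisely what is needed to expose an exponential mixture.

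First I would differentiate. Since $\tfrac{\D}{\D x}\erf(\sqrt{x}) = e^{-x}/\sqrt{\pi x}$, the product rule gives $f'(x) = -2\,\erf(\sqrt{x})\,e^{-x}/\sqrt{\pi x}$. Substituting the identity above and cancelling the factors $\sqrt{x}$ yields
\begin{align*}
f'(x) = -\frac{4}{\pi}\int_0^1 e^{-x(1+s^2)}\,\D s, \qquad x>0.
\end{align*}
Each integrand $e^{-x(1+s^2)}$ is completely monotone in $x$ because $1+s^2\ge 1>0$, and a positive superposition (here an integral over $s\in[0,1]$ against Lebesgue measure) of completely monotone functions is again completely monotone. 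Hence $-f'$ is completely monotone on $(0,\infty)$.

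Integrating from $x$ to $\infty$ (using $f(\infty)=0$, since $\erf(\infty)=1$) I would moreover record the explicit representation
\begin{align*}
f(x) = \frac{4}{\pi}\int_0^1 \frac{e^{-x(1+s^2)}}{1+s^2}\,\D s,
\end{align*}
which already exhibits $f$ as the Laplace transform of a positive measure (the image of $\tfrac{4}{\pi}(1+s^2)^{-1}\,\D s$ under $s\mapsto 1+s^2$, supported on $[1,2]$) and therefore as a completely monotone function; the value $f(0)=\tfrac{4}{\pi}\arctan 1 = 1$ serves as a sanity check. Alternatively one concludes directly from the definition used in the paper: $f$ is continuous on $[0,\infty)$ with $f\ge 0$ (as $\erf(\sqrt{x})\in[0,1]$), and $-f'$ is completely monotone, so $(-1)^k f^{(k)}\ge 0$ for all $k\ge 0$ and $f$ is completely monotone on $[0,\infty)$.

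The only nontrivial point is spotting the substitution identity that converts $\erf(\sqrt{x})$ into an $x$-exponential mixture; once it is in hand the remainder is a short computation, and I anticipate no genuine obstacle beyond this.
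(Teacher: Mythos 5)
Your proof is correct, and it shares its skeleton with the paper's: both compute $f'(x)=-\tfrac{2}{\sqrt{\pi}}\,\erf(\sqrt{x})\,e^{-x}/\sqrt{x}$ on $(0,\infty)$ and reduce everything to the complete monotonicity of $-f'$, with nonnegativity and continuity at $0$ handling the boundary. Where you genuinely diverge is in how that key step is certified. The paper factors $-f'$ as the product of $e^{-x}$ and $\erf(\sqrt{x})/\sqrt{x}$, cites Miller--Samko for the complete monotonicity of each factor, and uses closure of the completely monotone class under products. You instead linearize the exponent via $\erf(\sqrt{x})=\tfrac{2\sqrt{x}}{\sqrt{\pi}}\int_0^1 e^{-xs^2}\,\D s$, which gives $-f'(x)=\tfrac{4}{\pi}\int_0^1 e^{-x(1+s^2)}\,\D s$, an explicit positive mixture of decaying exponentials; this is in effect a self-contained Bernstein representation that subsumes the cited fact (your substitution identity is exactly the Bernstein representation of $\erf(\sqrt{x})/\sqrt{x}$ up to a constant). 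Integrating from $x$ to $\infty$ you moreover obtain
\begin{align*}
f(x)=\frac{4}{\pi}\int_0^1 \frac{e^{-x(1+s^2)}}{1+s^2}\,\D s,
\end{align*}
exhibiting $f$ directly as the Laplace transform of a probability measure supported on $[1,2]$ --- an explicit conclusion the paper's argument does not produce, and one that makes complete monotonicity immediate without even passing through $-f'$. The trade-off: the paper's route is shorter and leans on standard tables of completely monotone functions; yours is elementary and self-contained, and the explicit mixing measure (with the sanity check $f(0)=\tfrac{4}{\pi}\arctan 1=1$ confirming normalization) is a small bonus.
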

\begin{proof} The function $f$ is non-negative, continuous on $[0,\infty)$ and the first derivative of $f$ on $(0,\infty)$ is given by
\begin{align*}
f'(x)=- \frac{2}{\sqrt{\pi}}\frac{\erf(\sqrt{x})}{\sqrt{x}} {e^{-x}} \qquad x>0.
\end{align*}
Now, the functions $e^{-x}$ and $\erf{\sqrt{x}}/\sqrt{x}$ are completely monotone on $(0,\infty)$ (cf. \cite[(1.2) and Corollary to Theorem~5]{millersamko_01}). Hence, $-f'$ is completely monotone, which shows that $f$ is completely monotone on $[0,\infty)$.
\end{proof}


\begin{proof}[Proof of Proposition~\ref{prop:absmon}]
It can be seen directly that the functions $R_\alpha$, $S_{\lambda,\alpha}$, $T_{\lambda,\alpha}$ 
are continuous on $[-1,1]$ and analytic on $(-1,1)$ for all $\lambda > 0$. 
\begin{enumerate}[a)]
\item Using the series expansion of the cosine function, we arrive at
\begin{align*}
&R(x)
=\sum_{n=0}^\infty \frac{(-1)^n}{(2n)!} \pi^{2n} \frac{(1-x)^n}{2^n}
=\sum_{n=0}^\infty \frac{\pi^{2n}}{2^n(2n)!} \sum_{k=0}^n \binom{n}{k} x^k(-1)^{n-k}\\
&=\sum_{k=0}^\infty x^k \sum_{n=0}^\infty  \frac{\pi^{2n+2k}}{2^{n+k}(2n+2k)!} \binom{n+k}{k} (-1)^{n}\\
&=R(0)+\sum_{k=1}^\infty x^k \frac{\pi^{2k}}{2^{2k}k!} \sum_{n=0}^\infty (-1)^n \frac{\pi^{2n}}{2^n(2n)!} \frac{1}{(2n+2k-1)(2n+2k-3) \dots (2n+1)}.
\end{align*}
We show that the coefficients 
\begin{align*}
a_k:=\sum_{n=0}^\infty (-1)^n \frac{\pi^{2n}}{2^n(2n)!} \frac{1}{(2n+2k-1)(2n-2k-3) \cdots (2n+1)}
\end{align*}
are non-negative for $k\geq 1$: Since this series representing $a_k$ converges absolutely, we may partition by even ($n=2\ell$) and odd ($n=2\ell+1$) coefficients:
\begin{align*}
&a_k =\sum_{\ell=0}^\infty \frac{\pi^{4\ell}}{2^{2\ell}(4\ell)!} \frac{1}{(4\ell+2k-1) \cdots (4\ell+1)}\\
&\phantom{a_k = \,} - 
\sum_{\ell=0}^\infty \,
\frac{\pi^{4\ell+2}}{2^{2\ell+1}(4\ell+2)!} \frac{1}{(4\ell+2k+1) \cdots (4\ell+3)}\\
&=\sum_{\ell=0}^\infty \, \frac{\pi^{4\ell}}{2^{2\ell}(4\ell)!} \frac{1}{(4\ell+2k-1) \cdots (4\ell+3)}
\left[ 
\frac{1}{4\ell+1}-\frac{{\pi^2}/{2}}{(4\ell+2)(4\ell+1)}\frac{1}{4\ell+2k+1}
\right] 
\end{align*} 
Now, the expression in the brackets is positive since $k\geq 1$ and $\ell \geq 0$. 
Thus, $a_k > 0$ for $k \geq 1$. In particular, $R(x)-R(0)$ and $R_\alpha(x)-R_\alpha(0)$ are absolutely monotone on $[0,1]$. If $\alpha \geq 0.5$, then $R_\alpha(0)\geq 0$.
\item
Lemma~\ref{lemma:cmfunction} tells us that $f(x)=1-\left(\erf(\sqrt{x})\right)^2$ is completely monotone on $[0,\infty)$. Now, $S_\lambda(x)= 2 f\left(\lambda(1-x)/8\right)-1$. Hence, the $k$-th derivative for $k\geq 1$ satisfies 
\begin{align*}
S^{(k)}_\lambda(x)=2 \left(\frac{\lambda}{8}\right)^k (-1)^k f^{(k)}\left(\frac{\lambda}{8}(1-x)\right) \geq 0.
\end{align*}
In particular, all but eventually the $0$-th Taylor coefficient $S_\lambda(0)$ are non-negative, and  $S_\lambda(0)$ is non-negative if and only if $\lambda \leq 8 (\erf^{-1}(1/\sqrt{2}))^2$. Note that $S_{\lambda,\alpha} = S_{\lambda (1-\alpha)}$.
\item Since $T_\lambda=R \circ S_\lambda$ and $T'_\lambda=(R'\circ S_\lambda)  S'_\lambda$, it follows from the proof of \ref{itemi:absmon}) and \ref{itemii:absmon}) that all but eventually the $0$-th Taylor coefficient $T_\lambda(0)$ are non-negative, and  $T_\lambda(0)$ is non-negative if and only if $\lambda \leq 8 (\erf^{-1}(1/2))^2$. Note that $T_{\lambda,\alpha} = T_{\lambda (1-\alpha)}$. \qedhere
\end{enumerate}
\end{proof}

\begin{proof}[Proof of Proposition~\ref{prop:cortrafo}]
Since convex combinations, products and (pointwise) limits of correlation functions are again correlation functions,  a map $B:[-1,1] \rightarrow [-1,1]$ transforms correlation functions again into correlation functions if $B$ is continuous on $[-1,1]$ and analytic on $(-1,1)$, such that the respective Taylor series at $0$ has only non-negative coefficients.  Such functions are {absolutely monotone} on $[0,1]$ and conversely, the Taylor series representation at $0$ of an absolutely monotone function on $[0,1]$ extends to $[-1,1]$. So the assertion follows from Proposition~\ref{prop:absmon} with $B \in \{R_\alpha,S_{\lambda,\alpha},T_{\lambda,\alpha}\}$.
\end{proof}

\begin{proof}[Proof of Proposition~\ref{prop:MixingNonempty}]
\begin{enumerate}[a)]
\item Cf.\ the proof of Proposition~\ref{prop:MixingInclusions}\ref{item:erfcCondition}) and note that the function $\erfc(t^\alpha)$ belongs to $T^\infty_\MMMr$ if and only if $\alpha \in (0,1]$.
\item
The class $T^d_\MMMr=H_d$ naturally contains functions with compact support, \eg the function $h_d$ (\cf (\ref{eqn:Hd})), whereas $T^d_\VBR$ cannot contain such functions.  To see this, recall (\ref{eqn:TinftyMMMr}) and observe that members of $T^\infty_\MMMr$ are scale mixtures of $\erfc$ that cannot have compact support. Thus, the involved variogram in (\ref{eqn:TdVBR}) would have to take the value $\infty$ outside a compact region. 
\item
Consider the simple $\erfc$-mixture  
\begin{align*}
\chi(\lVert t \rVert)  = 0.25 \cdot \erfc (\lVert t \rVert) + 0.75 \cdot \erfc(5 \lVert t \rVert) \qquad t \in \RR^d.
\end{align*} 
Surely, $\chi$ is a member of $T^\infty_\MMMr$ (\cf (\ref{eqn:TinftyMMMr})).  Suppose that there is a BR process on $\RR^d$  corresponding to a variogram $\tilde \gamma$ such that its TCF $\tilde \chi$ coincides with $\chi$. We will show now that this cannot be true for any dimension $d$. Otherwise, 
\begin{align*}
\tilde \gamma(\lVert t \rVert)= 8 \left[\erfc^{-1} \left(0.25 \cdot \erfc (\lVert t \Vert) + 0.75 \cdot \erfc(5 \lVert t \rVert)\right)\right]^2 \qquad t \in \RR^d
\end{align*} 
is a variogram for any dimension $d$.  In particular, $\tilde \gamma(\lVert \cdot \rVert)$ is for any dimension $d$ a continuous negative definite function on $\RR^d$. By \cite[5.1.8]{bcr_84} it follows that the function 
\begin{align*}
\psi(r) = \left[\erfc^{-1} \left(0.25 \cdot \erfc (\sqrt{r}) + 0.75 \cdot \erfc(5 \sqrt{r})\right)\right]^2 \qquad r \in [0,\infty) 
\end{align*}
is a (continuous) negative definite function on $[0,\infty)$ in the semigroup sense and obviously $\psi(r)\geq 0$. Hence $\psi(r)$ is a Bernstein function (cf. \cite[4.4.3]{bcr_84}). However, the second derivative of $\psi(r)$ has a local minimum. So, the assertion fails and our assumption must be wrong. That means there is a dimension $d_0$ such that the above $\chi \in T^\infty_\MMMr$ cannot be realized as a TCF of a BR process for any dimension $d \geq d_0$. \qedhere
\end{enumerate}
\end{proof}

\begin{lemma} \label{lemma:Stiefelcomposition}
Let $k_1 \leq k_2 \leq k_3$.
\begin{enumerate}[a)]
\item The composition map 
\begin{align*}
V_{k_1}(\RR^{k_2}) \times V_{k_2}(\RR^{k_3}) \rightarrow V_{k_1}(\RR^{k_3}) \qquad (A,B) \mapsto B \circ A
\end{align*} 
is continuous.
\item If $B \sim \sigma^{k_3}_{k_2}$ is uniformly distributed on $V_{k_2}(\RR^{k_3})$ and $A$ is an independent (Borel-measurable) random variable with values in $V_{k_1}(\RR^{k_2})$, then the composition $B \circ A$ will also be uniformly distributed $B \circ A \sim \sigma^{k_3}_{k_1}$.
\item The turning bands operator is compatible with compositions
\begin{align}
\tb_{k_1}^{k_2} \circ \tb_{k_2}^{k_3} = \tb_{k_1}^{k_3}.
\end{align}
\end{enumerate}
\end{lemma}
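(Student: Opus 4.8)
The plan is to treat the three parts in order, with part b) as the crux and parts a) and c) being essentially bookkeeping. For part a), note that $A \in V_{k_1}(\RR^{k_2})$ is a $k_2 \times k_1$ matrix and $B \in V_{k_2}(\RR^{k_3})$ a $k_3 \times k_2$ matrix, so the composition $B \circ A$ is the matrix product $BA \in \RR^{k_3 \times k_1}$. I would first verify that it lands in the target manifold by computing
\begin{align*}
(BA)^\trans (BA) = A^\trans (B^\trans B) A = A^\trans \Eins_{k_2 \times k_2} A = A^\trans A = \Eins_{k_1 \times k_1},
\end{align*}
using the defining relations $B^\trans B = \Eins_{k_2 \times k_2}$ and $A^\trans A = \Eins_{k_1 \times k_1}$. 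Continuity is then immediate, since matrix multiplication is polynomial (hence continuous) in the entries and each Stiefel manifold carries the subspace topology inherited from its matrix space.

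For part b), the key facts are that $\sigma^{k_3}_{k_2}$ is the unique normalized measure on $V_{k_2}(\RR^{k_3})$ invariant under the left action $B \mapsto GB$ of $\orth(k_3)$, and that $\orth(k_3)$ acts transitively on the compact homogeneous space $V_{k_1}(\RR^{k_3})$, so that the invariant probability measure there is unique as well. Let $\mu$ denote the law of $BA$, which is carried by $V_{k_1}(\RR^{k_3})$ by part a). I would show that $\mu$ is left-$\orth(k_3)$-invariant: for $G \in \orth(k_3)$ and a continuous test function $\phi$,
\begin{align*}
\int \phi(GC)\,\mu(\D C) = \EE\left[\phi\left((GB)A\right)\right].
\end{align*}
Conditioning on the independent variable $A$ and using $GB \stackrel{\distr}{=} B$ (the left-invariance of $\sigma^{k_3}_{k_2}$) replaces $GB$ by $B$ inside the conditional expectation; integrating back over the law of $A$ returns $\EE[\phi(BA)] = \int \phi\,\D\mu$. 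Hence $\mu$ is invariant, and by uniqueness $\mu = \sigma^{k_3}_{k_1}$, that is, $BA \sim \sigma^{k_3}_{k_1}$. Note that this argument only needs $B$ to be uniform; $A$ may be any independent random frame.

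For part c), I would expand the composition directly. For $f \in C(\RR^{k_1})$ and $t \in \RR^{k_3}$, applying $\tb_{k_1}^{k_2}$ first and then $\tb_{k_2}^{k_3}$ gives
\begin{align*}
\int_{V_{k_2}(\RR^{k_3})} \int_{V_{k_1}(\RR^{k_2})} f\left(A^\trans B^\trans t\right) \,\sigma^{k_2}_{k_1}(\D A)\,\sigma^{k_3}_{k_2}(\D B),
\end{align*}
the inner integral being $\tb_{k_1}^{k_2}(f)(B^\trans t)$. Since $A^\trans B^\trans = (BA)^\trans$ and, by part b), the pushforward of $\sigma^{k_2}_{k_1} \otimes \sigma^{k_3}_{k_2}$ under $(A,B) \mapsto BA$ is exactly $\sigma^{k_3}_{k_1}$, the iterated integral collapses to $\int_{V_{k_1}(\RR^{k_3})} f(C^\trans t)\,\sigma^{k_3}_{k_1}(\D C) = \tb_{k_1}^{k_3}(f)(t)$. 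The Fubini interchange is harmless: both Stiefel manifolds are compact and, for fixed $t$, the argument $(BA)^\trans t$ ranges over a compact set on which $f$ is bounded.

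The main obstacle is part b): setting up the invariance argument cleanly and correctly invoking uniqueness of the invariant (Haar) probability measure on the compact homogeneous space $V_{k_1}(\RR^{k_3}) = \orth(k_3)/\orth(k_3 - k_1)$. Once this is in place, parts a) and c) follow with only the routine check that $BA$ stays in the correct Stiefel manifold and the (harmless) application of Fubini.
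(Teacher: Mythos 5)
Your proposal is correct and takes essentially the same approach as the paper: prove that the law of $B \circ A$ is invariant under the left $\orth(k_3)$-action by conditioning on one of the two independent variables and using the invariance of $\sigma^{k_3}_{k_2}$, then conclude $B\circ A \sim \sigma^{k_3}_{k_1}$ from uniqueness of the normalized invariant measure, with c) following by identifying the iterated turning-bands integral as an expectation over the pushforward. The only cosmetic differences are that you condition on $A$ where the paper conditions on $B$ (via the auxiliary function $g(b)=\EE_A f(b\circ A)$ and dominated convergence), and that you make explicit the well-definedness check $(BA)^\trans (BA)=\Eins_{k_1\times k_1}$ and the collapse of the double integral in c), both of which the paper leaves implicit.
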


\begin{proof}[Proof of Lemma~\ref{lemma:Stiefelcomposition}]
\begin{enumerate}[a)]
\item The composition of matrices is continuous and here just restricted to a subspace.
\item \label{item:Stiefelcomposition} Let $f$ be a continuous function on $V_{k_1}(\RR^{k_3})$, then (by dominated convergence) the function $g(b):=\EE_A(f(b\circ A))$ will also be continuous on $V_{k_2}(\RR^{k_3})$. 
Therefore, $\EE_B(g(G^{-1} B))= \EE_B(g(B))$ for all $G \in \orth(k_3)$, since $B \sim \sigma^{k_3}_{k_2}$. Thus, we also have  for $G \in \orth(k_3)$ that
\begin{align*}
\EE f (G^{-1} \circ B \circ A) &= \EE ( \EE ( f ( G^{-1} \circ B \circ A ) | B ) )
= \EE (g(G^{-1} B))  \\
&= \EE (g(B)) = \EE ( \EE ( f ( B \circ A ) | B ) )  = \EE f (B \circ A).
\end{align*}
\item The assertion follows from part \ref{item:Stiefelcomposition}). \qedhere
\end{enumerate}
\end{proof}

\begin{proof}[Proof of Lemma~\ref{lemma:TBmethod}]
Let $M$ be a non-empty finite subset of $\RR^d$ and  $x \in (0,\infty)^M$. The finite-dimensional distributions of $Y$ are determined by 
\begin{align*}
- \log \PP(Y_{t} \leq x_t, \, t \in M) = \int_{V_k(\RR^d)} \int_{\Omega} \left(\Max_{t \in M}\frac{V_{A^\trans t}(\omega)}{x_t}\right) \,\nu (\D \omega) \, \sigma^d_k(\D A).
\end{align*} 
\begin{enumerate}[a)]
\item  If $X$ is stationary, then
\begin{align*}
\int_{\Omega} \left(\Max_{t \in M}\frac{V_{A^\trans (t+h)}(\omega)}{x_t}\right) \,\nu (\D \omega) = \int_{\Omega} \left(\Max_{t \in M}\frac{V_{A^\trans t}(\omega)}{x_t}\right) \,\nu (\D \omega),
\end{align*}
for all $h \in \RR^d$ and all $A \in V_k(\RR^d)$, since $A$ is linear.
\item This follows since $\sigma^d_k$ is $\orth(d)$-invariant.
\item The assertion follows from (\ref{eqn:chispectral}). \qedhere
\end{enumerate}
\end{proof}

\begin{proof}[Proof of Proposition~\ref{prop:ctsTB}]
In view of Lemma~\ref{lemma:TBmethod}
we need to show that continuous TCFs on $\RR^k$ coincide with the TCFs of stochastically continuous processes on $\RR^k$. Therefore, let $\chi$ be a continuous TCF on $\RR^k$ and let $X$ be a corresponding stationary max-stable process. 
Let $\theta$ be the extremal coefficient function (ECF) of $X$ as in {\cite{strokorbschlather_13}} and let $X^*$ be the associated Tawn-Molchanov process as in {\cite[Theorem~8]{strokorbschlather_13}}. Note that $\chi(h)=2-\theta(\{h,o\})$. By construction, $X^*$ is also stationary and has TCF $\chi$. Additionally, $X^*$ is stochastically continuous due to {\cite[Theorem~25]{strokorbschlather_13}}.
\end{proof}

\begin{proof}[Proof of Lemma~\ref{lemma:multmethod}]
Let $M$ be a non-empty finite subset of $\RR^d$ and  $x \in (0,\infty)^M$. The finite-dimensional distributions of $Y$ are determined by 
\begin{align*}
- \log \PP(Y_{t} \leq x_t, \, t \in M) = \EE_B \int_{\RR^d} \int_{\Omega} \left(\Max_{t \in M}\frac{B(t-z)V_{t}(\omega)}{c_B x_t}\right) \,\nu (\D \omega) \, \D z.
\end{align*} 
\begin{enumerate}[a)]
\item  If $X$ is stationary, then
\begin{align*}
\int_{\Omega} \left(\Max_{t \in M}\frac{B(t-z)V_{t+h}(\omega)}{x_t}\right) \,\nu (\D \omega)
=
\int_{\Omega} \left(\Max_{t \in M}\frac{B(t-z)V_{t}(\omega)}{x_t}\right) \,\nu (\D \omega)
\end{align*}
for all $h \in \RR^d$, all $z \in \RR^d$ and all $B \in \{0,1\}^{\RR^d}$. Therefore,
\begin{align*}
&\int_{\RR^d} \int_{\Omega} \left(\Max_{t \in M}\frac{B((t+h)-z)V_{t+h}(\omega)}{x_t}\right) \,\nu (\D \omega) \, \D z\\
&=
\int_{\RR^d} \int_{\Omega} \left(\Max_{t \in M}\frac{B(t-z)V_{t}(\omega)}{x_t}\right) \,\nu (\D \omega) \, \D z
\end{align*}
for all $h \in \RR^d$ and all integrable functions $B \in \{0,1\}^{\RR^d}$.
\item The assertion follows from (\ref{eqn:chispectral}) and the fact that $b_1 v_1 \wedge b_2 v_2 = b_1b_2 (v_1 \wedge v_2)$ for real numbers $b_1,b_2,v_1,v_2$ with $b_i \in \{0,1\}$ for $i=1,2$. \qedhere
\end{enumerate}
\end{proof}

In the sequel, we shall often write $H_d$ as in \cite{gneiting_99} instead of $T^d_{\MMMr}$, $T^d_{\MMr}$ or $T^d_{\MMMb}$, since all classes coincide (see (\ref{eqn:Hdcoincide})).

\begin{lemma}\label{lemma:TBpreservesHone}
For all $1\leq k \leq d$ The turning bands operator $\tb_k^d$ transfers members of the class $H_1$ into members of $H_1$.
\end{lemma}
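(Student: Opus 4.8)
The plan is to exploit the explicit description of $H_1 = T^1_{\MMMr}$ from (\ref{eqn:ToneMMMr}): it is exactly the class of \emph{continuous, convex} functions $\chi$ on $[0,\infty)$ with $\chi(0)=1$ and $\lim_{t\to\infty}\chi(t)=0$ (such functions automatically take values in $[0,1]$). So I would simply verify each of these properties for $\tb_k^d(\chi)$. The first move is to put $\tb_k^d(\chi)$ into a transparent radial form. Writing $\chi$ as the radial function $\chi_0(\lVert \cdot \rVert)$ on $\RR^k$, we have
\begin{align*}
\tb_k^d(\chi)(t) = \int_{V_k(\RR^d)} \chi_0\bigl(\lVert A^\trans t \rVert\bigr) \, \sigma^d_k(\D A), \qquad t \in \RR^d.
\end{align*}
By the $\orth(d)$-invariance of $\sigma^d_k$ this depends on $r:=\lVert t \rVert$ only, so we may set $t = r e_1$ and write $\lVert A^\trans t \rVert = r\,\rho(A)$ with $\rho(A):=\lVert A^\trans e_1 \rVert$, giving
\begin{align*}
\tb_k^d(\chi)(r) = \EE\bigl[\chi_0(r\,\rho)\bigr],
\end{align*}
where $\rho=\rho(A)$ is the length of the projection of the fixed unit vector $e_1$ onto the random frame $A\sim\sigma^d_k$. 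Since $A$ has orthonormal columns, $AA^\trans$ is an orthogonal projection of rank $k$, so its diagonal entry $\rho^2 = e_1^\trans A A^\trans e_1 = (AA^\trans)_{11}$ lies in $[0,1]$, whence $\rho \in [0,1]$.

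From this representation three of the four defining properties are immediate. The value at the origin is $\tb_k^d(\chi)(0)=\chi_0(0)=1$, and the integrand lies in $[0,1]$, so $\tb_k^d(\chi)$ does as well. Continuity in $r$ follows from dominated convergence, using that $\chi_0$ is continuous and bounded by $1$. For convexity, each map $r\mapsto \chi_0(r\,\rho)$ is convex on $[0,\infty)$, being the composition of the convex $\chi_0$ with the non-negative linear map $r\mapsto r\rho$; averaging over the law of $\rho$ preserves convexity, so $\tb_k^d(\chi)$ is convex.

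The only delicate point, which I expect to be the crux, is the decay $\lim_{r\to\infty}\tb_k^d(\chi)(r)=0$. By dominated convergence it suffices to show $\rho>0$ almost surely, for then $\chi_0(r\,\rho)\to 0$ pointwise a.s. To establish $\PP(\rho=0)=0$, I would realize $A$ as the first $k$ columns of a Haar-distributed $O\in\orth(d)$; then $\rho^2=\sum_{j=1}^k O_{1j}^2$ is the squared length of the first $k$ entries of the first row of $O$, and that row is uniformly distributed on $S^{d-1}$. Thus $\rho=0$ occurs precisely when this uniform point lies in the span of $e_{k+1},\dots,e_d$, a proper subspace whose intersection with $S^{d-1}$ has surface measure zero whenever $k\geq 1$ and $d\geq 2$. (The case $d=1$ forces $k=1$ and $\tb_1^1=\id$, which is trivial.) Hence $\rho>0$ almost surely, the decay at infinity follows, and therefore $\tb_k^d(\chi)\in H_1$.
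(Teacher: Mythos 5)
Your proof is correct and follows essentially the same route as the paper's: both use the radial representation $\tb_k^d(\chi)(r)=\EE_A[\chi_0(r\,\rho(A))]$ with $\rho(A)=\lVert A^\trans e_1\rVert$, dominated convergence for continuity and the limit at infinity, and averaging of convex compositions for convexity. The one point where you go beyond the paper is in justifying the decay at infinity: the paper simply invokes dominated convergence, which tacitly requires $\rho>0$ almost surely, and your Haar-measure argument (the first row of a Haar-distributed orthogonal matrix is uniform on $S^{d-1}$, so it lies in the span of $e_{k+1},\dots,e_d$ with probability zero) correctly supplies that implicit step.
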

\begin{proof}
The class $H_1$ is the class of continuous functions $h$ on $[0,\infty)$ that are convex and satisfy $h(0)=1$ and $\lim_{t \to \infty} h(t)=0$. 
All properties are preserved under $\tb^d_k$. For continuity and $\lim_{t \to \infty} h(t)=0$ use the dominated convergence theorem. Preservation of convexity follows from  $\tb_k^d(h)(r)=\EE_A(h(r c(A)))$ for $r \geq 0$ with $A \sim \sigma^d_k$ and $c(A)=\lVert A^\trans (1,0,\dots,0)^\trans \rVert$.
\end{proof}

\begin{proof}[Proof of Proposition~\ref{prop:TBofTent}]
A priori it is clear that $\varphi_1=h_1$ does not belong to $H_k$ for $k \geq 2$ \cite{gneiting_99}.
\begin{enumerate}[a)]
\item Because of Proposition~\ref{prop:ctsTB} the function $\varphi_d$ is a radial TCF on $\RR^d$. Lemma~\ref{lemma:TBpreservesHone} shows that $\varphi_d=\tb_1^d(h_1)$ belongs to $H_1$. 
\item 
By \cite[equation (6)]{gneiting_99a}  $\varphi_d$ can be expressed as 
\begin{align}\label{eqn:phidexplicit}
\varphi_d(t)=\frac{2\,\Gamma(d/2)}{\sqrt{\pi}\,\Gamma((d-1)/2)} \int_0^1 h_1(tw) (1-w^2)^{(d-3)/2} \D w.
\end{align}
Thus, we have for $d \geq 2$ that
\begin{align}\label{eqn:phidDerivative}
-\varphi'_d\left(\sqrt{t}\right)= \beta_d \left\{ \begin{array}{ll} 1 & \qquad t \leq 1 \\ 1-(1-1/t)^{(d-1)/2} & \qquad t > 1   \end{array}\right.,
\end{align}
where $\beta_d$ is the constant from (\ref{eqn:betad}). Clearly, $-\varphi_d'(\sqrt{t})$ is not convex.
Therefore, one of the conditions of Theorem~3.1 in \cite{gneiting_99} (that is necessary to belong to the class $H_3$) is not fulfilled.
\item
We verify that one of the conditions of Theorem~3.3 in \cite{gneiting_99} (that is  necessary to belong to the class $H_2$) is not fulfilled: Namely, we show that for all $d \geq 6$ the function 
\begin{align}\label{eqn:notconvex}
c(t):= \int_0^t \sqrt{\frac{v}{t-v}} \left(- \varphi'_d\left(1/\sqrt{v}\right)\right) \D v
= \int_0^1 \sqrt{\frac{w}{1-w}} \left(- \varphi'_d\left(1/\sqrt{tw}\right) \cdot t \right) \D w
\end{align}
is not convex. From (\ref{eqn:phidDerivative}) we see that
\begin{align*}
-\varphi'_d\left(1/\sqrt{v}\right)= \beta_d \left\{ \begin{array}{ll} 1-(1-v)^{(d-1)/2} & \qquad v < 1 \\ 1 & \qquad v \geq 1  \end{array}\right.
\end{align*}
Since $d \geq 6$  we can compute the second derivative of $c$ at $1$:
\begin{align*}
  c''(1)& = \int_0^1 \sqrt{\frac{w}{1-w}} \,\cdot \, \left.\frac{\D^2}{\D t^2}\right|_{t=1}\left(- \varphi'_d\left(1/\sqrt{tw}\right) \cdot t \right) \D w\\
& = \beta_d (d-1) \int_0^1 w^{3/2}(1-w)^{(d-6)/2} (1-w\, (d+1)/4) \D w\\ 
& = \beta_d (d-1) \left(B\left(\frac{5}{2},\frac{d-4}{2}\right)-\frac{d+1}{4}\, B\left(\frac{7}{2},\frac{d-4}{2}\right) \right)\\
& = - \beta_d (d-1) \, \frac{3\,\sqrt{\pi}\,\Gamma(d/2-2)}{16\,\Gamma((d+1)/2)}  < 0
\end{align*}
Here $B(x,y)=\int_0^1 t^{(x-1)} (1-t)^{y-1} \D t$ denotes the Beta function. Since $c''(1)$ is negative, the function $c$ cannot be convex. This finishes the proof.
\qedhere
\end{enumerate}
\end{proof}


\begin{figure}\footnotesize
\centering
\scalebox{0.7}{\includegraphics{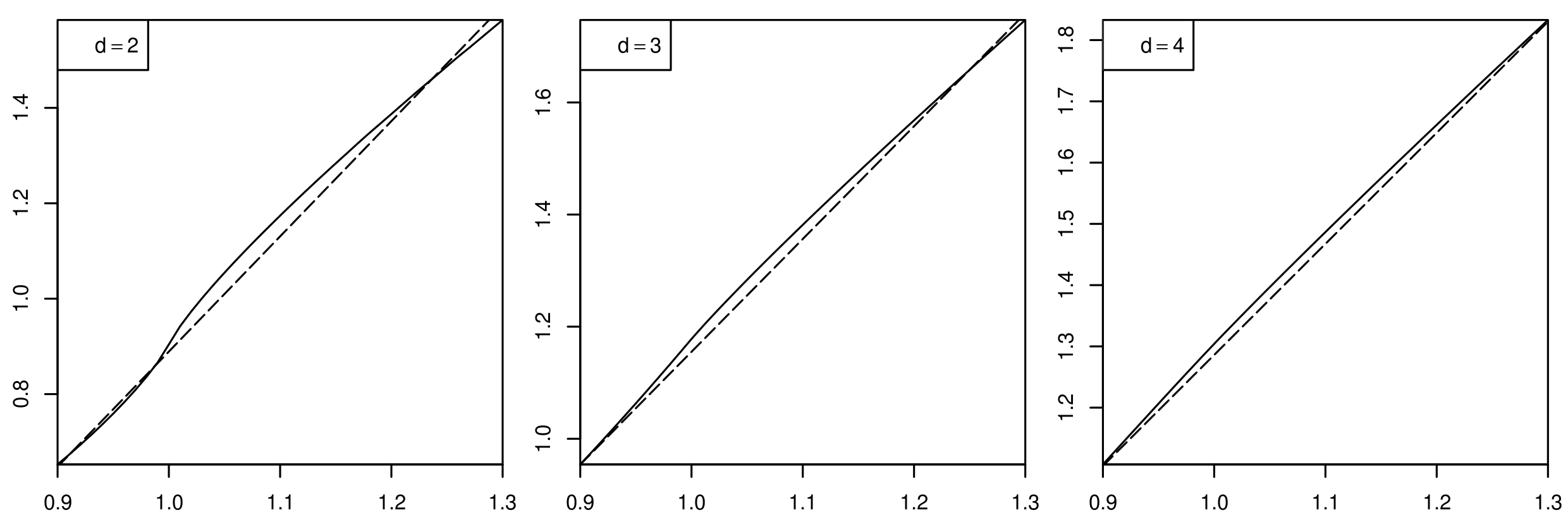}}
\caption{The function $c(t)/\beta_d$ with $c(t)$ from (\ref{eqn:notconvex}) and $\beta_d$ as in (\ref{eqn:betad}) is plotted for $d \in \{2,3,4\}$ (solid line). Additionally the  dashed straight line  indicates that the respective functions are not convex (\cf Remark~\ref{remark:notconvex}). }\label{fig:notconvex}
\end{figure}

\begin{lemma}\label{lemma:HoneMult}
If $f, g\in H_1$ then the product also belongs to this class $fg \in H_1$.
\end{lemma}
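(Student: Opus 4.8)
The plan is to work directly from the explicit description of $H_1$ recalled in the proof of Lemma~\ref{lemma:TBpreservesHone}: a function lies in $H_1$ precisely when it is continuous and convex on $[0,\infty)$, equals $1$ at the origin, and tends to $0$ at infinity. For $h := fg$, continuity is clear as a product of continuous functions, and $(fg)(0)=f(0)g(0)=1$. The two substantial points are the behaviour at infinity and, above all, convexity.

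Before addressing convexity I would record the structural fact that every member of $H_1$ is non-increasing. Indeed, a convex function on $[0,\infty)$ that tends to $0$ cannot increase on any interval: a positive difference quotient would, by monotonicity of the slopes of a convex function, persist and force the function to $+\infty$, contradicting the limit. Consequently $f$ and $g$ are non-increasing; since they tend to $0$ they are also non-negative and bounded by their value $1$ at the origin. The limit of the product is then immediate, as $0 \le (fg)(t) \le f(t) \to 0$. Thus only convexity remains, and I will exploit exactly that $f$ and $g$ are non-negative, non-increasing and convex.

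For convexity I would avoid any differentiability assumption and argue via second differences, using that a continuous midpoint-convex function is convex. Fix $t>0$ and $0<\delta \le t$, and abbreviate $a=f(t-\delta)$, $b=f(t)$, $c=f(t+\delta)$ together with $p=g(t-\delta)$, $q=g(t)$, $r=g(t+\delta)$. Convexity of $f$ and $g$ gives $b \le (a+c)/2$ and $q \le (p+r)/2$, while monotonicity gives $a \ge c \ge 0$ and $p \ge r \ge 0$. The desired estimate $ap+cr \ge 2bq$ then follows in two steps: first $2bq \le (a+c)(p+r)/2$ because $b,q \ge 0$ are dominated by the respective averages; and second $(a+c)(p+r)/2 \le ap+cr$, which rearranges to $(a-c)(p-r) \ge 0$ and therefore holds since both factors are non-negative by monotonicity. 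This yields $h(t-\delta)+h(t+\delta) \ge 2h(t)$, \ie midpoint convexity, and convexity of $fg$ follows from continuity.

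The only real obstacle is the convexity step, since products of convex functions need not be convex in general. The resolution is that the defining properties of $H_1$ secretly encode monotonicity, and it is precisely the sign condition $(a-c)(p-r)\ge 0$ furnished by that monotonicity which makes the second difference of the product non-negative. Once this elementary inequality is established, all requirements for membership in $H_1$ are met and the proof concludes.
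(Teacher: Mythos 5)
Your proof is correct, and it is self-contained where the paper's is not: the paper disposes of this lemma in one line by citing \cite[Lemma~4.7]{gneiting_99} (equivalently \cite[Lemma~2]{williamson_56}), which states exactly that products of non-negative, non-increasing, convex functions on an interval are again non-negative, non-increasing and convex. You prove this cited fact from scratch. Your preliminary observation that every member of $H_1$ is automatically non-increasing and non-negative (convexity plus $\lim_{t\to\infty}h(t)=0$ forbids any positive slope) is needed in the paper's argument too, though it is left implicit there; making it explicit is a genuine improvement in completeness. Your convexity argument via second differences is sound: with $a=f(t-\delta)$, $b=f(t)$, $c=f(t+\delta)$ and $p,q,r$ the corresponding values of $g$, the chain
\begin{align*}
2bq \;\leq\; \frac{(a+c)(p+r)}{2} \;\leq\; ap+cr
\end{align*}
holds, the first inequality because $0\leq b\leq (a+c)/2$ and $0\leq q\leq (p+r)/2$, the second because it rearranges to $(a-c)(p-r)\geq 0$, which is where monotonicity of both factors enters; midpoint convexity plus continuity then gives convexity, and the boundary conditions $(fg)(0)=1$ and $0\leq fg\leq f\to 0$ are immediate. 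So the underlying idea is the same as the paper's --- reduce everything to the stability of the class of non-negative, non-increasing, convex functions under multiplication --- but you buy independence from the external references at the cost of a short elementary computation, while the paper buys brevity by citation.
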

\begin{proof}
This is an immediate consequence of \cite[Lemma~4.7]{gneiting_99} (or \cite[Lemma~2]{williamson_56}) which states that if $f$ and $g$ are non-negative, non-increasing and convex on an interval, then the product $f g$ is also non-negative, non-increasing and convex there.
\end{proof}

\begin{proof}[Proof of Proposition~\ref{prop:TBofTentandMult}]
\begin{enumerate}[a)]
\item From Proposition~\ref{prop:TBofTent} we know that $\varphi_d(2t)$ is a radial TCF on $\RR^d$ that belongs to $H_1$. Since $h_d(t)$ belongs to $H_d$ it follows from Example~\ref{example:MultGneiting} that the product $\chi_d(t)=\varphi_d(2t) h_d(t)$ is a radial TCF on $\RR^d$. Moreover $h_d(t)$ also belongs to $H_d \subset H_1$ and therefore $\chi_d \in H_1$ due to Lemma~\ref{lemma:HoneMult}.
  However, $\chi_d \not \in T^d_{\VBR}$ because of its compact support (\cf Propostion \ref{prop:MixingNonempty} \ref{item:M3ohneVBR}).
\item It suffices to show that the function
\begin{align*}
f(t):=-\chi'_3(\sqrt{t})
= -2\varphi'_3(\sqrt{4t})h_3(\sqrt{t})+\varphi_3(\sqrt{4t}) (-h'_3(\sqrt{t}))
\end{align*}
is not convex, because then one of the conditions of Theorem~3.1 in \cite{gneiting_99} (that is necessary to belong to the class $H_3$) is not fulfilled. From (\ref{eqn:functionHd}), (\ref{eqn:phidexplicit}) and (\ref{eqn:phidDerivative}) we see that for $t \in [0,1]$
\begin{align*}
h_3(\sqrt{t})& =\frac{1}{2}(2-3t^{1/2}+t^{3/2}),
\qquad 
& -h'_3(\sqrt{t}) &= \frac{3}{2}(1-t), \\
\varphi_3(\sqrt{4t})& =\left\{ 
\begin{array}{ll} 
1-\sqrt{t} & \quad t \leq 1/4 \\
1/(4\sqrt{t}) & \quad t \geq  1/4 
\end{array}, \right. 
\qquad 
& -2\varphi'_3(\sqrt{4t}) &=\left\{ 
\begin{array}{ll} 
1 & \quad t \leq 1/4 \\
1/(4t) & \quad t \geq  1/4 
\end{array}. \right. 
\end{align*}
Thus, $f(t)$ is a decreasing function on $[0,1]$ with the following left-hand and right-hand derivative at $1/4$
\begin{align*}
\lim_{t \uparrow 1/4}f'(t)=-3 \qquad  \text{ and } \qquad  \lim_{t \downarrow 1/4}f'(t)=-17/4.  
\end{align*}
Hence, $f$ cannot be convex in a neighbourhood of $1/4$.
\qedhere
\end{enumerate}
\end{proof}

\subsection{Derivation of expressions in tables}\label{sec:tableproofs}

\begin{lemma}\label{lemma:recoverGandf_odddim}
Let $d\geq 3$ be odd and $\varphi \in H_d=T^d_{\MMr}$. Let $G$ be a corresponding distribution function as in (\ref{eqn:Hd}) in the definition of the class $H_d$ and let $f$ be a non-increasing shape function as in the definition of the class $T^d_{\MMr}$. Set $k:=(n-1)/2$ and define the (right-hand) derivative
\begin{align*}
\lambda(t):=(-1)^k \frac{\D^k}{\D t^k}\left[-\varphi'\left(\sqrt{t}\right)\right]\qquad t\geq 0
\end{align*}
Then $G$ and $f$ can be recovered from $\varphi$ by
\begin{align*}
G(s)= \frac{\sqrt{\pi}}{d \, \Gamma(d/2)} \int_{0}^s \frac{1}{t^d} \, \D \lambda\left(\frac{1}{t^2}\right) 
\qquad \text{ and } \qquad
f(u)= \left(\frac{2}{\sqrt{\pi}}\right)^{d-1} \lambda\left(4 u^2\right).
\end{align*}
\end{lemma}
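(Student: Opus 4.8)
The plan is to invert the integral representation $\varphi(t)=\int_{(0,\infty)}h_d(st)\,\D G(s)$ from (\ref{eqn:Hd}) by differentiation, exploiting that for odd $d$ the exponent $(d-1)/2=k$ is a non-negative integer. Writing $c_d=d\,\Gamma(d/2)/(\sqrt\pi\,\Gamma((d+1)/2))$ for the normalizing constant in (\ref{eqn:functionHd}), one has $h_d'(v)=-c_d\,(1-v^2)^{(d-1)/2}_+$. Since $h_d$ is supported on $[0,1]$, for each fixed $t>0$ the integrand is supported on $s\in(0,1/t]$, so differentiating under the integral sign is legitimate; after the substitution $t\mapsto\sqrt t$ this gives
\[
-\varphi'(\sqrt t)=c_d\int_0^{1/\sqrt t}s\,(1-s^2t)^{k}\,\D G(s).
\]

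First I would differentiate this identity $k$ times in $t$. Each differentiation of $(1-s^2t)^{k-j}_+$ pulls out a factor $-(k-j)s^2$ and lowers the exponent by one, so the $j$-th derivative is a constant multiple of $\int_0^\infty s^{2j+1}(1-s^2t)^{k-j}_+\,\D G(s)$. At the top order $j=k$ the integrand becomes $s^{2k+1}(1-s^2t)^0_+=s^{d}\,\eins_{\{s<1/\sqrt t\}}$, whence
\[
\lambda(t)=(-1)^k\frac{\D^k}{\D t^k}\bigl[-\varphi'(\sqrt t)\bigr]=c_d\,k!\int_0^{1/\sqrt t}s^{d}\,\D G(s).
\]
Because $k!=\Gamma((d+1)/2)$, the constant simplifies to $c_d\,k!=d\,\Gamma(d/2)/\sqrt\pi$, which is the key formula underlying both recoveries. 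To recover $G$, set $r=1/\sqrt t$ and read the last display as $\int_0^r s^d\,\D G(s)=\tfrac{\sqrt\pi}{d\,\Gamma(d/2)}\,\lambda(1/r^2)$; the left side is the cumulative of the Lebesgue--Stieltjes measure $s^d\,\D G(s)$, so $\D G(s)=s^{-d}\,\D\bigl[\tfrac{\sqrt\pi}{d\,\Gamma(d/2)}\lambda(1/s^2)\bigr]$, and integrating from $0$ (using $G(0+)=0$) yields the stated formula for $G$.

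For $f$ I would invoke the correspondence $f(u)=\kappa_d^{-1}\int_0^{1/u}s^d\,\D H(s)$ with $H(s)=G(s/2)$ from the second step of the proof of Proposition~\ref{prop:M3coincide}. The substitution $s=2\sigma$ turns this into $f(u)=2^d\kappa_d^{-1}\int_0^{1/(2u)}\sigma^d\,\D G(\sigma)$, and evaluating the key formula at $t=4u^2$ (so that $1/\sqrt t=1/(2u)$) gives $f(u)=2^d\kappa_d^{-1}\,\tfrac{\sqrt\pi}{d\,\Gamma(d/2)}\,\lambda(4u^2)$. Inserting $\kappa_d=2\pi^{d/2}/(d\,\Gamma(d/2))$ collapses the constant to $(2/\sqrt\pi)^{d-1}$, exactly as claimed.

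The one delicate point is the very last differentiation under the integral, where the $t$-derivative of $(1-s^2t)_+$ is the discontinuous step $-s^2\eins_{\{t<1/s^2\}}$ rather than a continuous function (all earlier steps involve exponents $\geq 1$ and are routine). I would justify this interchange by dominated convergence applied to the difference quotient: for $t$ in a fixed neighbourhood the relevant $s$-range stays bounded (again by the compact support of $h_d$), the difference quotients are bounded there by a constant multiple of $s^{2k+1}$, and the pointwise limit holds for every $s\neq 1/\sqrt t$, i.e.\ $G$-almost everywhere. This step — together with the one-sided behaviour at the kink, which is precisely why $\lambda$ is taken as a right-hand derivative — is the only place requiring more than elementary calculus.
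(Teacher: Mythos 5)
Your proposal is correct, and it is more self-contained than the paper's proof, which differs from yours in one substantive way: for the recovery of $G$ the paper simply cites \cite[Theorem~3.2]{gneiting_99} as a black box, whereas you re-derive that inversion for odd $d$ by $k$-fold differentiation under the integral in (\ref{eqn:Hd}), arriving at the key identity $\lambda(t)=c_d\,k!\int_0^{1/\sqrt{t}}s^d\,\D G(s)$ with $c_d\,k!=d\,\Gamma(d/2)/\sqrt{\pi}$ (your constants check out, and your right-derivative/dominated-convergence treatment of the kink of $(1-s^2t)_+$ — where the one-sided difference quotient vanishes identically for $s\geq 1/\sqrt{t}$, so even a $G$-atom at $s=1/\sqrt{t}$ causes no trouble — is exactly the justification needed, and is the reason $\lambda$ is defined as a right-hand derivative). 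For $f$, both you and the paper use the correspondence (\ref{eqn:fandH}) with $H(s)=G(s/2)$ from the proof of Proposition~\ref{prop:M3coincide}, but the routes diverge again: the paper substitutes the Stieltjes formula for $\D G$ into (\ref{eqn:fandH}) and obtains $f(u)=(2/\sqrt{\pi})^{d-1}\left(\lambda(4u^2)-\lim_{x\to\infty}\lambda(x)\right)$, which forces it to argue separately (via \cite[(22)]{gneiting_99}, $\lambda=-a'$ for a non-negative, non-increasing, convex $a$) that $\lambda$ vanishes at infinity; your key identity evaluated at $t=4u^2$ gives the truncated moment $\int_0^{1/(2u)}\sigma^d\,\D G(\sigma)$ directly, so the vanishing of $\lambda$ at infinity is built in (it follows from $G(0+)=0$) and no auxiliary limit argument is needed. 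The trade-off: the paper's proof is shorter by outsourcing the inversion to Gneiting; yours is longer but elementary throughout and slightly cleaner at the final step. (You also correctly read the statement's $k:=(n-1)/2$ as the intended $k=(d-1)/2$, a typo in the lemma.)
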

\begin{proof}
The recovery of $G$ is precisely \cite[Theorem~3.2]{gneiting_99}. By ({\ref{eqn:fandH}}) with $G(s)=H(2s)$ we obtain
\begin{align*}
f(u)&=\frac{1}{\kappa_d} \int_0^{1/(2u)} (2s)^d \D G(s)
=\frac{\sqrt{2^d \pi}}{\kappa_d \, d \, \Gamma(d/2)}\int_0^{1/(2u)} \, \D \lambda\left(\frac{1}{s^2}\right) \\
&= \left(\frac{2}{\sqrt{\pi}}\right)^{1/(2u)} \left(\lambda(4u^2) - \lim_{x \to \infty} \lambda(x)\right)
\end{align*}
But $\lim_{x \to \infty} \lambda(x)$  necessarily vanishes, since $\lambda(t)=-a'(t)$ for a non-negative (\ie bounded from below), non-increasing and convex function $a(t)$ due to \cite[(22)]{gneiting_99}.
\end{proof}

{ 
\begin{proof}[Proof of Table~\ref{table:recoverfGg}]
Let $G$ denote the distribution function of $1/(2R)$. If the density $g$ of $G$ exists, then the density $k$ of $2R$ is given by $k(s)=g(1/s)/s^2$. In what follows, we show how to recover $G$, its density $g=G'$ and the shape function $f$:\\
In case $d=1$ we refer to \cite[(18)]{gneiting_99} for the recovery of $G$ and $g=G'$. The recovery of $f$ follows from (\ref{eqn:MoneHone}).
In case $d=3$ the previous Lemma~\ref{lemma:recoverGandf_odddim} can be applied to $d=3$ and $\varphi=\chi$, where we abbreviate $\lambda_{\chi}(t)=2\lambda(1/t^2)=t \chi''(1/t)$. 
In case $d=2$ we  additionally assume that $\chi \in H_5$, such that 
\begin{align*}
(-1)^k \frac{\D^k}{\D t^k}\left[-\chi'\left(\sqrt{t}\right)\right]
\end{align*}
exists for $k\in \{0,1,2\}$ and is non-negative, non-increasing and convex for $k\in\{0,1\}$ (\cf \cite[\page 96]{gneiting_99}). This requirement ensures that we can apply the monotone convergence theorem iteratively when differentiating within the following integral (\ref{eqn:muderivative}). A priori we know from \cite[Theorem~3.4]{gneiting_99} that 
\begin{align*}
G(r)=\frac{1}{2} \int_{(0,r)} \frac{1}{s} \,\D \mu(s^2) 
\qquad \text{ with } \qquad
\mu(t)=\frac{\D}{\D t} \int_{0}^t \sqrt{\frac{v}{t-v}} \left[-\chi'(1/\sqrt{v})\right] \, \D v.
\end{align*}
Now $\chi \in H_5$ ensures that $\mu'(t)$ exists by
\begin{align}
\notag \mu'(t) 
&=\frac{\D^2}{\D t^2} \int_{0}^t \sqrt{\frac{v}{t-v}} \left[-\chi'(1/\sqrt{v})\right] \, \D v 
=\frac{\D^2}{\D t^2} \left(t \int_{0}^1 \sqrt{\frac{w}{1-w}} \left[-\chi'(1/\sqrt{wt})\right] \, \D w\right) \\
\label{eqn:muderivative}
&=\int_{0}^1 \sqrt{\frac{w}{1-w}} \left(\frac{\D^2}{\D t^2} \left[- t \, \chi'(1/\sqrt{wt})\right] \right) \, \D w,
\end{align}
where 
\begin{align*}
\frac{\D^2}{\D t^2} \left[- t \, \chi'(1/\sqrt{wt})\right] 
= \frac{1}{4 \, t \, \sqrt{wt}} \left[ \chi''\left(\frac{1}{\sqrt{wt}}\right) - \frac{1}{\sqrt{wt}} \chi'''\left(\frac{1}{\sqrt{wt}}\right) \right].
\end{align*}
The substitutions $v=wt$ and $v=u^2$ give
\begin{align*}
\mu'(t) 
&=\frac{1}{2\,t^2}\int_{0}^{\sqrt{t}}  \sqrt{\frac{u^2}{t-u^2}}  \, \D \lambda_{\chi}(u) \qquad \text{ with } \qquad \lambda_{\chi}(u)=u \chi''(1/u).
\end{align*}
Hence $G$ has a density $g$ with 
\begin{align*}
g(s)=\mu'(s^2)
=\frac{1}{2\,s^4}\int_{0}^{s}  \sqrt{\frac{1}{(s/u)^2-1}}  \, \D \lambda_{\chi}(u).
\end{align*}
Fubini's theorem and the substitution $s=1/t$ yield
\begin{align*}
&G(r)=\int_0^r g(s) \, \D s 
= \int_0^r \frac{1}{2\,s^4}\int_{0}^{s}  \sqrt{\frac{1}{(s/u)^2-1}}  \, \D \lambda_{\chi}(u) \, \D s\\
& = \frac{1}{2} \int_0^r \left( \int_{u}^{r} \frac{1}{\,s^4} {\frac{1}{\sqrt{s^2-u^2}}}  \, \D s\right) \, u \, \D \lambda_{\chi}(u)
= \frac{1}{2} \int_0^r \left( \int_{1/r}^{1/u}  {\frac{t^3}{\sqrt{1-t^2u^2}}}  \, \D t\right) \, u \, \D \lambda_{\chi}(u).
\end{align*}
Applying \cite[\page 96 2.264.4]{gradshteynryzhik_07} we arrive at
\begin{align*}
G(r)
&=\frac{1}{2} \int_0^r \left( \frac{1}{3u^2r^2} + \frac{2}{3u^4} \right) \left(\sqrt{1-\frac{u^2}{r^2}}\right) \, u \, \D \lambda_{\chi}(u) \\
&=\frac{1}{6\,r^3} \int_0^r \left( 1 + 2 \left(\frac{r}{u}\right)^2 \right)  \left( \sqrt{ \left( \frac{r}{u} \right)^2  -1} \right)  \, \D \lambda_{\chi}(u).
\end{align*}
To compute the shape function $f$ we apply ({\ref{eqn:fandH}}) with $G(s)=H(2s)$ 
\begin{align*}
\frac{\pi}{4} f\left(\frac{1}{2u}\right)
= \int_0^{u} s^2  g(s) \, \D s
= \frac{1}{2} \int_0^{u} \frac{1}{\,s^2}\int_{0}^{s}  \sqrt{\frac{1}{(s/t)^2-1}}  \, \D \lambda_{\chi}(t) \, \D s.
\end{align*}
By Fubini's theorem and the substitution $s=1/r$ we have
\begin{align*}
\frac{\pi}{4} f\left(\frac{1}{2u}\right)
&= \frac{1}{2} \int_0^{u} \left( \int_{t}^{u}  \frac{1}{\,s^2} {\frac{1}{\sqrt{s^2-t^2}}}  \, \D s \right) \, t \,  \D \lambda_{\chi}(t)\\
&= \frac{1}{2} \int_0^{u} \left( \int_{1/u}^{1/t}  {\frac{r}{\sqrt{1-r^2t^2}}}  \, \D r \right) \, t \,  \D \lambda_{\chi}(t).
\end{align*}
Applying \cite[\page 96 2.264.2]{gradshteynryzhik_07} gives
\begin{align*}
\frac{\pi}{4} f\left(\frac{1}{2u}\right)
= \frac{1}{2} \int_0^{u} \left(  \frac{\sqrt{u^2 - t^2}}{u \,t^2} \right) \, t \,  \D \lambda_{\chi}(t)
= \frac{1}{2\,u} \int_0^{u} \left( \sqrt{ \left(\frac{u}{t}\right)^2 - 1}  \right)  \,  \D \lambda_{\chi}(t).
\end{align*}
Finally, we replace $u$ by $1/(2u)$ and obtain
\begin{align*}
f(u) = \frac{4 \, u}{\pi} \int_0^{1/(2u)} \left( \sqrt{ \left(\frac{1}{2ut}\right)^2 - 1}  \right)  \,  \D \lambda_{\chi}(t)
\end{align*}
as desired.
\end{proof}
}

\begin{lemma}\label{lemma:erfcdensityfromLaplace}
Let $g(s)=\sqrt{\pi} \, f(s^2)$ be a probability density on $(0,\infty)$ and let \linebreak $\varphi:[0,\infty) \rightarrow [0,1]$ with $\varphi(0)=1$ be such that $-\varphi'\left(\sqrt{\cdot}\right)$ is the Laplace transform of $f$ in the following sense
\begin{align*}
-\varphi'\left(\sqrt{t}\right)=\int_0^\infty e^{-rt}f(r)\,\D r.
\end{align*} 
Then 
\begin{align*}
\varphi\left({t}\right)=\int_0^\infty \erfc\left({st}\right)g(s)\,\D s.
\end{align*} 
\end{lemma}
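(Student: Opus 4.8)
The plan is to prove the identity by showing that both sides agree at $t=0$ and have equal derivatives for $t>0$; the fundamental theorem of calculus then forces them to coincide. Write $\Phi(t):=\int_0^\infty \erfc(st)\,g(s)\,\D s$ for the right-hand side. Since $\erfc(0)=1$ and $g$ is a probability density on $(0,\infty)$, we immediately get $\Phi(0)=\int_0^\infty g(s)\,\D s=1=\varphi(0)$, so the two functions already match at the origin. (In passing, $\erfc(st)\in[0,1]$ shows $\Phi$ is well defined and bounded by $1$.)

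Next I would differentiate $\Phi$ under the integral sign. Using $\frac{\D}{\D x}\erfc(x)=-\frac{2}{\sqrt{\pi}}e^{-x^2}$ yields
\[
\Phi'(t)=-\frac{2}{\sqrt{\pi}}\int_0^\infty s\,e^{-s^2t^2}\,g(s)\,\D s.
\]
Substituting the hypothesis $g(s)=\sqrt{\pi}\,f(s^2)$ cancels the factor $\sqrt{\pi}$, and the change of variables $r=s^2$ (so that $s\,\D s=\tfrac12\,\D r$) turns the integral into
\[
\Phi'(t)=-\int_0^\infty e^{-rt^2}\,f(r)\,\D r.
\]
Now I invoke the defining relation $-\varphi'(\sqrt{t})=\int_0^\infty e^{-rt}f(r)\,\D r$ with $t$ replaced by $t^2$, which reads $\int_0^\infty e^{-rt^2}f(r)\,\D r=-\varphi'(t)$ for $t\ge 0$. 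Comparing with the previous display gives $\Phi'(t)=\varphi'(t)$ for all $t>0$. Together with $\Phi(0)=\varphi(0)$ this yields $\Phi\equiv\varphi$, which is the assertion.

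The only step requiring genuine care is the interchange of differentiation and integration in the passage to $\Phi'$. I would justify it by dominated convergence: for $t$ ranging over a compact subinterval of $(0,\infty)$ the integrand $s\,e^{-s^2t^2}g(s)$ admits an integrable majorant, the Gaussian factor controlling the growth in $s$ while $g$ contributes total mass one, so the difference quotients converge under the integral. The finiteness of the resulting integral $\int_0^\infty e^{-rt^2}f(r)\,\D r$ is then guaranteed a posteriori, since it equals the finite quantity $-\varphi'(t)$ furnished by the Laplace-transform hypothesis. All remaining manipulations are a routine substitution, so I expect this interchange to be the sole nontrivial point.
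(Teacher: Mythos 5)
Your proof is correct and rests on exactly the same computation as the paper's: the substitution $t\mapsto t^2$, $r\mapsto s^2$ together with recognizing $\frac{\D}{\D t}\erfc(st)=-\tfrac{2s}{\sqrt{\pi}}e^{-s^2t^2}$. The only (immaterial) difference is direction: you differentiate the candidate $\Phi$ under the integral (justified by dominated convergence) and match derivatives plus the value at $0$, whereas the paper integrates the transformed hypothesis in $t$ and invokes Fubini--Tonelli, where nonnegativity of the integrand makes the interchange automatic.
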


\begin{proof}(analogously to \cite[\page 104]{gneiting_99})
Replacing $t$ by $t^2$ and $r$ by $s^2$ yields
\begin{align*}
-\varphi'\left({t}\right)=\int_0^\infty 2s e^{-s^2t^2}f(s^2)\,\D s
= \int_0^\infty \frac{\D}{\D t} \left[-\erfc(st)\right] g(s)\,\D s.
\end{align*}
Applying Fubini's theorem when integrating \wrt $t$ gives 
\begin{align*}
\varphi(0)-\varphi(t)=\int_0^\infty \left[\erfc(0)-\erfc(st)\right] g(s) \D s,
\end{align*}
which entails the claim, since $g$ is a density on $(0,\infty)$ and $\varphi(0)=1$.
\end{proof}

\begin{proof}[Proof of Table~\ref{table:erfcmix}]
We apply Lemma~\ref{lemma:erfcdensityfromLaplace} and derive this table from known Laplace transforms in \cite{polyaninmanzhirov_08} using (in this order) equations [\page 964 5.3 (11)], [\page 964 5.3 (12), \page 963 5.2 (12) and \page 962 5.1 (26)], [\page 963 5.3 (1)] and [\page 963 5.3. (3) with $\nu=1.5$] therein.
\end{proof}

\noindent{\bf Acknowledgment}
Financial support for K. Strokorb by the German Research Foundation DFG through the Research Training Group 1023 and for M. Schlather by Volkswagen Stiftung within the 'WEX-MOP' project is gratefully acknowledged.

{\scriptsize
\bibliographystyle{hplain}
\bibliography{sbs_literature}
}


\end{document}